\documentclass[11pt,reqno]{amsart}
\usepackage{xifthen,setspace}
\usepackage{bbm}
\usepackage{esvect}
\usepackage{amsmath} % provides numberwithin \left(and lots more\right)
\usepackage{style}
\usepackage{mathtools}
\usepackage[numbers, comma, sort]{natbib}
\newtheorem{assumption}{Assumption}
\newtheorem{example}{Example}
\newtheorem{prop}{Proposition}

\newcommand{\op}{\mathrm{op}}

\newcommand{\p}{{\P}}
\newcommand{\e}{{\E}}

\newcommand{\bs}{{\boldsymbol{\sigma}}}
\newcommand{\bldf}{{\bm f}}

\newcommand{\bt}{{\boldsymbol{\tau}}}

\DeclareMathOperator{\dett}{det_2}
\title[Scaling Limits for Ising Models on Inhomogeneous Random Graphs and Applications]{Scaling Limits for Ising Models on Inhomogeneous Random Graphs and Applications}

\author[Bhowal]{Sanchayan Bhowal}
\address{Department of Statistics, Stanford University. {\tt sbhowal@stanford.edu}}

\author[Chatterjee]{Anirban Chatterjee}
\address{Department of Mathematics and Statistics, Boston University. {\tt achatter@bu.edu}}

\author[Mukherjee]{Somabha Mukherjee} 
\address{Department of Statistics and Data Science, National University of Singapore, Singapore. {\tt somabha@nus.edu.sg}}

\begin{document}
\begin{abstract}
In this paper, we derive quenched scaling limits for linear functionals and the empirical spin field of Ising models on inhomogeneous random graphs generated by a graphon (encompassing both dense and sparse graphs), in the high-temperature regime. We first prove a joint central limit theorem (CLT) for finite collections of linear statistics of the spin configurations, where the limiting covariance is characterized by the resolvent of the associated graphon integral operator. Building on this result, we establish functional CLTs for the average magnetization and for the spin field indexed by suitable classes of regular test functions. We further prove convergence of the full empirical spin field, viewed as a random generalized function in negative Sobolev spaces. These scaling limits provide applications to both Bayesian neural networks and causal inference. Specifically, for the former, we derive infinite-width Gaussian-process limits for two-layer Bayesian neural networks with Ising-dependent output-layer signs, while for the latter, we establish the asymptotic normality of H\'{a}jek estimators for average treatment effects under network interference.
\end{abstract}
\maketitle

\section{Introduction}

Dependence in network-indexed spin systems is shaped jointly by the
strength of interaction and the geometry of the underlying network.
The Ising model provides the canonical probabilistic framework for
studying this interplay. Introduced in statistical physics to model ferromagnetism \cite{ising1925}, the Ising model has since become a central object in probability, statistics, machine learning, computer vision, spatial modeling, and
network science \cite{montanari,morningstar2018deep,geman,sudipto,hopfield}. Given a graphical network, the model assigns spins $\sigma_i\in\{-1,1\}$ to the vertices of the graph, and encodes dependence through pairwise interactions along the underlying network. Formally, given a symmetric interaction matrix
$J_N\coloneq ((J_N(i,j)))_{1\leq i,j\leq N}$, the Ising model with inverse-temperature parameter $\beta\geq 0$ is the probability measure on the hypercube $\{-1,1\}^N$, given by:
\begin{equation}
    \mathbb P_{\beta}(\bs)
    =
    \frac{1}{2^N Z_N(\beta)}
    \exp\{-\beta H_N(\bs)\},\qquad \bs \in \{-1,+1\}^N
    \label{model_def_general}
\end{equation}
where
   \(H_N(\bs)
    =
    -\sum_{1\leq i,j\leq N}
    J_N(i,j)\sigma_i\sigma_j\)
is the Hamiltonian and
\[
    Z_N(\beta)
    =
    \frac{1}{2^N}
    \sum_{\bs\in\{-1,+1\}^N}
    \exp\{-\beta H_N(\bs)\}
\]
is the partition function. The parameter $\beta$ controls the strength of alignment among interacting spins. In network applications, $J_N$ is typically chosen as a suitably normalized adjacency matrix, so that the quadratic interaction term aggregates the pairwise dependence carried by the network.

Fluctuations of magnetization and partition functions for Ising models on random graphs have been studied extensively. A popular and tractable model is the Curie--Weiss model (the Ising model on the complete graph), for which phase transitions and fluctuations of the magnetization have been known at all temperatures since the classical works of \cite{ellis1985entropy,ellisnewmanrosen1980}. For Ising models on Erd\H{o}s--R\'enyi graphs, Bovier and Gayrard \cite{BovierGayrard1993} identified the phase transition and established concentration of the magnetization. Kabluchko, L\"owe and Schubert proved central limit theorems for the magnetization in dense and sparse Erd\H{o}s--R\'enyi regimes \cite{kabluchko2019fluctuations,kabluchko2020fluctuations}, with further results in low-temperature and external-field regimes \cite{KabluchkoLoeweSchubert2022}. Fluctuations of the partition function in the high-temperature Erd\H{o}s--R\'enyi setting were obtained in \cite{kabluchko2021fluctuations}. These lines of work were subsequently extended to bounded-degree regimes in \cite{coja2026fluctuations,prodromidis2026distribution}. For approximately regular graphs, \cite{debmukh} established universality results for fluctuations of the average magnetization. Related results on random-field Ising models and applications to high-dimensional Bayesian linear regression can be found in \cite{lee2025fluctuations,lee2025clt}.

The present paper takes a broader field-level perspective, viewing the spin configuration as a random field and incorporating arbitrary network interactions specified by a graphon model, in both sparse and dense regimes.
More precisely, consider the following empirical signed measure associated with $\bs$:
\begin{equation}\label{d:etaNprel}
    \eta_N := \frac{1}{\sqrt{N}}\sum_{i=1}^N \sigma _i \delta_{i/N},
\end{equation}
 which we will refer to as the empirical spin field. For a test function $f:[0,1]\to\mathbb R$, define the
following linear statistic: 
\begin{equation}
    \sigma_N(f)
    \coloneq \int f \,\mathrm d \eta_N =
    \frac{1}{\sqrt N}
    \sum_{i=1}^N
    f\left(\frac{i}{N}\right)\sigma_i .
    \label{eq:intro_linear_stat}
\end{equation}
The choice $f\equiv 1$ gives the normalized magnetization, while indicators of intervals yield partial sums. General test functions give smeared observables of the spin field, which are natural in graphon-based models because the index $\frac{i}{N}$ represents the latent location of vertex $i$. Starting from a quenched multivariate central limit theorem for these
macroscopic observables, we obtain two complementary
infinite-dimensional extensions: a Donsker-type limit for the
partial-sum process and a Gaussian limit for $\eta_N$
in negative Sobolev spaces. We also establish function-indexed weak
convergence of the process \eqref{eq:intro_linear_stat} over compact classes of regular test functions. Taken together, these results give a unified field-level characterization of Gaussian fluctuations in heterogeneous network Ising models.

\subsection{Ising Models on Inhomogeneous Random Graphs}
\label{sec:isingW}

Inhomogeneous random graphs offer a versatile framework for encoding latent heterogeneity across network nodes
\cite{bickel2009nonparametric,bickel2011method,crane2018probabilistic,bollobas2007phase,hoff2002latent}. A standard construction of such random graphs is provided by a graphon, which is a symmetric measurable function
$W:[0,1]^2\to[0,1]$ \cite{Borgs2008,lovasz2012large}.
Given a sparsity parameter $\theta_N$, the associated sparse graphon model is defined as follows.

\begin{defn}[Sparse graphon model]
Fix a sparsity parameter $\theta_N\in(0,1]$ and a graphon $W$ such that
$\int_{[0,1]^2}W(x,y)\,\mathrm dx\,\mathrm dy>0$. The $W$-random graph with sparsity $\theta_N$, denoted by $G(N,\theta_N,W)$, is the random graph on the vertex set
$[N]\coloneq \{1,2,\ldots,N\}$ in which the vertices $i$ and $j$ are joined with probability
$
\theta_N
W(\frac{i}{N},\frac{j}{N}),
$
independently over all $1\leq i\leq j\leq N$.
\label{defn:W}
\end{defn}

This framework covers a broad range of familiar network models. In particular, it reduces to the Erd\H{o}s--R\'enyi random graph when $W\equiv 1$ and to a stochastic block model when $W$ is piecewise constant on a block partition \cite{bickel2009nonparametric,holland1983stochastic}; it also covers various other latent-variable network models. Sparse graphon models encompass many standard latent-variable network
models and are widely used in community detection, subgraph analysis,
and nonparametric network estimation.
 They also provide a natural class of random interaction structures for Markov random fields \cite{Borgs2012,basak2017universality,bhattacharya2018inference}.

Let $G_N\sim G(N,\theta_N,W)$ and let
$A_N=((A_N(i,j)))_{1\leq i,j\leq N}$ denote its adjacency matrix. Conditional on $G_N$, we define the Ising model by taking the interaction matrix in \eqref{model_def_general} to be
\(
    J_N
    =
    \frac{1}{2N\theta_N}A_N.
\)
Equivalently, the Hamiltonian is
\begin{equation}
    H_N(\bs)
    \coloneq
    -\frac{1}{2N\theta_N}
    \sum_{1\leq i,j\leq N}
    A_N(i,j)\sigma_i\sigma_j ,
    \label{eq:sufficientstatistics}
\end{equation}
where the random variables $\{A_N(i,j)\}_{1\leq i\leq j\leq N}$ are independent Bernoulli variables with parameters
\(
    \{
    \theta_N
    W(\frac{i}{N},\frac{j}{N})
    \}_{1\leq i\leq j\leq N}.
\)
The corresponding conditional Gibbs measure is then given by
\begin{equation}
    \P_{\beta,\theta_N,W}(\bs)
    =
    \frac{\exp\{-\beta H_N(\bs)\}}
    {2^N Z_N(\beta,\theta_N,W)},
    \qquad
    \bs\in\{-1,+1\}^N,
    \label{model_def}
\end{equation}
where
\[
    Z_N(\beta,\theta_N,W)
    =
    \frac{1}{2^N}
    \sum_{\bs\in\{-1,+1\}^N}
    \exp\{-\beta H_N(\bs)\}.
\]
The normalization by $N\theta_N$ in \eqref{eq:sufficientstatistics} places the energy on the mean-field scale: when the expected degree diverges, the aggregate contribution incident to a typical vertex remains of constant order.

Let $T_W:L^2[0,1]\to L^2[0,1]$ denote the integral operator associated with the graphon, i.e.
\[
    (T_W f)(x)
    =
    \int_0^1
    W(x,y)f(y)\,\mathrm dy,
\]
and let $\|W\|_{\mathrm{op}}\coloneq \|T_W\|_{\mathrm{op}}$. $T_W$ being a self-adjoint, compact operator, admits a countable spectrum $\{\lambda_i(W)\}_{i\ge 1}$. We will denote by $\{\phi_i\}_{i\ge 1}$ an orthonormal eigenbasis of $T_W$. We work in the high-temperature regime of the model \eqref{model_def}, defined by the condition
\begin{equation}\label{eq:subcriticdef8}
  \beta\|W\|_{\mathrm{op}}<1 .  
\end{equation}
In this regime, the operator $I-\beta T_W$ is invertible, and the resolvent $R_\beta \coloneq (I-\beta T_W)^{-1}$ governs the limiting covariance structure of the spin field. 

\subsection{Summary of Results}
\label{sec:results}

We now describe the main results of the paper. All convergence statements below are quenched: the relevant
conditional laws given the random graph $A_N$ converge weakly in
probability with respect to the graph randomness. The corresponding
unconditional limits follow as consequences.

\begin{itemize}

    \item \textit{Multivariate CLT for macroscopic linear statistics.}
    Our first result (Theorem \ref{thm:isingWeakConv}) is a multivariate central limit theorem for the linear statistics \eqref{eq:intro_linear_stat}. Under the sparsity condition
    $\theta_N=\Omega(N^{-2/3})$, for Riemann integrable test functions $f_1,\ldots,f_k:[0,1]\to\mathbb R$, we prove that
    \[
        \big(\sigma_N(f_1),\ldots,\sigma_N(f_k)\big)
        \xrightarrow{d}
        \mathcal N_k(\boldsymbol 0,\Sigma),
    \]
    where
    \[
        \Sigma_{ij}
        =
        \left\langle
        f_i,(I-\beta T_W)^{-1}f_j
        \right\rangle .
    \]
    Note that when $\beta=0$, the covariance reduces to $\langle f_i,f_j\rangle$, corresponding to independent Rademacher spins. For $\beta>0$, the joint fluctuations of arbitrary macroscopic observables retain the full operator structure of the graphon through its resolvent $(I-\beta T_W)^{-1}$.
 \item \textit{Scaling limit for the partial sum process.}
    We next consider the partial-sum process
    \[
        S_N(t)
        =
        \frac{1}{\sqrt N}
        \sum_{i=1}^{\lfloor Nt\rfloor}
        \sigma_i,
        \qquad 0\leq t\leq 1.
    \]
    We prove in Theorem \ref{thm:donsker}, convergence of $S_N$ in the Skorokhod topology to a centered Gaussian process $X$ with covariance
    \[
        \mathbb E[X(s)X(t)]
        =
        \left\langle
        \mathbf 1_{[0,s]},
        (I-\beta T_W)^{-1}\mathbf 1_{[0,t]}
        \right\rangle .
    \]
    Equivalently, if $\{\lambda_i,\phi_i\}_{i\geq 1}$ denotes the spectrum and an orthonormal eigenbasis of $T_W$, and
    \[
        \Phi_i(t)
        =
        \int_0^t
        \phi_i(x)\,\mathrm dx,
    \]
    then the limiting process admits the representation
    \[
        X(t)
        =
        B(t)
        +
        \sum_{i=1}^{\infty}
        \left(
        \frac{1}{\sqrt{1-\beta\lambda_i}}-1
        \right)
        \Phi_i(t)Z_i,
    \]
    where $B$ denotes the standard Brownian motion and
    $Z_i=\int_0^1\phi_i(t)\,\mathrm dB(t)$. Thus, the classical Brownian scaling limit arising in the independent-spin case
is modified by a spectral correction determined by the inhomogeneous
interaction kernel.

            \item \textit{Function-indexed CLT for linear statistics.}
            Next, we pass from finitely many test functions
to compact classes of absolutely continuous functions. Specifically, we establish in Theorem \ref{thm:process_class}, weak convergence of the function-indexed process
    \[
        \mathbb G_N
        =
        \bigl(\sigma_N(f)\bigr)_{f\in\mathcal C},
    \]
    where $\mathcal C$ is a compact class of absolutely continuous test functions, to a centered Gaussian process $\mathbb G$ given by
    \[
        \mathbb G(f)
        =
        f(1)X(1)
        -
        \int_0^1
        X(t)f'(t)\,\mathrm dt,
        \qquad f\in\mathcal C.
    \]

        \item \textit{Gaussian limit of the empirical spin field.}
    We next consider the empirical spin field $\eta_N$ defined in \eqref{d:etaNprel}, which we canonically identify with a random
    element of the negative Sobolev space $H^{-s}(0,1)$ (see Section \ref{sobolevcan8} for the definition), for every $s>1/2$. We prove in Theorem \ref{thm:sob_conv} that
    \[
        \eta_N
        \xrightarrow{d}
        \eta
        \qquad\text{in }H^{-s}(0,1),
    \]
    where the limiting centered Gaussian random element admits the
    representation
    \[
        \eta
        =
        \sum_{k=0}^{\infty}
        \chi(e_k)e_k,
    \]
    with $\{e_k\}_{k\geq0}$ being the cosine orthonormal basis of
    $L^2[0,1]$, and $\chi$ being the centered Gaussian linear process with
    covariance
    \[
        \mathbb E[\chi(f)\chi(g)]
        =
        \left\langle
        f,(I-\beta T_W)^{-1}g
        \right\rangle .
    \]
    Further, viewing these fields as
    $H^{-1}(0,1)$-valued random elements, we establish weak convergence
    of their duality actions over compact classes of
    $H^1(0,1)$ test functions.

    \item \textit{Applications to Bayesian neural networks and causal inference.}
    Our results have natural applications in Bayesian neural networks and causal inference.
    First, we consider two-layer Bayesian neural-network priors in
    which the output-layer signs are sampled from the Ising measure
    rather than independently. We prove in Theorem \ref{thm:ising-neural-network} an infinite-width finite-dimensional Gaussian-process limit and identify the explicit rank-one correction to the classical neural-network Gaussian-process kernel induced by Ising dependence among the output-layer signs \cite{neal2012bayesian}. Second, we interpret the spins as
    network-dependent treatment assignments in a causal model with
    outcome interference through the fraction of treated peers. In Theorem \ref{thm:hajek-graphon-ising}, we establish asymptotic normality of the H\'ajek estimator for the conditional direct average treatment effect on a complete outcome-interference network, when the treatment assignments are generated according to an inhomogeneous Ising model.
\end{itemize}

We now briefly indicate the main ideas behind the proofs. The finite-dimensional fluctuation theory is driven by a rank-one
perturbation analysis of the partition function. The conditional
moment-generating function of $\sigma_N(f)^2$ is expressed as a ratio
of random partition functions, which are shown to concentrate around
their annealed counterparts. A 2-modified Fredholm determinant identity
then yields the resolvent covariance
$\langle f,R_\beta f\rangle$. The Donsker-type limit is obtained by
combining quenched finite-dimensional convergence with ferromagnetic
moment bounds, while the Sobolev-field limit follows from convergence of finite-dimensional projections together with a uniform control of the tail.

\subsection{Asymptotic Notation}
\label{sec:aNbN}

Throughout the paper, we use the following asymptotic notations. For two nonnegative sequences $a_N$ and $b_N$, we write $a_N\lesssim b_N$ if there exists a constant $C<\infty$ such that $a_N\leq Cb_N$ for all sufficiently large $N$. We write $a_N\gtrsim b_N$ if $b_N\lesssim a_N$. 

\subsection{Preliminaries}
In this section, we collect the notation and operator-theoretic preliminaries used throughout the paper. Although some of this notation has been introduced informally above, we provide formal definitions here for completeness.

To begin with, define the graphon integral operator $T_W:L^2[0,1]\to L^2[0,1]$ by
\begin{equation}
    \label{eq:defT_W}
    (T_W f)(x) = \int_0^1 W(x,y)f(y)~dy~.
\end{equation}
Note that $T_W$ is a self-adjoint, compact operator. Therefore, $T_W$ has a discrete spectrum $\{\lambda_i(W)\}_{i \in \N}$, where $\N$ denotes the set of natural numbers. Moreover, $\sum_i \lambda_i(W)^2$ is finite, and in fact equals $\int_{[0,1]^2} W^2(x,y)~dx dy$. Thus, $T_W$ is also a Hilbert-Schmidt operator. Let $\{\phi_i\}_{i\in \N}$ be an orthonormal eigenbasis of $T_W$.
\begin{defn}
Let $H$ be a Hilbert space, and let
$\{s_j(T)\}_{j\geq1}$ denote the singular values of a compact
operator $T:H\to H$, that is, the eigenvalues of
$(T^*T)^{1/2}$, counted with multiplicity. The operator $T$ is
called \emph{trace--class} if
\[
    \sum_{j\geq1}s_j(T)<\infty,
\]
and \emph{Hilbert--Schmidt} if
\[
    \sum_{j\geq1}s_j(T)^2<\infty.
\]
If $T$ is self-adjoint, then its singular values are the absolute
values of its eigenvalues $\{\lambda_j(T)\}_{j\ge 1}$, and hence these conditions are equivalent
to
\[
    \sum_{j\geq1}|\lambda_j(T)|<\infty
    \qquad\text{and}\qquad
    \sum_{j\geq1}\lambda_j(T)^2<\infty,
\]
respectively.
\end{defn}
\begin{defn}
    Let $T$ be a trace--class operator with eigenvalues $\lambda_i$, counted with algebraic multiplicity. Then the \emph{Fredholm determinant} of $I+T$ ($I$ being the identity operator) is defined as,
    \begin{equation*}
        \det(I+T) = \prod_i(1+\lambda_i)
    \end{equation*}
\end{defn}
Note that an arbitrary Hilbert--Schmidt operator may not be trace--class, and so its Fredholm determinant may not exist. For such operators, we define a different type of determinant:
\begin{defn}
    Let $T$ be a Hilbert--Schmidt operator with eigenvalues $\lambda_i$,
counted with algebraic multiplicity. Then the \emph{2-modified Fredholm determinant} of $I+T$ is defined as
    \begin{equation*}
        \dett(I+T) = \prod_i e^{-\lambda_i}(1+\lambda_i)
    \end{equation*}
\end{defn}
Hence, for a trace--class operator $T$, $\dett(I+T)=\det(I+T)e^{-\tr T}$.

\begin{defn}
    Let us consider $f,g \in L^2[0,1]$. Define the following \emph{rank-one operator} $\opb{g}{f}:L^2[0,1] \to L^2[0,1]$ as,
    \begin{equation*}
        \opb{g}{f}h=\ipb{f}{h} g,
    \end{equation*}
    where $\ipb{f}{h} \coloneqq \int_0^1 f(x) h(x)~dx$.
\end{defn}
For a more detailed background on these concepts, see
\cite{simon-2010}. The central objects of this paper are the following linear statistic of the spins and the empirical spin field, defined below.

\begin{defn}
We define the empirical spin field as the following empirical signed measure:
$$\eta_N := \frac{1}{\sqrt{N}}\sum_{i=1}^N \sigma _i \delta_{i/N}.$$
    Further, for any function $f: [0,1]\to \R$, we define:
    $$\sigma_N(f) \coloneqq \int f \,\mathrm d \eta_N= \frac{1}{\sqrt{N}}\sum_{i=1}^N f\left(\frac{i}{N}\right)\sigma_i.$$
\end{defn}
 In this notation, the partial-sum process is defined as:
\[
    S_N(t)
    \coloneq
    \sigma_N\bigl(\mathbf 1_{[0,t]}\bigr)
    =
    \frac{1}{\sqrt N}
    \sum_{i=1}^{\lfloor Nt\rfloor}
    \sigma_i,
    \qquad 0\leq t\leq 1.
\]

Our main results establish weak convergence in probability for the conditional laws, given \(A_N\), of \(\sigma_N(f)\) and \(S_N(t)\), at different levels: one-dimensional marginals, joint finite-dimensional distributions, and the full process. To this end, we give the following definition.

\begin{defn}
    Let $\mathcal{S}$ be a Polish space and $\mathcal{P}(\mathcal S)$ denote the set of all probability measures on the Borel $\sigma$-field on $\mathcal S$, equipped with the topology $\Gamma$ of weak convergence. Let $\{\mu_n\}_{n\ge 1}$ be a family of $\mathcal P(\mathcal S)$-valued random probability measures defined on a common probability space $(\Omega,\mathscr{F},\P)$ (i.e. each $\mu_n$: $(\Omega,\mathscr{F}) \to (\mathcal P(\mathcal S), \mathcal{B}\bigl(\mathcal{P}(\mathcal S),\Gamma\bigr))$ is a measurable function, where $\mathcal{B}\bigl(\mathcal{P}(\mathcal S),\Gamma\bigr)$ denotes the Borel $\sigma$-field on $\mathcal P(\mathcal{S})$ generated by $\Gamma$). We say that $\mu_n$ converges weakly in probability to a measure $\mu \in  \mathcal P(\mathcal S)$, if for all $U \in \Gamma$ containing $\mu$, we have $$\P(\mu_n \in U) \rightarrow 1.$$ 
\end{defn}

Since \(\mathcal S\) is Polish and the weak topology $\Gamma$ is metrizable, the same definition is equivalent to saying that
\(
    d(\mu_n,\mu)\xrightarrow{P}0
\)
for any metric \(d\) metrizing weak convergence on $\mathcal P(\mathcal S)$.

Finally, throughout the paper, we work under some standing assumptions, which we list below. 

\begin{assumption}\label{assumption12} Throughout, we impose the following assumptions on $\beta$, $W$, and
$\theta_N$.

\begin{enumerate} 

\item  $\beta \in (0, \frac{1}{\|W\|_{\mathrm{op}}})$, where $\|W\|_{\mathrm{op}}$ is the operator norm of the graphon $W$. 

\item The graphon $W: [0, 1]^2 \rightarrow [0, 1]$ and its diagonal map $\mathrm{diag}_W : [0, 1] \rightarrow [0, 1]$, defined as $\mathrm{diag}_W(x) \coloneq  W(x, x)$, are both Riemann integrable. Moreover, $\int_{[0, 1]^2} W(x, y) \mathrm d x \mathrm d y > 0$. 
 
\item $N^{\frac{2}{3}}\theta_N \gtrsim 1$ and $\theta_N \rightarrow \theta \in [0, 1]$. 

\end{enumerate} 
\end{assumption}

\section{Main Results}
In this section, we present our main results on the asymptotic fluctuations of linear functionals in the Ising model \eqref{model_def}.

\subsection{A Multivariate CLT for Linear Statistics of the Spin Field}
We start with a multivariate central limit theorem for linear statistics in the Ising model \eqref{model_def}.
\begin{thm}
    \label{thm:isingWeakConv}
    For Riemann integrable functions \(f_1,\ldots,f_k:[0,1]\to\mathbb R\),
    the conditional law of
    \[
        \big(\sigma_N(f_1),\ldots,\sigma_N(f_k)\big)
    \]
    given \(A_N\), viewed as a random probability measure on \(\mathbb R^k\),
    converges weakly in probability to \(\mathcal N_k(\boldsymbol 0,\Sigma)\), where
    \[
        \Sigma_{ij}
        =
        \left\langle
        f_i,(I-\beta T_W)^{-1}f_j
        \right\rangle .
    \]
    In particular,
    \[
        \big(\sigma_N(f_1),\ldots,\sigma_N(f_k)\big)
        \xrightarrow{d}
        \mathcal N_k(\boldsymbol 0,\Sigma).
    \]
\end{thm}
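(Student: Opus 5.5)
By the Cramér--Wold device it suffices to treat a single linear combination $f=\sum_j t_jf_j$, which is again Riemann integrable. For it we show that the conditional law of $\sigma_N(f)$ given $A_N$ converges weakly in probability to $\mathcal N(0,\langle f,R_\beta f\rangle)$. Indeed, $\sigma_N(\sum_j t_jf_j)=\sum_j t_j\sigma_N(f_j)$, and $\langle f,R_\beta f\rangle=t^\top\Sigma t$. The passage from one-dimensional to multivariate convergence in probability goes through a countable dense set of $t$'s and a diagonal subsequence argument, using that the metric on $\mathcal P(\mathbb R^k)$ can be controlled by characteristic functions on such a set. The unconditional statement then follows by averaging the conditional characteristic functions, which are bounded, with dominated convergence.

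To identify the limit we use the approach announced in the introduction: the moment generating function of $\sigma_N(f)^2$. Fix small $s\ge 0$, or $s$ of either sign with $|s|$ small enough that $\beta T_W+2sf\otimes f$ still has operator norm $<1$. The Hubbard--Stratonovich identity $e^{s x^2}=\E_g e^{\sqrt{2s}\,gx}$ is not needed. Instead, write
\[
\E\bigl[e^{s\sigma_N(f)^2}\mid A_N\bigr]=\frac{Z_N(\beta,\theta_N,W;\,sf)}{Z_N(\beta,\theta_N,W)},
\]
where the numerator is the partition function of the Ising model with interaction matrix $J_N+\frac{s}{\beta N}\mathbf f\mathbf f^\top$, with $\mathbf f=(f(i/N))_i$. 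This is a rank-one perturbation of the interaction. The key input is a high-temperature asymptotic for such random partition functions:
\[
Z_N \approx \bigl[\dett(I-K)\bigr]^{-1/2}\times(\text{explicit factors from diagonal and trace terms}),
\]
where $K$ is the limiting kernel operator. For the unperturbed model $K=\beta T_W$. The diagonal, $\mathrm{diag}_W$ and the sparse-variance terms produce $N$-dependent factors that are identical in numerator and denominator, except for the trace contribution of the rank-one part, $2s\langle f,f\rangle\to$ cancelled consistently by the $\dett$ convention. The concentration of random partition functions around their annealed counterparts is where $\theta_N\gtrsim N^{-2/3}$ enters. I would prove it by a second-moment or cluster-expansion comparison of $\log Z_N$ with $\log\E Z_N$, showing the difference is $o_P(1)$ and the same for both the perturbed and the unperturbed model, so that it cancels in the ratio.

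Granting this, the ratio converges in probability to
\[
\left[\frac{\dett(I-\beta T_W-2s\,f\otimes f)}{\dett(I-\beta T_W)}\right]^{-1/2}e^{-s\langle f,f\rangle}\cdot e^{s\langle f,f\rangle}
=\bigl(1-2s\langle f,R_\beta f\rangle\bigr)^{-1/2}.
\]
This uses the identity $\dett((I-A)(I-B'))$ together with $\det(I-2s\,R_\beta f\otimes f)=1-2s\langle f,R_\beta f\rangle$ for the rank-one factor; the exponential corrections of $\dett$ match the trace terms. The limit is the MGF of $Z^2$ with $Z\sim\mathcal N(0,\langle f,R_\beta f\rangle)$. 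Convergence of the MGF of $\sigma_N(f)^2$ on a neighbourhood of $0$ gives, in probability, weak convergence of the conditional law of $\sigma_N(f)^2$ together with uniform integrability of its moments. Since the Gibbs measure is invariant under $\bs\mapsto-\bs$, the conditional law of $\sigma_N(f)$ is symmetric, and the law of $|\sigma_N(f)|$ determines it. Hence $\sigma_N(f)\to\mathcal N(0,\langle f,R_\beta f\rangle)$ conditionally in probability. To make the ``in probability'' rigorous, we apply the argument along a countable set of $s$ and use subsequences along which the convergence is almost sure.

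The main obstacle is the partition function asymptotics. One must show that, for interaction $\frac{1}{2N\theta_N}A_N$ plus a bounded-rank perturbation, $\log Z_N$ equals a deterministic quantity built from $\dett$ of the discretized kernel plus $o_P(1)$. One must also show that the discretized $\dett$ converges to that of $T_W$ by Riemann integrability, via Hilbert--Schmidt convergence of step-kernel approximations. For the random part I would first try the annealed comparison $\E Z_N^2/(\E Z_N)^2\to C$, combined with a small-subgraph conditioning or a variance computation for centered cycle counts of $A_N-\E A_N$, to show that the fluctuation of $\log Z_N$ is asymptotically the same additive random variable in numerator and denominator. Its effect on the rank-one-perturbed ratio is then negligible; this is precisely where the $N^{-2/3}$ sparsity threshold is needed.
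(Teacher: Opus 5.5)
Your outline is correct and is essentially the paper's proof: the conditional MGF of $\sigma_N(f)^2$ as a ratio of a rank-one-perturbed to an unperturbed partition function, replacement of both by their annealed means via a second-moment bound, the $\dett$ limit combined with the rank-one determinant identity to obtain $(1-2s\langle f,R_\beta f\rangle)^{-1/2}$, then symmetry under $\bs\mapsto-\bs$, subsequence arguments and Cram\'er--Wold. (In your final display the two factors $e^{-s\langle f,f\rangle}e^{s\langle f,f\rangle}$ should be replaced by the single diagonal factor $e^{s\|f\|_2^2}$.)

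The paper handles what you call the main obstacle by dividing both partition functions by the same explicit graph factor $\exp(\gamma\xi_N+\gamma^2\zeta_N)$, where $\gamma=\beta/(2N\theta_N)$, $\xi_N=\sum_iA_N(i,i)$ and $\zeta_N=\sum_{i,j}A_N(i,j)$. After this normalization the relative variance is $O(1/(N\theta_N))$, so no small-subgraph conditioning is needed. This factor is exactly the common additive fluctuation of your fallback: its fluctuation is of order $1/(N\theta_N^{3/2})$. Consequently your first claim, $\log Z_N-\log\mathbb E Z_N=o_P(1)$, is false at the allowed boundary $\theta_N\asymp N^{-2/3}$.
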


The proof of Theorem \ref{thm:isingWeakConv} is given in Section \ref{proofthmisingWeakConv}. To illustrate the implications of Theorem \ref{thm:isingWeakConv}, we compute the limiting joint distribution of linear statistics for the Ising model \eqref{model_def} in two representative settings: first, when the underlying graph is generated by a two-block stochastic block model (SBM), and second, when it is generated by a rank-one graphon model. Together, these settings cover several widely used graphon models.

\begin{example}\label{example:block-linear-stat} (Block models)
Let the underlying graphon be the 2-block graphon
\begin{align}\label{eq:Wpq-linear-stat}
W(x, y) =
\begin{cases}
p  & \text{ for } (x, y) \in [0, \frac{1}{2}]^2
        \bigcup (\frac{1}{2}, 1]^2, \\[2mm]
q  & \text{ for } (x, y) \in \left\{[0, \frac{1}{2}] \times (\frac{1}{2}, 1]\right\}
        \bigcup \left\{(\frac{1}{2}, 1] \times [0, \frac{1}{2}]\right\},
\end{cases}
\end{align}
where $p,q\in[0,1]$ and $p+q>0$. This corresponds to the graphon generating a
balanced stochastic block model with within and between block connection
probabilities $p$ and $q$, respectively. In this case, the only possibly non-zero eigenvalues of $T_W$ are
\begin{equation}\label{eigvpm8}
    \lambda_+ = \frac{p+q}{2}
    \qquad \text{and} \qquad
    \lambda_- = \frac{p-q}{2},
\end{equation}
with corresponding orthonormal eigenfunctions
\[
    \phi_+(x)\equiv 1
    \qquad \text{and}\qquad
    \phi_-(x)=
    \mathbf 1_{[0,\frac12]}(x)-\mathbf 1_{(\frac12,1]}(x),\quad \text{respectively}.
\]
Hence, $\|W\|_{\mathrm{op}} = (p+q)/2$, so the region under consideration is $0<\beta < 2/(p+q)$. 

Let 
\(
    P_\pm h=\langle h,\phi_\pm\rangle\phi_\pm
\)
 denote the corresponding orthogonal projections. Then \(
    T_W=\lambda_+P_+ + \lambda_-P_-,
\)
and hence,
\[
    I-\beta T_W
    =
    (I-P_+-P_-)
    +(1-\beta\lambda_+)P_+
    +(1-\beta\lambda_-)P_-.
\]
Since \(P_+^2=P_+\), \(P_-^2=P_-\), and \(P_+P_-=P_-P_+=0\), it follows that
\[
    (I-\beta T_W)^{-1}
    =
    (I-P_+-P_-)
    +
    \frac{1}{1-\beta\lambda_+}P_+
    +
    \frac{1}{1-\beta\lambda_-}P_- = 
    I+
    \frac{\beta\lambda_+}{1-\beta\lambda_+}P_+
    +
    \frac{\beta\lambda_-}{1-\beta\lambda_-}P_-.
\]
Substituting the values for $\lambda_{\pm}$ from \eqref{eigvpm8}, we have:
\[
    (I-\beta T_W)^{-1}
    =
    I+
    \frac{\beta(p+q)}{2-\beta(p+q)}P_+
    +
    \frac{\beta(p-q)}{2-\beta(p-q)}P_-.
\]

Next, for Riemann integrable functions $f_1,\ldots,f_k:[0,1]\to\R$, define
\[
    m_i^+
    \coloneq
    \ipb{f_i}{\phi_+}
    =
    \int_0^1 f_i(x)\,\mathrm dx
    \qquad \text{and}\qquad
    m_i^-
    \coloneq
    \ipb{f_i}{\phi_-}
    =
    \int_0^{1/2} f_i(x)\,\mathrm dx
    -
    \int_{1/2}^1 f_i(x)\,\mathrm dx .
\]

Then, by Theorem \ref{thm:isingWeakConv}, we have:
\begin{align}\label{eq:block-linear-clt}
    \left(\sigma_N(f_1),\ldots,\sigma_N(f_k)\right)
    \xrightarrow{d}
    \cN_k\left(\boldsymbol 0,\Sigma^{\mathrm{block}}\right),
\end{align}
where
\begin{align}\label{eq:Sigma-block-linear}
    \Sigma^{\mathrm{block}}_{ij}
    =
    \ipb{f_i}{f_j}
    +
    \frac{\beta(p+q)}{2-\beta(p+q)}\,m_i^+m_j^+
    +
    \frac{\beta(p-q)}{2-\beta(p-q)}\,m_i^-m_j^- .
\end{align}

Asymptotics of the scaled magnetization $N^{-1/2}\sum_{i=1}^N \sigma_i$ can be derived by taking $k=1$ and $f_1\equiv 1$, which gives $m_1^+ = 1$ and $m_1^-=0$. Hence, 
$$\frac{1}{\sqrt{N}}\sum_{i=1}^N \sigma_i \xrightarrow{d} \cN\left(0,\frac{2}{2-\beta(p+q)}\right).$$

The following are some important special cases of this example.
\begin{itemize}

\item {\it Erd\H{o}s--R\'enyi model $G(N,\theta_N)$.}
This corresponds to $W\equiv 1$, that is, $p=q=1$, and hence, the asymptotics in this case are valid for $0<\beta<1$. The covariance in
\eqref{eq:Sigma-block-linear} thus simplifies to
\begin{align*}
    \Sigma^{\mathrm{ER}}_{ij}
    =
    \ipb{f_i}{f_j}
    +
    \frac{\beta}{1-\beta}
    \left(\int_0^1 f_i(x)\,\mathrm dx\right)
    \left(\int_0^1 f_j(x)\,\mathrm dx\right).
\end{align*}
In particular, for the scaled magnetization, we have: 
\[
    \frac{1}{\sqrt{N}}\sum_{i=1}^N \sigma_i 
    \xrightarrow{d}
    \cN\left(0,\frac{1}{1-\beta}\right),
\]
which aligns with Theorem 1.1 in \cite{kabluchko2019fluctuations}.

\item {\it Random bipartite graph.}
This corresponds to taking $p=0$ and $q=1$, and hence, the asymptotics in this case are valid for $0<\beta<2$. In this case,
\eqref{eq:Sigma-block-linear} becomes
\begin{align*}
    \Sigma^{\mathrm{RB}}_{ij}
    =
    \ipb{f_i}{f_j}
    +
    \frac{\beta}{2-\beta}\,m_i^+m_j^+
    -
    \frac{\beta}{2+\beta}\,m_i^-m_j^- .
\end{align*}
For the scaled magnetization, we have:
\[
    \frac{1}{\sqrt{N}}\sum_{i=1}^N \sigma_i 
    \xrightarrow{d}
    \cN\left(0,\frac{2}{2-\beta}\right). 
\] 
\end{itemize}
\end{example}

\begin{example}[Rank-one graphon model]\label{example:rank-one-linear-stat}
Consider the rank-one graphon
\[
    W(x,y)=g(x)g(y),
\]
where $g:[0,1]\to[0,1]$ is Riemann integrable, and not
identically zero. In this case, we have: 
\[
    T_W h = g \ipb{g}{h},
    \qquad \text{and}\qquad
    T_W^2=\mu_2T_W
    \quad\text{where}\quad
    \mu_2\coloneq \|g\|_2^2.
\]
In fact, in this case $\|W\|_{\mathrm{op}}=\mu_2$, so the region under consideration is $0<\beta< 1/\mu_2$. Also, a straightforward computation gives:
\[
    (I-\beta T_W)^{-1}
    =
    I+\frac{\beta}{1-\beta\mu_2}T_W .
\]

Now, for Riemann integrable functions $f_1,\ldots,f_k:[0,1]\to\R$, define
\[
    r_i
    \coloneq
    \ipb{f_i}{g}
    =
    \int_0^1 f_i(x)g(x)\,\mathrm dx .
\]
Then, by Theorem \ref{thm:isingWeakConv}, we have:
\begin{align}\label{eq:rank-one-linear-clt}
    \left(\sigma_N(f_1),\ldots,\sigma_N(f_k)\right)
    \xrightarrow{d}
    \cN_k\left(\boldsymbol 0,\Sigma^{\mathrm{rank}}\right),
\end{align}
where
\begin{align}\label{eq:Sigma-rank-one-linear}
    \Sigma^{\mathrm{rank}}_{ij}
    =
    \ipb{f_i}{f_j}
    +
    \frac{\beta}{1-\beta\mu_2}\,
    r_i r_j .
\end{align}

Once again, for deriving asymptotics of the scaled magnetization $N^{-1/2}\sum_{i=1}^N \sigma_i$, we take $k=1$ and $f_1\equiv 1$. This gives $r_1 = \int_0^1 g(x)\,\mathrm dx$, and hence,
$$\frac{1}{\sqrt{N}}\sum_{i=1}^N \sigma_i \xrightarrow{d} \cN\left(0,1+\frac{\beta(\int_0^1 g)^2}{1-\beta\int_0^1 g^2}\right).$$
\end{example}

\begin{remark}
  The linear functionals $\sigma_N(f)$ studied here have close analogues
in recent work on scaling limits for lattice and long-range Ising
models, where similar weighted sums of spins are used to study the large-scale
behavior of the system. Such smeared observables have been used
to establish Gaussianity and triviality of critical scaling limits,
and to identify fractional Gaussian free field correlations in
subcritical long-range models \cite{aizenman2021marginal,panis2023triviality,
gunaratnam2025emergence}. 
\end{remark}

\subsection{Scaling Limit for the Partial Sum Process}
Theorem \ref{thm:isingWeakConv} can be used to identify the scaling limit of the partial sum process $S_N(t) \coloneqq N^{-1/2} \sum_{i=1}^{\lfloor Nt\rfloor}\sigma_i$ as $N \rightarrow \infty$. Let us define the following continuous Gaussian process:
\begin{equation}
\label{defXt}
    X(t)\coloneqq B(t)+ \sum_{i=1}^\infty \left(\frac{1}{\sqrt{1-\beta \lambda_i}}-1\right) \Phi_i(t) Z_i,
\end{equation}
where $Z_i= \int_0^1\phi_i(t)\,\mathrm dB(t)$, $\Phi_i(t)=\int_0^t \phi_i(x)\, \mathrm dx$ and $\{B(t)\}_{t\in [0,1]}$ denotes the standard Brownian motion. The well-definedness of this process along with its covariance kernel are established in Lemma \ref{constructXt}. We will shortly prove that the process $X$ is the correct candidate for the functional weak limit of $S_N$ in an appropriate sense. Let us first derive the exact form of this limiting process for Ising models on the various well-known graph ensembles discussed in the examples above.

\begin{example}[Block models: limiting process]
\label{example:block-limiting-process}
For the 2-block graphon model in Example
\ref{example:block-linear-stat}, 
\[
    \Phi_+(t)=t,\qquad
    Z_+=B(1),\qquad
    \Phi_-(t)=t\wedge(1-t),\qquad
    Z_-=2B\left(\frac12\right)-B(1).
\]
Since all remaining eigenvalues are zero, \eqref{defXt} gives
\begin{eqnarray*}
    &&X^{\mathrm{block}}(t)\\
    &=&
    B(t)+ \left(\sqrt{\frac{2}{2-\beta(p+ q)}}-1\right)tB(1)
    +\left(\sqrt{\frac{2}{2-\beta(p-q)}}-1\right)(t\wedge(1-t))
    \left(2B\left(\frac12\right)-B(1)\right).
\end{eqnarray*}
for $0<\beta<2/(p+q)$. In particular, note that
\(
    X^{\mathrm{block}}(1)
    =
    \sqrt{\frac{2}{2-\beta(p+q)}}\,B(1),
\)
recovering the scaled-magnetization limit in Example
\ref{example:block-linear-stat}.

For the Erd\H{o}s--R\'enyi and random bipartite special cases,
respectively, the limiting processes simplify to
\begin{align*}
    X^{\mathrm{ER}}(t)
    &=
    B(t)
    +
    \left(
        \frac{1}{\sqrt{1-\beta}}-1
    \right)tB(1),
    \qquad 0<\beta<1,
    \\
    X^{\mathrm{RB}}(t)
    &=
    B(t)
    +
    \left(
        \sqrt{\frac{2}{2-\beta}}-1
    \right)tB(1) \notag\\
    &\quad+
    \left(
        \sqrt{\frac{2}{2+\beta}}-1
    \right)(t\wedge(1-t))
    \left(
        2B\left(\frac12\right)-B(1)
    \right),
    \qquad 0<\beta<2.
   % \label{eq:RB-limiting-process}
\end{align*}
\end{example}

\begin{example}[Rank-one graphon model: limiting process]
\label{example:rank-one-limiting-process}
For the rank-one graphon model in Example
\ref{example:rank-one-linear-stat}, $T_W$ has only one nonzero eigenvalue $\|g\|_2^2$, with corresponding orthonormal eigenvector $g/\|g\|_2$. Hence, $Z_1 := \|g\|_2^{-1} \int_0^1 g(t) d B(t)$, and therefore, the limiting process in this case is
\[
    X^{\mathrm{rank}}(t)
    =
    B(t)
    +
    \frac{1}{\|g\|_2^2}\left(
        \frac{1}{\sqrt{1-\beta\|g\|_2^2}}-1
    \right)
    \left(\int_0^t g(x) dx\right)
    \left(\int_0^1g(x)\,\mathrm dB(x)\right), \qquad 0<\beta<\frac{1}{\|g\|_2^2}.
\]
In particular, note that  $ X^{\mathrm{rank}}(1)$ is a centered Gaussian with variance given by:
\begin{eqnarray*}
\operatorname{Var}\left(X^{\mathrm{rank}}(1)\right)
&=&
1+
\frac{2}{\|g\|_2^2}
\left(
    \frac{1}{\sqrt{1-\beta\|g\|_2^2}}-1
\right)
\left(\int_0^1 g(x)\,\mathrm dx\right)
\operatorname{Cov}\left(
    B(1),\int_0^1 g(x)\,\mathrm dB(x)
\right)\\
&&\quad+
\frac{1}{\|g\|_2^4}
\left(
    \frac{1}{\sqrt{1-\beta\|g\|_2^2}}-1
\right)^2
\left(\int_0^1 g(x)\,\mathrm dx\right)^2
\operatorname{Var}\left(
    \int_0^1 g(x)\,\mathrm dB(x)
\right)\\
&=&
1+
\frac{2}{\|g\|_2^2}
\left(
    \frac{1}{\sqrt{1-\beta\|g\|_2^2}}-1
\right)
\left(\int_0^1 g(x)\,\mathrm dx\right)^2\\
&&\quad+
\frac{1}{\|g\|_2^2}
\left(
    \frac{1}{\sqrt{1-\beta\|g\|_2^2}}-1
\right)^2
\left(\int_0^1 g(x)\,\mathrm dx\right)^2\\
&=&
1+
\frac{1}{\|g\|_2^2}
\left[
    \frac{1}{1-\beta\|g\|_2^2}-1
\right]
\left(\int_0^1 g(x)\,\mathrm dx\right)^2\\
&=&
1+
\frac{\beta}{1-\beta\|g\|_2^2}
\left(\int_0^1 g(x)\,\mathrm dx\right)^2,
\end{eqnarray*}
recovering the scaled-magnetization limit in Example \ref{example:rank-one-linear-stat}.
\end{example}

The limiting Gaussian process $X$ also admits a natural dynamical representation in terms of stationary Ornstein--Uhlenbeck processes, providing a principled way to simulate correlated copies of $X$ with tunable dependence; see Appendix \ref{rem:OU-representation-X} for details.
The following finite-dimensional weak convergence result for the process $S_N(t)$ is crucial towards establishing a Donsker-type scaling limit for the same. Its proof is given in Section \ref{sec:SNfindimcv8}.

\begin{prop}\label{lem:donsker_fdd}
For every \(k\ge 1\) and every \(t_1,\ldots,t_k\in[0,1]\), the conditional law of 
\((S_N(t_1),\ldots,S_N(t_k))\) given $A_N$, viewed as a random probability measure on $\R^k$, converges weakly in probability to the law of $(X(t_1),\ldots,X(t_k))$. Consequently,  \((S_N(t_1),\ldots,S_N(t_k))\xrightarrow{d} (X(t_1),\ldots,X(t_k))\).
\end{prop}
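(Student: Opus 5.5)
Apply Theorem~\ref{thm:isingWeakConv} with the test functions $f_j\coloneq \mathbf 1_{[0,t_j]}$, $j=1,\ldots,k$, and then identify the limiting covariance with that of $X$. Each indicator $\mathbf 1_{[0,t]}$ is Riemann integrable on $[0,1]$, since it is bounded with at most one discontinuity. By definition of the partial-sum process, $S_N(t_j)=\sigma_N(\mathbf 1_{[0,t_j]})$ exactly: the sum $\frac{1}{\sqrt N}\sum_{i} \mathbf 1_{[0,t_j]}(i/N)\sigma_i$ runs over those $i$ with $i/N\le t_j$, that is, $i\le \lfloor Nt_j\rfloor$. Theorem~\ref{thm:isingWeakConv} therefore gives directly that the conditional law of $(S_N(t_1),\ldots,S_N(t_k))$ given $A_N$ converges weakly in probability to $\mathcal N_k(\boldsymbol 0,\Sigma)$, where
\[
\Sigma_{jl}=\bigl\langle \mathbf 1_{[0,t_j]},(I-\beta T_W)^{-1}\mathbf 1_{[0,t_l]}\bigr\rangle .
\]

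It remains to show that $(X(t_1),\ldots,X(t_k))\sim\mathcal N_k(\boldsymbol 0,\Sigma)$. We invoke Lemma~\ref{constructXt}, which establishes that $X$ is a well-defined centered Gaussian process and computes its covariance kernel. If that lemma already states $\mathbb E[X(s)X(t)]=\langle \mathbf 1_{[0,s]},R_\beta\mathbf 1_{[0,t]}\rangle$, we are done. Otherwise we verify it spectrally, as follows.

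Write $B(t)=\int_0^1\mathbf 1_{[0,t]}\,\mathrm dB$. Note that $\Phi_i(t)=\langle \mathbf 1_{[0,t]},\phi_i\rangle$, and that the $Z_i$ are i.i.d.\ standard normals by the Itô isometry, because $\{\phi_i\}$ is orthonormal. Complete $\{\phi_i\}$ to an orthonormal basis of $L^2[0,1]$ by adding a basis of $\ker T_W$, on which $\lambda_i=0$ and the correction term vanishes. Expanding $\mathbf 1_{[0,t]}=\sum_i\Phi_i(t)\phi_i$ gives $B(t)=\sum_i\Phi_i(t)Z_i$, with convergence in $L^2(\mathbb P)$. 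Hence
\[
X(t)=\sum_i (1-\beta\lambda_i)^{-1/2}\Phi_i(t)Z_i,
\]
and therefore
\[
\mathbb E[X(s)X(t)]=\sum_i\frac{\Phi_i(s)\Phi_i(t)}{1-\beta\lambda_i}=\bigl\langle \mathbf 1_{[0,s]},(I-\beta T_W)^{-1}\mathbf 1_{[0,t]}\bigr\rangle ,
\]
by the spectral theorem for $R_\beta$. The interchange of sums and expectations is justified because the coefficients $(1-\beta\lambda_i)^{-1/2}$ are uniformly bounded by $(1-\beta\|W\|_{\mathrm{op}})^{-1/2}$ under Assumption~\ref{assumption12}(1), and $\sum_i\Phi_i(t)^2=t<\infty$. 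Since a centered Gaussian vector is determined by its covariance, the law of $(X(t_1),\ldots,X(t_k))$ is $\mathcal N_k(\boldsymbol 0,\Sigma)$.

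Finally, the unconditional (annealed) statement follows from the quenched one. For a bounded continuous $h$, write $\mathbb E h(S_N)=\mathbb E[\mathbb E(h(S_N)\mid A_N)]$. The inner conditional expectation converges in probability to $\mathbb E h(X)$, and it is bounded, so dominated convergence gives $\mathbb E h(S_N)\to\mathbb E h(X)$. The argument is essentially a corollary of Theorem~\ref{thm:isingWeakConv}, and no step is a real obstacle. The only points needing care are the identification $S_N(t)=\sigma_N(\mathbf 1_{[0,t]})$ and the spectral identification of the covariance, in particular the kernel directions of $T_W$ and the $L^2$ convergence of the series defining $X$.
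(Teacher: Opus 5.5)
Your proposal is correct and follows essentially the paper's route: you identify $S_N(t_j)=\sigma_N(\mathbf 1_{[0,t_j]})$, apply Theorem~\ref{thm:isingWeakConv}, and match the limiting covariance $\langle \mathbf 1_{[0,s]},(I-\beta T_W)^{-1}\mathbf 1_{[0,t]}\rangle$ with that of $X$ via Lemma~\ref{constructXt}. The differences are cosmetic: you use the multivariate form of the theorem directly instead of re-running Cram\'er--Wold on $\sum_r a_r\mathbf 1_{[0,t_r]}$, and you spell out the spectral identification of $\min\{s,t\}+\sum_i\frac{\beta\lambda_i}{1-\beta\lambda_i}\Phi_i(s)\Phi_i(t)$ with the resolvent pairing, a step the paper leaves implicit.
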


We now establish a notion of Donsker-type convergence of the sequence of processes $S_N$ to the process $X$. Towards this, let \(D[0,1]\) denote the space of real-valued c\`adl\`ag functions on
\([0,1]\), i.e.
\[
    D[0,1]
    :=
    \left\{
    f:[0,1]\to\mathbb R:
    f \text{ is right-continuous on }[0,1)
    \text{ and } f(t-) \text{ exists for every } t\in(0,1]
    \right\}.
\]
We equip \(D[0,1]\) with the Skorokhod topology, induced by the metric
\[
    d_{\mathrm{Sk}}(f,g)
    =
    \inf_{\lambda\in\Lambda}
    \left\{
    \sup_{0\le t\le1}|\lambda(t)-t|
    \vee
    \sup_{0\le t\le1}|f(\lambda(t))-g(t)|
    \right\}
\]
where \(\Lambda\) is the collection of strictly increasing bijections
\(\lambda:[0,1]\to[0,1]\). For more details on the space $D[0,1]$ and the Skorokhod topology, see \cite[Chapter 3, Section 12]{billingsley1999convergence}.
The following theorem, proved in Section \ref{proof:donsker}, establishes a Donsker-type scaling limit for the process $S_N$.

    \begin{thm}
\label{thm:donsker}

The conditional law of $S_N$ given $A_N$, viewed as a
$\mathcal P(D[0,1])$-valued random probability measure, where
$D[0,1]$ is equipped with the Skorokhod topology and
$\mathcal P(D[0,1])$ with the corresponding topology of weak convergence,
converges weakly in probability to the law of $X$. In particular, the
unconditional laws of $S_N$ converge weakly to the law of $X$ as probability
measures on $D[0,1]$.
\end{thm}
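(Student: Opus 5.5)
We prove the quenched statement by combining the quenched finite-dimensional convergence of Proposition \ref{lem:donsker_fdd} with a quenched tightness criterion in $D[0,1]$. Throughout, $X$ has continuous paths (Lemma \ref{constructXt}). By Billingsley \cite[Theorem 13.5]{billingsley1999convergence} it therefore suffices to control, under the conditional measure, moments of products of increments. Concretely, we will show that there exist a constant $C$ and events $\mathcal E_N$ with $\P(\mathcal E_N)\to1$ such that on $\mathcal E_N$, for all $0\le r\le s\le t\le1$,
\[
\E_{\beta}\big[(S_N(s)-S_N(r))^2(S_N(t)-S_N(s))^2\,\big|\,A_N\big]\le C\,\big(F_N(t)-F_N(r)\big)^{2},
\]
where $F_N(t)=\lfloor Nt\rfloor/N$. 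The bound with $F_N$ in place of $t$ is the discrete form that handles increments on scales below $1/N$, where one of the two increments vanishes. It is enough because the standard argument (Billingsley's proof of Donsker's theorem) tolerates the lattice discrepancy.

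To pass from these pieces to convergence in probability of the random law, we use a subsequence argument. Since $\mathcal P(D[0,1])$ is metrizable, it suffices to show that every subsequence has a further subsequence along which $d(\mu_N,\mathrm{Law}(X))\to0$ almost surely, where $\mu_N$ is the conditional law. Proposition \ref{lem:donsker_fdd}, applied to a countable dense set of time tuples together with diagonalization, gives a subsequence along which almost surely all finite-dimensional conditional laws on rational times converge. Along a further subsequence, $\mathbf 1_{\mathcal E_N}\to1$ eventually almost surely, by Borel--Cantelli. On that almost sure event, the moment bound gives tightness of $\mu_N$. The rational-time finite-dimensional limits, together with continuity of $X$ and the fact that $S_N(1)-S_N(1-\delta)$ is small, identify every limit point as $\mathrm{Law}(X)$. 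The unconditional statement then follows by averaging with bounded continuous test functions.

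The main obstacle is the fourth-moment increment bound under the quenched Ising measure. We would derive it from a sub-Gaussian (or fourth-moment) estimate of the form $\E[\sigma_N(f)^4\mid A_N]\le C\|f\|_{\ell^2_N}^4$, uniformly over $f$ taking values in $\{0,1\}$ on intervals, where $\|f\|^2_{\ell^2_N}=N^{-1}\sum_i f(i/N)^2$. Applying it to the two increments via Cauchy--Schwarz gives the bound $C\,(F_N(s)-F_N(r))(F_N(t)-F_N(s))\le C(F_N(t)-F_N(r))^2$.

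To get the fourth-moment estimate, the first approach is the ferromagnetic structure mentioned in the introduction. Since $A_N\ge0$, the Griffiths--GHS/Lebowitz inequalities bound the truncated four-point function by sums of products of two-point functions, so it is enough to bound $\sum_{i,j\in I}\E[\sigma_i\sigma_j\mid A_N]$ by $C|I|$ for every interval $I$. That two-point sum is controlled if $\beta\|A_N\|_{\mathrm{op}}/(N\theta_N)$ stays below $1-\epsilon$ with high probability, which requires operator-norm concentration $\|A_N-\E A_N\|_{\mathrm{op}}=O_P(\sqrt{N\theta_N})=o(N\theta_N)$ under $N\theta_N\to\infty$. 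Given that, a high-temperature bound $\mathrm{Cov}\preceq (I-\beta J')^{-1}$ yields the result, for instance through the Hubbard--Stratonovich representation comparing the Ising measure with a Gaussian, or via Dobrushin/Bakry--Émery type spectral-gap estimates. If the two-point comparison proves delicate, the alternative is to reuse the partition-function ratio estimates from the proof of Theorem \ref{thm:isingWeakConv}. Those bound the conditional moment generating function of $\sigma_N(f)^2$ near $0$ uniformly in $f$ with $\|f\|_\infty\le1$, and an exponential-moment bound of the form $\E[e^{\lambda\sigma_N(f)^2/\|f\|^2_{\ell^2_N}}\mid A_N]\le C$ immediately gives the fourth-moment estimate.
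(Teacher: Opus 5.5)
Your architecture matches the paper's proof. You use quenched finite-dimensional convergence from Proposition~\ref{lem:donsker_fdd} on rational times, and a quenched fourth-moment increment bound built from operator-norm concentration of $A_N/(N\theta_N)$, a two-point bound, and Lebowitz's four-point inequality. You then close with a subsequence/Borel--Cantelli argument. The one structural difference is harmless: you check Billingsley's product-of-increments criterion directly in $D[0,1]$, whereas the paper applies Kolmogorov's criterion to the interpolation $\widetilde S_N$ in $C[0,1]$. Your lattice issue also disappears: the product of increments vanishes when $t-r<1/N$, and otherwise $F_N(t)-F_N(r)\le 2(t-r)$.

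The gap is the step you call the main obstacle: turning $\beta\lambda_1(A_N)/(N\theta_N)\le q<1$ into $\sum_{i,j\in I}\mathbb E[\sigma_i\sigma_j\mid A_N]\le C|I|$. Assumption~\ref{assumption12} controls only the top eigenvalue of $J:=\beta A_N/(N\theta_N)$, and the routes you name need more than that.
\begin{itemize}
\item The Hubbard--Stratonovich/Brascamp--Lieb comparison with a Gaussian, and the Ising spectral-gap results built on it, require $0\prec J\prec I$ after a free diagonal shift, i.e.\ $\lambda_{\max}(J)-\lambda_{\min}(J)<1$. For the bipartite graphon of Example~\ref{example:block-linear-stat} this spread tends to $\beta$, while the theorem covers all $\beta<2$.
\item Classical Dobrushin needs $\beta\sup_x\int_0^1W(x,y)\,dy<1$. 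This is strictly stronger than $\beta\|W\|_{\mathrm{op}}<1$ for non-regular graphons; for $W(x,y)=xy$ it fails for $\beta\in[2,3)$. A non-regular bipartite-type graphon defeats both routes at once.
\item The PSD inequality $\mathrm{Cov}\preceq(I-\beta J')^{-1}$ that you state is more than you need, and it is not what these methods deliver.
\end{itemize}

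The missing idea, which is what the paper uses, is to exploit ferromagnetism entrywise. The high-temperature (path) expansion gives $\mathbb E[\sigma_i\sigma_j\mid A_N]\le\sum_{\gamma:i\to j}\prod_{e\in\gamma}B_N(e)\le\bigl((I-B_N)^{-1}\bigr)_{ij}$, where $B_N$ is the off-diagonal entrywise $\tanh$ of $J$. Perron--Frobenius then gives $\|B_N\|_{\mathrm{op}}\le\lambda_1(J)\le q$. Hence $\bm u^\top\mathbb E[\bs\bs^\top\mid A_N]\bm u\le\|\bm u\|_2^2/(1-q)$ for all $\bm u\ge0$, which is exactly what interval indicators and Lebowitz's inequality require, and it uses only the top eigenvalue. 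Alternatively, the spectral-norm refinement of Dobrushin's criterion (path coupling in the Perron-weighted Hamming metric) needs only $\|B_N\|_{\mathrm{op}}<1$. It would give a Poincar\'e inequality and hence the same variance bound.

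Your fallback does not work. The partition-function estimates behind Theorem~\ref{thm:isingWeakConv} give convergence in probability of the conditional m.g.f.\ only for each fixed Riemann-integrable $f$ and fixed $t$, with no uniformity in $f$. They say nothing about intervals of length $O(1/N)$, where $\sigma_N(f)^2/\|f\|^2_{\ell^2_N}$ is a microscopic quantity. Moreover, $o_P(1)$ errors cannot be union-bounded over the $\asymp N^2$ intervals needed for a single high-probability event $\mathcal E_N$.
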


An illustrative alternative derivation of the unconditional version of Theorem \ref{thm:donsker} for the Curie-Weiss model is given in Section \ref{rem:CW-alternative-donsker} of the Appendix.

\subsection{A Functional Central Limit Theorem for Linear Statistics} We next give a process-level version of Theorem
\ref{thm:isingWeakConv} indexed by a compact class of absolutely
continuous test functions.
\begin{defn}
    A function $f: [0,1] \to \mathbb{R}$ is absolutely continuous if for every $\varepsilon > 0$, there exists a $\delta > 0$ such that for any finite collection of pairwise disjoint subintervals $\{(x_i, y_i)\}_{i=1}^n$ of $[0,1]$ satisfying
\(
\sum_{i=1}^n (y_i - x_i) < \delta
\),
we have
\(
\sum_{i=1}^n |f(y_i) - f(x_i)| < \varepsilon
\).
\end{defn}

Let us denote the class of all absolutely continuous functions $f: [0,1] \to \R$ by $AC[0,1]$.
It follows from the fundamental theorem of calculus for Lebesgue integrals, that every $f \in AC[0,1]$ is differentiable Lebesgue almost everywhere, and its derivative $f' \in L^1[0,1]$ satisfies: 
$$f(t) = f(0) + \int_0^t f'(s)\,\mathrm ds\qquad\text{for all}~t\in [0,1].$$
Let $\mathcal C$ be a compact subset of $AC[0,1]$ with respect to the metric 
\[
d_{AC}(f,g)
\coloneq
|f(1)-g(1)|
+
\int_0^1 |f'(t)-g'(t)|\,\mathrm dt.
\]

For example, for fixed $M,L,H<\infty$ and $\alpha\in(0,1]$, the class
\[
\mathcal C_{M,L,H,\alpha}
\coloneq
\left\{
f\in C^1[0,1]:
\|f\|_\infty\le M,\ 
\|f'\|_\infty\le L,\ 
|f'(t)-f'(s)|\le H|t-s|^\alpha
\text{ for all }s,t\in[0,1]
\right\}
\]
is a compact subset of $AC[0,1]$ under the metric $d_{AC}$ by the Arzel\`a--Ascoli theorem (applied to the class of derivatives $\{f':f\in\mathcal C_{M,L,H,\alpha}\}$), where $C^1[0,1]$ denotes the set of continuously differentiable functions on $[0,1]$. For any fixed $m\geq 1$ and constants
$M_0,\ldots,M_m<\infty$, the finite-dimensional polynomial class
\[
\mathcal{PC}_{m,M_0,\ldots,M_m} := \left\{
\sum_{j=0}^m a_jt^j:
|a_j|\le M_j,\ 0\le j\le m
\right\} \subset \mathcal{C}_{\sum_j M_j, ~\sum_j jM_j,~ \sum_j j(j-1) M_j,~1}
\]
is closed, hence compact under $d_{AC}$. In what follows, we will implicitly assume that the space $AC[0,1]$ (and hence, all its subsets) is equipped with the metric $d_{AC}$.

Next, define the map
\[
\Psi:C[0,1]\longrightarrow C(\mathcal C),
\]
where $C(\mathcal C)$ denotes the set of all real-valued continuous functions on $\mathcal C$, by
\[
\Psi(x)(f)
\coloneq
x(1)f(1)-\int_0^1x(t)f'(t)\,\mathrm dt,
\qquad x\in C[0,1],\ f\in\mathcal C.
\]
Indeed, $\Psi(x)\in C(\mathcal C)$ for every $x\in C[0,1]$, since
\[
|\Psi(x)(f)-\Psi(x)(g)|
\le
\|x\|_\infty d_{AC}(f,g).
\]
We then
define the centered Gaussian process $(\mathbb G(f))_{f\in \mathcal C}$ as:
\[
\mathbb G(f)
\coloneq
\Psi(X)(f)
=
X(1) f(1)-\int_0^1 X(t)f'(t)\,\mathrm dt,
\qquad f\in\mathcal C,
\]
where the process $X$ is defined in \eqref{defXt}. Denote the process $(\mathbb G(f))_{f\in \mathcal C}$ by $\mathbb G$, and let $\mathbb G_N := (\sigma_N(f))_{f\in\mathcal C}$. Note that for any two $f,g \in \mathcal C$,
\[
\begin{aligned}
|\sigma_N(f)-\sigma_N(g)|
&=
\left|
\frac{1}{\sqrt N}
\sum_{i=1}^N
\left(
f\left(\frac{i}{N}\right)
-
g\left(\frac{i}{N}\right)
\right)\sigma_i
\right|\\
&\le
\frac{1}{\sqrt N}
\sum_{i=1}^N
\left|
f\left(\frac{i}{N}\right)
-
g\left(\frac{i}{N}\right)
\right|\\
&\le
\sqrt N\,\|f-g\|_\infty\\
&\le
\sqrt N\,d_{AC}(f,g).
\end{aligned}
\]
This shows that the map $f\to \sigma_N(f)$ is continuous on the domain $\mathcal{C}$, and hence, $\mathbb G_N \in C(\mathcal{C})$. The following theorem establishes weak convergence of the process $\mathbb G_N$ to the process $\mathbb G$ in probability.

\begin{thm}\label{thm:process_class}
Let $\mathcal{C}$ be a compact subset of the metric space $(AC[0,1],d_{AC})$. Then, the conditional law of
$\mathbb G_N$ given $A_N$, viewed as a
$\mathcal P(C(\mathcal C))$-valued random probability measure, converges
weakly in probability to the law of $\mathbb G$. In particular, the
unconditional laws of $\mathbb G_N$ converge weakly
to the law of $\mathbb G$ as probability measures on $C(\mathcal C)$.
\end{thm}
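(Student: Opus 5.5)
The plan is to realize $\mathbb G_N$, up to a uniformly negligible error, as the image of the partial-sum process $S_N$ under the map $\Psi$, and then combine Theorem \ref{thm:donsker} with a continuous mapping argument. Write $F_f(x)=f(x)$ and note that summation by parts (Abel summation) gives, for $f\in AC[0,1]$,
\[
\sigma_N(f)=\frac{1}{\sqrt N}\sum_{i=1}^N f\!\left(\tfrac iN\right)\sigma_i
= f(1)S_N(1)-\sum_{i=1}^{N-1} S_N\!\left(\tfrac iN\right)\left(f\!\left(\tfrac{i+1}{N}\right)-f\!\left(\tfrac iN\right)\right)
= f(1)S_N(1)-\int_0^1 S_N(t)f'(t)\,\mathrm dt + \text{(boundary term)},
\]
where the last identity uses that $S_N$ is constant on $[i/N,(i+1)/N)$, so $\int_{i/N}^{(i+1)/N}S_N f' = S_N(i/N)(f((i+1)/N)-f(i/N))$; the only discrepancy is the interval $[0,1/N)$, where $S_N=0$, so in fact the identity is exact: $\sigma_N(f)=\Psi(S_N)(f)$, with $\Psi$ extended to $D[0,1]$ by the same formula. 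I will double-check the endpoint bookkeeping, but at worst the error is bounded by $\|S_N\|_\infty \int_{1-1/N}^1|f'|$, which is uniformly small over compact $\mathcal C$ since compactness in $d_{AC}$ gives uniform integrability of $\{f'\}$.

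Next, view $\Psi$ as a map $D[0,1]\to C(\mathcal C)$ (sup norm). For $x,y\in D[0,1]$, $\sup_{f\in\mathcal C}|\Psi(x)(f)-\Psi(y)(f)|\le \sup_{f}|f(1)|\,|x(1)-y(1)| + \sup_f\int_0^1|x-y||f'|$. Since $X$ has continuous paths, Skorokhod convergence $x_n\to x\in C[0,1]$ implies uniform convergence, so $\Psi$ is continuous at every point of $C[0,1]$ (boundedness of $\sup_f(|f(1)|+\|f'\|_1)$ follows from compactness). Measurability of $\Psi$ on $D[0,1]$ is routine since $\Psi(x)(f)$ depends on $x$ through $x(1)$ and integrals, and $C(\mathcal C)$ is separable. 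Then the continuous mapping theorem, applied to the law of $X$ which charges $C[0,1]$, yields the unconditional statement.

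For the quenched statement, I will use the characterization of weak convergence in probability via a metric $d$ on $\mathcal P$: by Theorem \ref{thm:donsker}, $d(\mathcal L(S_N\mid A_N),\mathcal L(X))\to 0$ in probability, so along any subsequence there is a further subsequence on which this holds almost surely; on that event the (deterministic) continuous mapping theorem gives $\mathcal L(\Psi(S_N)\mid A_N)=\mathcal L(\mathbb G_N\mid A_N)\to\mathcal L(\Psi(X))=\mathcal L(\mathbb G)$ weakly. The subsequence criterion then gives convergence in probability. The main obstacle I expect is purely technical: verifying the summation-by-parts identity exactly (or controlling its error uniformly in $f\in\mathcal C$, using uniform integrability of $f'$ from $d_{AC}$-compactness) and checking continuity/measurability of $\Psi$ on $D[0,1]$ at continuous limits; the probabilistic content is entirely inherited from Theorem \ref{thm:donsker}.
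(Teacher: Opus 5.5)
Your proposal is correct. It shares its backbone with the paper's proof: the exact Abel-summation identity $\sigma_N(f)=f(1)S_N(1)-\int_0^1 S_N(t)f'(t)\,\mathrm dt$, i.e.\ $\mathbb G_N=\Psi(S_N)$, followed by continuous mapping and the subsequence criterion for the quenched statement.

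The difference is in which functional limit you push through $\Psi$.
\begin{itemize}
\item You apply $\Psi$ on $D[0,1]$ directly to the Skorokhod-space limit of Theorem~\ref{thm:donsker}. You use that $J_1$-convergence to a continuous limit is uniform, so $\Psi$ is continuous at every point of $C[0,1]$, which carries the law of $X$.
\item The paper instead uses the intermediate result \eqref{eq:interpolated-C-convergence} for the linear interpolation $\widetilde S_N$ in $C[0,1]$, where $\Psi$ is globally $M_{\mathcal C}$-Lipschitz. It then absorbs the discrepancy through the deterministic bound $\|\mathbb G_N-\Psi(\widetilde S_N)\|_\infty\le M_{\mathcal C}\|S_N-\widetilde S_N\|_\infty\le M_{\mathcal C}N^{-1/2}$ in the bounded-Lipschitz metric.
\end{itemize}
Your route is shorter and needs only the statement of Theorem~\ref{thm:donsker}. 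In exchange, it requires the extended continuous mapping theorem and measurability of $\Psi$ on $D[0,1]$. The paper's route stays entirely in $C[0,1]$ and avoids all Skorokhod-topology considerations, at the cost of one extra comparison step.

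On the measurability point you flagged: it is automatic, because $\Psi$ is in fact continuous on all of $D[0,1]$. If $x_n\to x$ in $J_1$, then $x_n(1)\to x(1)$, $\sup_n\|x_n\|_\infty<\infty$, and $x_n\to x$ at all continuity points of $x$, hence Lebesgue-a.e. The $L^1$-compactness of $\{f':f\in\mathcal C\}$ then upgrades this to $\sup_{f\in\mathcal C}\bigl|\int_0^1(x_n-x)f'\bigr|\to0$, via a finite $\varepsilon$-net plus dominated convergence. Finally, your fallback error bound $\|S_N\|_\infty\int_{1-1/N}^1|f'|$ is unnecessary. As you noticed, the identity is exact because $S_N\equiv0$ on $[0,1/N)$ and $S_N$ is constant on each $[i/N,(i+1)/N)$.
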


The proof of Theorem \ref{thm:process_class} is given in Section \ref{proff:thmproclass222}. Related function-indexed weak convergence results, based on integration-by-parts representations and bounded-variation classes, appear in \cite{radulovic2015copula,radulovic2017stationary}. Gaussian limits for linear statistics indexed by regular test functions have also been studied extensively in random matrix theory; see, for example, \cite{RiderVirag2007,sosoe2013regularity}.

\subsection{A Distributional Limit for the Empirical Spin Field} We finally complement
Theorems~\ref{thm:isingWeakConv}--\ref{thm:process_class}
with convergence of the full empirical spin field 
\[
\eta_N = \frac1{\sqrt N}
\sum_{i=1}^N\sigma_i\delta_{i/N}
\]
in a negative
Sobolev space. Negative-Sobolev and related generalized-function formulations of
spectral fluctuations have appeared in several parts of random matrix
theory, including for circular Dyson Brownian motion, the logarithms
of characteristic polynomials of the CUE and GUE, Wigner
eigenvalue-counting fields, and the Ginibre ensemble
\cite{Spohn1998Dyson,HughesKeatingOConnell2001,
FyodorovKhoruzhenkoSimm2016,CipolloniLopatto2026,RiderVirag2007}.
These works include weak-convergence results in explicit Sobolev or
closely related Hilbert spaces, as well as Gaussian-free-field limits
formulated through test-function pairings, and provide useful
precedents for treating spectral fluctuations as random generalized
functions rather than only through finitely many scalar observables.

\subsubsection{Sobolev embedding and canonical representation of signed measures}\label{sobolevcan8} To describe our setup, let us denote by
\[
e_0(x):=1,
\qquad
e_k(x):=\sqrt{2}\cos(\pi kx),\quad k\geq1,
\]
the cosine orthonormal basis of \(L^2[0,1]\), and set
\(\kappa_k:=1+\pi^2k^2\) for \(k\geq0\).
For \(r\in\mathbb R\), let \(H^r(0,1)\) denote the Hilbert space
obtained by completing
\(S^\#:=\operatorname{span}\{e_k:k\geq0\}\) under the norm
\[
\|u\|_{H^r}^2
:=
\sum_{k=0}^{\infty}
\kappa_k^r|\langle u,e_k\rangle|^2,
\qquad u\in S^\#.
\]
Let \(L\) denote the nonnegative self-adjoint Neumann Laplacian on
\(L^2[0,1]\), acting as \(Lu=-u''\) on its domain, subject to the
boundary conditions
\(u'(0)=u'(1)=0\). Since
\[
Le_k=\pi^2k^2e_k
\qquad\text{for }k\geq0,
\]
the preceding norm may equivalently be written, for \(u\in S^\#\), as
\[
\|u\|_{H^r}
=
\bigl\|(I+L)^{r/2}u\bigr\|_2.
\]
Thus, in the terminology of
\cite[Section~2]{MikhailetsMurachZinchenko2021},
\(\{H^r(0,1)\}_{r\in\mathbb R}\) is the Hilbert scale generated by
\((I+L)^{1/2}\). We may thus naturally refer to this family as the
inhomogeneous spectral Sobolev scale associated with the Neumann
Laplacian. For the analogous construction of spectral spaces from the
eigenfunctions and eigenvalues of a Laplacian with prescribed boundary
conditions, see
\cite[Section~2.1, equations~(2.4)--(2.5)]{CusimanoEtAl2018}.

Next, for a finite signed Borel measure \(\nu\) on \([0,1]\), set
\[
\widehat{\nu}_k
:=
\int_{[0,1]}e_k\,d\nu\quad (k\geq0)\qquad \text{and} \qquad \iota_m\nu
:=
\sum_{k=0}^m\widehat{\nu}_k e_k.
\]
It follows from Lemma \ref{soblimexists} that the limit
\(
\iota_s\nu
:=
\lim_{m\to\infty}\iota_m\nu
\)
exists in \(H^{-s}(0,1)\) for every $s>1/2$, and if \(\mathcal M([0,1])\) denotes the set of
all finite signed Borel measures on \([0,1]\), then the map
\[
\iota_s:\mathcal M([0,1])\to H^{-s}(0,1)
\]
is linear and injective.
We therefore identify \(\nu\) with its canonical image
\(\iota_s\nu\) in \(H^{-s}(0,1)\). In particular, for every
\(s>1/2\),
\[
\|\nu\|_{H^{-s}}^2
=
\lim_{m\to\infty}\|\iota_m\nu\|_{H^{-s}}^2
=
\lim_{m\to\infty}
\sum_{k=0}^m
\kappa_k^{-s}|\widehat{\nu}_k|^2
=
\sum_{k=0}^{\infty}
\kappa_k^{-s}
\left|
\int_{[0,1]}e_k\,d\nu
\right|^2
\leq
2\|\nu\|_{\mathrm{TV}}^2
\sum_{k=0}^{\infty}\kappa_k^{-s}
<\infty.
\]

In this sense, the random signed measure $\eta_N$
can be viewed canonically as an \(H^{-s}(0,1)\)-valued random element
for every \(s>1/2\), through its image \(\iota_s\eta_N\). We
henceforth suppress the embedding \(\iota_s\) and denote
this random element by \(\eta_N\).

\subsubsection{The limiting Sobolev element}\label{sec:limelem688} Under Assumption~\ref{assumption12},
\[
R_\beta:=(I-\beta T_W)^{-1}
\]
is a bounded, self-adjoint, strictly positive operator on
\(L^2[0,1]\). Choose an orthonormal basis
\((\psi_j)_{j\geq1}\) of \(L^2[0,1]\), let
\((Z_j)_{j\geq1}\) be i.i.d. standard normal random variables defined
on an auxiliary probability space
\((\Omega^*,\mathcal F^*,\mathbb P^*)\), and define, for
\(f\in L^2[0,1]\),
\begin{equation}\label{eqchifser}
\chi(f)
:=
\sum_{j=1}^\infty
\langle R_\beta^{1/2}f,\psi_j\rangle Z_j.
\end{equation}
Lemma \ref{lchiprop8} guarantees $L^2(\Omega^*)$ convergence of the RHS of \eqref{eqchifser}, and the following form for the covariance of the centered Gaussian linear process $\chi$:

$$\e^*[\chi(f)\chi(g)] = \langle f, R_\beta g\rangle.$$
Hence, for every \(s>1/2\), we have
\[
\sum_{k=0}^\infty
\mathbb E^*\!\left[
\kappa_k^{-s}\chi(e_k)^2
\right]
=
\sum_{k=0}^\infty
\kappa_k^{-s}\mathbb E^*[\chi(e_k)^2]
=
\sum_{k=0}^\infty
\kappa_k^{-s}\langle e_k,R_\beta e_k\rangle
\leq
\|R_\beta\|_{\mathrm{op}}
\sum_{k=0}^\infty\kappa_k^{-s}
<\infty.
\]
Therefore, the partial sums
\(\sum_{k=0}^n\chi(e_k)e_k\) are Cauchy in
\(L^2(\Omega^*;H^{-s}(0,1))\), and so the series
\[
\eta
:=
\sum_{k=0}^\infty\chi(e_k)e_k
\]
converges in \(L^2(\Omega^*;H^{-s}(0,1))\). Thus, for each fixed
\(s>1/2\), \(\eta\) is an \(H^{-s}(0,1)\)-valued random element, i.e.
\[
\eta\in H^{-s}(0,1)
\qquad
\mathbb P^*\text{-almost surely}.
\]

\subsubsection{Weak convergence of empirical fields} The following theorem, proved in Section \ref{sec:proofsobolevmain34}, establishes weak convergence in probability of
the conditional laws of the signed empirical measures \(\eta_N\),
viewed through their canonical embeddings as random elements of
\(H^{-s}(0,1)\), to the law of the \(H^{-s}(0,1)\)-valued random
element \(\eta\).
\begin{thm}
\label{thm:sob_conv}
For every \(s>1/2\), the conditional law of \(\eta_N\) given \(A_N\),
viewed as a
\(\mathcal P(H^{-s}(0,1))\)-valued random probability measure, where
\(H^{-s}(0,1)\) is equipped with the topology induced by the norm $\|\cdot\|_{H^{-s}}$,
and \(\mathcal P(H^{-s}(0,1))\) with the corresponding topology of weak
convergence, converges weakly in probability to the law of \(\eta\)
under \(\mathbb P^*\). In particular, the unconditional laws of
\(\eta_N\) converge weakly to the law of \(\eta\) as probability
measures on \(H^{-s}(0,1)\).
\end{thm}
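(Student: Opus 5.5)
The strategy is the standard one for Gaussian limits in a separable Hilbert space: prove convergence of finite-dimensional projections, then show the tail of the expansion is uniformly negligible. Fix $s>1/2$ and let $\Pi_m:H^{-s}(0,1)\to H^{-s}(0,1)$ be the orthogonal projection onto $\operatorname{span}\{e_0,\dots,e_m\}$. By the canonical embedding, $\Pi_m\eta_N=\iota_m\eta_N=\sum_{k=0}^m\sigma_N(e_k)e_k$, since $\widehat{(\eta_N)}_k=\sigma_N(e_k)$. Likewise $\Pi_m\eta=\sum_{k=0}^m\chi(e_k)e_k$.

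\textbf{Step 1 (finite-dimensional projections).} Each $e_k$ is continuous, hence Riemann integrable. Theorem~\ref{thm:isingWeakConv} applied to $(e_0,\dots,e_m)$ therefore gives that the conditional law of $(\sigma_N(e_0),\dots,\sigma_N(e_m))$ given $A_N$ converges weakly in probability to $\mathcal N_{m+1}(\boldsymbol 0,\Sigma^{(m)})$, where $\Sigma^{(m)}_{jk}=\langle e_j,R_\beta e_k\rangle$. By Lemma~\ref{lchiprop8}, this is exactly the law of $(\chi(e_0),\dots,\chi(e_m))$. The map $(a_k)\mapsto\sum_{k\le m}a_ke_k$ from $\mathbb R^{m+1}$ into $H^{-s}$ is continuous, and pushforward by a continuous map is continuous on $\mathcal P(\cdot)$. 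Hence the conditional law of $\Pi_m\eta_N$ converges weakly in probability to the law of $\Pi_m\eta$.

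\textbf{Step 2 (approximation argument).} Metrize weak convergence on $\mathcal P(H^{-s})$ by the bounded-Lipschitz metric $d_{BL}$. Write $\mu_N$ and $\mu_N^{(m)}$ for the conditional laws of $\eta_N$ and $\Pi_m\eta_N$, and $\mu,\mu^{(m)}$ for the laws of $\eta$ and $\Pi_m\eta$. Then
\[
d_{BL}(\mu_N,\mu)\le \mathbb E\bigl[\|\eta_N-\Pi_m\eta_N\|_{H^{-s}}\wedge 2\,\big|\,A_N\bigr]+d_{BL}(\mu_N^{(m)},\mu^{(m)})+\mathbb E^*\|\eta-\Pi_m\eta\|_{H^{-s}}.
\]
The third term tends to $0$ as $m\to\infty$ by the $L^2(\Omega^*;H^{-s})$ convergence of the series defining $\eta$. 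The second term tends to $0$ in probability for each fixed $m$ by Step 1. It therefore suffices to show that for every $\varepsilon>0$,
\[
\lim_{m\to\infty}\limsup_{N\to\infty}\P\Bigl(\mathbb E\bigl[\|\eta_N-\Pi_m\eta_N\|_{H^{-s}}^2\,\big|\,A_N\bigr]>\varepsilon\Bigr)=0 .
\]
The unconditional statement then follows by taking expectations of bounded Lipschitz test functions and using dominated convergence.

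\textbf{Step 3 (tail control, the main obstacle).} We have
\[
\mathbb E[\|\eta_N-\Pi_m\eta_N\|^2_{H^{-s}}\mid A_N]=\sum_{k>m}\kappa_k^{-s}\,\mathbb E[\sigma_N(e_k)^2\mid A_N],
\]
and
\[
\mathbb E[\sigma_N(f)^2\mid A_N]=\frac1N\sum_{i,j}f(i/N)f(j/N)\,\mathbb E[\sigma_i\sigma_j\mid A_N]\le \|f\|_\infty^2\,\mathbb E\Bigl[\Bigl(\tfrac{1}{\sqrt N}\textstyle\sum_i|\cdot|\Bigr)\Bigr]
\]
is not good enough in this crude form. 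The plan is instead to bound the conditional covariance matrix $C_N=(\mathbb E[\sigma_i\sigma_j\mid A_N])_{i,j}$ in operator norm. This gives $\mathbb E[\sigma_N(e_k)^2\mid A_N]\le \|C_N\|_{\op}\cdot\frac1N\sum_i e_k(i/N)^2\le 2\|C_N\|_{\op}$ uniformly in $k$. The tail is then at most $2\|C_N\|_{\op}\sum_{k>m}\kappa_k^{-s}$, which vanishes as $m\to\infty$ provided $\|C_N\|_{\op}=O_P(1)$.

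To get $\|C_N\|_{\op}=O_P(1)$, I would use the same ingredient behind the moment bounds used for Theorem~\ref{thm:donsker}. By ferromagnetism (Griffiths' inequality), $C_N$ has nonnegative entries. In the high-temperature regime $\beta\|W\|_{\op}<1$, together with the concentration $\|A_N/(N\theta_N)-\mathbb E A_N/(N\theta_N)\|_{\op}\to0$ (which uses $N\theta_N\to\infty$), one gets $\beta\|J_N\|_{\op}\le 1-\delta$ with high probability. Standard high-temperature bounds, for example via the Hubbard--Stratonovich transform or a Gaussian-domination/comparison with $(I-2\beta J_N)^{-1}$, then give $\sup_{\|v\|=1}\mathbb E[(v^\top\bs)^2\mid A_N]\le C(\delta)$. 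If only the weaker bound $\frac1N\mathbf 1^\top C_N\mathbf 1=O_P(1)$ is available from earlier results, I would fall back on Schur's test: nonnegative entries plus row sums bounded by $\max_i\sum_j C_N(i,j)$. Row sums can be controlled through the same rank-one perturbation/partition-function estimates, since $\sigma_N(e_k)$ is a linear statistic of the type already handled. This uniform-in-$k$ covariance bound is the step I expect to be the main obstacle, because Theorem~\ref{thm:isingWeakConv} controls only fixed finitely many test functions and gives no uniformity over the oscillating $e_k$.
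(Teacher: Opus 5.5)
Your proposal is correct and follows the paper's proof essentially step for step. The structure is the same: convergence of the projections $\Pi_K\eta_N$ via Theorem~\ref{thm:isingWeakConv} and continuous mapping, then a bounded-Lipschitz triangle inequality, then the uniform-in-$k$ tail bound $\mathbb E[\sigma_N(e_k)^2\mid A_N]\le 2/(1-q)$ on the high-probability event $\{\|W_{A_N}/\theta_N-W\|_{\op}<\varepsilon\}$.

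For that key bound, the paper uses your ``comparison with $(I-2\beta J_N)^{-1}$'' option. Concretely, it takes the path-expansion estimate $\mathbb E[\sigma_i\sigma_j\mid A_N]\le \Gamma_N(i,j)$, with $\Gamma_N=(I-B_N)^{-1}$, from the proof of Lemma~\ref{lem:fourthmomentinterpolated}, and uses Griffiths nonnegativity to pass from $|\bm c|$ to signed $\bm c$. Commit to that route. The Hubbard--Stratonovich and row-sum/Schur alternatives are unnecessary and would not obviously cover the full regime $\beta\|W\|_{\op}<1$.
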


\begin{remark}[Weak convergence of Sobolev actions]\label{remk:sobac72}
The preceding theorem, with \(s=1\), gives weak convergence in
probability of the conditional laws of the full empirical field in
\(H^{-1}(0,1)\). We now record the corresponding weaker convergence
of its actions on a fixed compact class of \(H^1(0,1)\) test
functions, which follows directly from
Theorem~\ref{thm:process_class}. This is weaker in the sense that it
retains only the actions of the field on the prescribed class, rather
than its full law in the \(H^{-1}(0,1)\)-topology.

Let \((\Omega^*,\mathcal F^*,\mathbb P^*)\) be the auxiliary
probability space on which \(Z_1,Z_2,\ldots\), and hence, \(\chi\) and
\(\eta\), are defined. Since \(\eta\in H^{-1}(0,1)\) almost surely,
its action on \(H^1(0,1)\) can be defined via the
\(H^{-1}\)--\(H^1\) duality pairing, i.e., for any
\(h\in H^1(0,1)\),
\begin{equation}\label{etahactiondef6}
\langle\eta,h\rangle_{-1,1}
:=
\sum_{k=0}^{\infty}
\chi(e_k)\langle h,e_k\rangle.
\end{equation}

It follows from Lemma \ref{sobactlem552} that the series in the RHS of \eqref{etahactiondef6} is absolutely convergent almost surely, and that for each fixed \(h\in H^1(0,1)\), the two random variables
\(\langle\eta,h\rangle_{-1,1}\) and \(\chi(h)\) are equal
\(\mathbb P^*\)-almost surely. The centered Gaussian process
\(t\mapsto\chi(\mathbf 1_{[0,t]})\) has covariance
\[
\operatorname{Cov}\left(
\chi(\mathbf 1_{[0,t]}),
\chi(\mathbf 1_{[0,s]})
\right)
=
\left\langle
\mathbf 1_{[0,t]},
R_\beta\mathbf 1_{[0,s]}
\right\rangle
=
K(s,t),
\]
where \(K\) is the covariance kernel of \(X\), as established in
Lemma~\ref{constructXt}. Hence it has the same finite-dimensional
distributions as \(X\). Moreover, the proof of
Lemma~\ref{holderX} applies verbatim and yields a continuous
modification of \(t\mapsto\chi(\mathbf 1_{[0,t]})\). Replacing \(X\)
by an equal-in-law copy, we may therefore realize \(X\) on
\((\Omega^*,\mathcal F^*,\mathbb P^*)\) as this continuous
modification.

Now, every \(h\in H^1(0,1)\), identified with its absolutely
continuous representative, satisfies
\begin{equation}
\label{h01abctr2}
h
=
h(1)\mathbf 1_{[0,1]}
-
\int_0^1h'(t)\mathbf 1_{[0,t]}\,dt
\end{equation}
in \(L^2[0,1]\). Applying the bounded linear map
\(\chi:L^2[0,1]\to L^2(\Omega^*)\), and using
\(\chi(\mathbf 1_{[0,t]})=X(t)\) in \(L^2(\Omega^*)\), gives
\[
\langle\eta,h\rangle_{-1,1}
=
\chi(h)
=
h(1)X(1)-\int_0^1X(t)h'(t)\,dt
\qquad\text{in }L^2(\Omega^*).
\]
Since \(X\) has continuous paths and \(h'\in L^1[0,1]\), the integral
on the right also exists pathwise.

Now let \(\mathcal C\) be a compact subset of \(H^1(0,1)\), equipped
with the \(H^1\)-norm topology, and equip \(C(\mathcal C)\) with the
supremum norm. For every \(u\in H^{-1}(0,1)\),
\[
|\langle u,h\rangle_{-1,1}
-
\langle u,g\rangle_{-1,1}|
\leq
\|u\|_{H^{-1}}\|h-g\|_{H^1},
\qquad h,g\in\mathcal C,
\]
so both
\[
\bigl(\langle\eta_N,h\rangle_{-1,1}\bigr)_{h\in\mathcal C}
\quad\text{and}\quad
\bigl(\langle\eta,h\rangle_{-1,1}\bigr)_{h\in\mathcal C}
\]
are \(C(\mathcal C)\)-valued random elements. Moreover, under the
canonical embedding of finite signed measures into \(H^{-1}(0,1)\),
we have
\[
\eta_N
=
\sum_{k=0}^{\infty}
\left(
\int_{[0,1]}e_k\,d\eta_N
\right)e_k
\qquad\text{in }H^{-1}(0,1),
\]
and hence
\[
\begin{aligned}
\langle\eta_N,h\rangle_{-1,1}
&=
\sum_{k=0}^{\infty}
\left(
\int_{[0,1]}e_k\,d\eta_N
\right)
\langle h,e_k\rangle\\
&=
\int_{[0,1]}h\,d\eta_N
=
\frac1{\sqrt N}
\sum_{i=1}^N
\sigma_i h\left(\frac{i}{N}\right)
=
\sigma_N(h),
\qquad h\in H^1(0,1).
\end{aligned}
\]
The second equality follows by passing to the finite cosine sums,
which converge uniformly to the continuous representative of
\(h\in H^1(0,1)\).

Now, it is not hard to show that
\[
d_{AC}(h,g)
=
|h(1)-g(1)|+\|h'-g'\|_{L^1}
\leq
C\|h-g\|_{H^1}
\]
for some constant \(C>0\). Consequently, \(\mathcal C\) is also
compact under \(d_{AC}\). Moreover, the \(H^1\)- and
\(d_{AC}\)-topologies agree on \(\mathcal C\).

Since \(\mathcal C\) is separable and both limiting indexed processes
have continuous sample paths, the preceding pointwise identity
identifies them \(\mathbb P^*\)-almost surely as
\(C(\mathcal C)\)-valued random elements. Therefore,
Theorem~\ref{thm:process_class} directly gives
\[
\mathcal L\left(
\bigl(\langle\eta_N,h\rangle_{-1,1}\bigr)_{h\in\mathcal C}
\Big|A_N
\right)
\xrightarrow{d}
\mathcal L_{\mathbb P^*}\left(
\bigl(\langle\eta,h\rangle_{-1,1}\bigr)_{h\in\mathcal C}
\right)
\]
in probability as probability measures on \(C(\mathcal C)\). 
\end{remark}

\section{Applications to Bayesian Neural Networks and Causal Inference}
In this section, we give two applications of the main results in this paper.

\subsection{Ising-Dependent Bayesian Neural Networks }
\label{sec:bayesian-neural-networks}

Let \(n\geq 1\), and let \(a:\mathbb R\to\mathbb R\) be a
measurable activation function satisfying
\begin{equation}\label{momentasmp82}
    \mathbb E[a(rZ)^2]<\infty
    \qquad\text{for every }r\geq 0,
\end{equation}
where \(Z\sim\mathcal N(0,1)\). For each \(k\), let \(A_k\) be the
adjacency matrix of a graph sampled from \(G(k,\theta_k,W)\).
Conditional on \(A_k\), let
\(\bm s=(s_1,\ldots,s_k)\) have the Ising law
\(\mathbb P_{\beta,\theta_k,W}\) in \eqref{model_def}. Independently
of \((A_k,\bm s)\), let
\(\bm w_1,\ldots,\bm w_k\stackrel{\mathrm{i.i.d.}}{\sim}
\mathcal N(\boldsymbol 0,\bm I_n)\). We consider the random
two-layer network
\begin{equation*}
    \mathcal P_k^a(\bm x)
    \coloneq
    \frac{1}{\sqrt{k}}
    \sum_{i=1}^k
    s_i a(\bm w_i^\top \bm x),
    \qquad \bm x\in\mathbb R^n.
\end{equation*}
For independent weight priors, the corresponding infinite-width
Gaussian-process limit is classical; see
\cite[Chapter~2]{neal2012bayesian}. Here the output-layer signs are
instead coupled through the Ising measure.

Let \(\bm Z\sim\mathcal N(\boldsymbol 0,\bm I_n)\), and define
\[
    m_a(\bm x)
    \coloneq
    \mathbb E a(\bm Z^\top \bm x),
    \qquad
    C_a(\bm x,\bm y)
    \coloneq
    \operatorname{Cov}\left(
        a(\bm Z^\top\bm x),
        a(\bm Z^\top\bm y)
    \right).
\]
Also set
\[
    R_\beta\coloneq(I-\beta T_W)^{-1},
    \qquad
    q_\beta\coloneq
    \langle\mathbf 1,R_\beta\mathbf 1\rangle,
\]
and introduce the covariance kernel
\begin{equation}
    K_{\beta,a}(\bm x,\bm y)
    \coloneq
    C_a(\bm x,\bm y)
    +
    q_\beta m_a(\bm x)m_a(\bm y).
    \label{eq:ising-neural-kernel}
\end{equation}

\begin{thm}
\label{thm:ising-neural-network}
Suppose that \(\beta, W\) and $\theta_k$ satisfy Assumption~\ref{assumption12}. Then, for every
\(m\geq1\) and every
\(\bm x_1,\ldots,\bm x_m\in\mathbb R^n\), the conditional law of
\(
    \bigl(
        \mathcal P_k^a(\bm x_1),\ldots,
        \mathcal P_k^a(\bm x_m)
    \bigr)
\)
given \(A_k\) converges weakly in probability to
\(\mathcal N_m(\boldsymbol 0,\bm K)\), where
\[
    \bm K_{bc}
    =
    K_{\beta,a}(\bm x_b,\bm x_c),
    \qquad 1\leq b,c\leq m.
\]
Consequently, the unconditional finite-dimensional distributions of
\(\mathcal P_k^a\) converge to those of the centered Gaussian
process with covariance kernel \(K_{\beta,a}\). In particular, for
every \(\bm x\in\mathbb R^n\),
\[
    \mathcal P_k^a(\bm x)
    \xrightarrow{d}
    \mathcal N\left(
        0,
        \operatorname{Var} 
            a(\bm Z^\top\bm x)
        +
        q_\beta
            \mathbb E^2
                a(\bm Z^\top\bm x)
    \right).
\]
\end{thm}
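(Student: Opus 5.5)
Conditional on the weights $\bm w_1,\ldots,\bm w_k$ (which are independent of $(A_k,\bm s)$), the vector $\bigl(\mathcal P_k^a(\bm x_1),\ldots,\mathcal P_k^a(\bm x_m)\bigr)$ is a collection of linear statistics of the Ising spins with random coefficient vectors $c_b(i)=a(\bm w_i^\top\bm x_b)$. The plan is to split each coefficient into its mean and a centered fluctuation, $a(\bm w_i^\top \bm x_b)=m_a(\bm x_b)+\xi_{i,b}$, so that
\[
\mathcal P_k^a(\bm x_b)= m_a(\bm x_b)\,\sigma_k(\mathbf 1) + \frac{1}{\sqrt k}\sum_{i=1}^k s_i\,\xi_{i,b}.
\]
The first term is handled directly by Theorem~\ref{thm:isingWeakConv} with $f\equiv 1$: conditional on $A_k$, $\sigma_k(\mathbf 1)\to \mathcal N(0,q_\beta)$ weakly in probability. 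The second term should behave like an independent Gaussian vector with covariance $C_a(\bm x_b,\bm x_c)$, because the $\xi_{i,\cdot}$ are i.i.d. centered across $i$ and independent of the spins. Adding the two independent limits gives the kernel $K_{\beta,a}$.

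To make this rigorous I would work with characteristic functions, conditioning on $(A_k,\bm s)$ first and integrating over the weights. Fix $\bm t\in\mathbb R^m$, and write $u=\sum_b t_b m_a(\bm x_b)$ and $\zeta_i=\sum_b t_b\xi_{i,b}$. The $\zeta_i$ are i.i.d. with mean zero and variance $v=\bm t^\top C\bm t$, which is finite by \eqref{momentasmp82}. Then
\[
\mathbb E\bigl[e^{\mathrm i\bm t^\top\mathcal P}\mid A_k,\bm s\bigr]=e^{\mathrm i u\,\sigma_k(\mathbf 1)}\prod_{i=1}^k \varphi_\zeta\!\left(\frac{s_i}{\sqrt k}\right).
\]
Since $s_i=\pm1$, each factor equals $\varphi_\zeta(\pm 1/\sqrt k)$. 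By symmetry of the modulus and the standard Lindeberg argument for finite-variance i.i.d. sums, the product converges to $e^{-v/2}$ uniformly over all sign vectors $\bm s$.

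The one subtlety is that $\varphi_\zeta$ need not be real, so $\varphi_\zeta(1/\sqrt k)$ and $\varphi_\zeta(-1/\sqrt k)$ differ. I would handle this with the expansion $\log\varphi_\zeta(\pm h)=-vh^2/2+o(h^2)$, which has no linear term because $\zeta$ is centered. This gives $\sum_i\log\varphi_\zeta(s_i/\sqrt k)=-v/2+o(1)$ uniformly in $\bm s$, and it is the step that needs care.

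Taking expectation over $\bm s$ given $A_k$ then yields
\[
\mathbb E\bigl[e^{\mathrm i\bm t^\top\mathcal P}\mid A_k\bigr]=e^{-v/2}\,\mathbb E\bigl[e^{\mathrm i u\sigma_k(\mathbf 1)}\mid A_k\bigr]+o(1),
\]
with the $o(1)$ deterministic. By Theorem~\ref{thm:isingWeakConv}, the remaining conditional expectation converges in probability to $e^{-u^2q_\beta/2}$. Hence the conditional characteristic function converges in probability, for each fixed $\bm t$, to $\exp(-\tfrac12\bm t^\top\bm K\bm t)$.

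Two routine steps remain. First, pass from pointwise-in-$\bm t$ convergence in probability to weak convergence in probability of the random conditional laws. I would use a subsequence argument: along a countable dense set of $\bm t$, extract sub-subsequences converging almost surely, then apply Lévy's continuity theorem pathwise. Second, the unconditional statement follows by bounded convergence. The main obstacle is the uniform-in-signs expansion of the product of characteristic functions; everything else reduces to Theorem~\ref{thm:isingWeakConv}.
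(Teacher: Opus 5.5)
Your proposal is correct and is essentially the paper's proof. The paper skips the explicit centering and instead expands $\log\varphi_{\bm u}(t)=i\mu_{\bm u}t-\tfrac{v_{\bm u}}{2}t^2+o(t^2)$ uniformly over $\bm s\in\{-1,1\}^k$, which gives exactly your factorization $e^{-v/2}\,\mathbb E[e^{\mathrm i u\,\sigma_k(\mathbf 1)}\mid A_k]$ up to a deterministic error, and then concludes with Theorem~\ref{thm:isingWeakConv} for $f\equiv 1$ and L\'evy's continuity theorem.

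The one step to tighten is your last one. Convergence of characteristic functions on a countable dense set of $\bm t$ does not by itself give weak convergence: point masses at $2\pi\,n!$ have characteristic functions tending to $1$ at every rational $t$. You can fix this in either of two ways:
\begin{itemize}
\item Choose the sub-subsequence using the full weak convergence in probability of $\mathcal L(\sigma_k(\mathbf 1)\mid A_k)$ from Theorem~\ref{thm:isingWeakConv}. Combined with your deterministic error, the conditional characteristic functions then converge at every $\bm t$ on a single almost-sure event.
\item Alternatively, add conditional tightness via $\mathbb E[\mathcal P_k^a(\bm x_b)^2\mid A_k]=C_a(\bm x_b,\bm x_b)+m_a(\bm x_b)^2\,\mathbb E[\sigma_k(\mathbf 1)^2\mid A_k]=O_P(1)$.
\end{itemize}
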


Theorem \ref{thm:ising-neural-network} is proved in Section \ref{nntwproof} of the appendix.

\begin{remark}[Effect of Ising dependence]
The limiting kernel \eqref{eq:ising-neural-kernel} can be written as
\begin{equation}\label{altexpker66}
     K_{\beta,a}(\bm x,\bm y)
    =
    \mathbb E\left[
        a(\bm Z^\top\bm x)
        a(\bm Z^\top\bm y)
    \right]
    +
    (q_\beta-1)
    m_a(\bm x)m_a(\bm y).
\end{equation}
Thus, relative to the classical kernel obtained under independent
output-layer signs, the Ising coupling contributes only the rank-one
term
\(
(q_\beta-1)m_a(\bm x)m_a(\bm y).
\) At \(\beta=0\) (which corresponds to i.i.d. Rademacher output-layer signs), \(q_0=1\), recovering the
usual neural-network Gaussian-process kernel \cite[Chapter~2, Section~2.1.1, equations~(2.3)--(2.4)]
{neal2012bayesian}. Also, if
\(m_a(\bm x)=0\) for every \(\bm x\) (which is true if the activation function $a$ is odd), the Ising dependence does not
affect the first-order Gaussian-process limit.
\end{remark}

\begin{remark}[ReLU activation]
The ReLU activation \(a(t):=t^+\) is unbounded and is therefore not
covered by the bounded-activation formulation explicitly stated in
\cite[Section~2.1.1]{neal2012bayesian}. It nevertheless
satisfies the second-moment condition of
Theorem~\ref{thm:ising-neural-network}, since
\[
    \mathbb E\left[
        a(\bm Z^\top\bm x)^2
    \right]
    =
    \frac{\|\bm x\|_2^2}{2}.
\]
For nonzero \(\bm x,\bm y\in\mathbb R^n\), let
\[
    \vartheta_{\bm x,\bm y}
    \coloneq
    \cos^{-1}\left(
        \frac{\bm x^\top \bm y}
        {\|\bm x\|_2\|\bm y\|_2}
    \right).
\]
Then the limiting covariance is
\begin{equation}\label{reluker22}
    K_{\beta,\mathrm{ReLU}}(\bm x,\bm y)
    =
    \frac{\|\bm x\|_2\|\bm y\|_2}{2\pi}
    \left[
        \sin\vartheta_{\bm x,\bm y}
        +
        (\pi-\vartheta_{\bm x,\bm y})
        \cos\vartheta_{\bm x,\bm y}
        +
        q_\beta-1
    \right].
\end{equation}
Indeed, if
\(U:=\langle\bm Z,\bm x\rangle\) and
\(V:=\langle\bm Z,\bm y\rangle\), then it is straightforward to verify that
\[
     \mathbb E U^+
    =
    \frac{\|\bm x\|_2}{\sqrt{2\pi}},
    \qquad
    \mathbb E V^+
    =
    \frac{\|\bm y\|_2}{\sqrt{2\pi}},\qquad \mathbb E (U^+V^+)
    =
    \frac{\|\bm x\|_2\|\bm y\|_2}{2\pi}
    \left[
        \sin\vartheta_{\bm x,\bm y}
        +
        (\pi-\vartheta_{\bm x,\bm y})
        \cos\vartheta_{\bm x,\bm y}
    \right].
\]
The claimed expression \eqref{reluker22} now follows from
\eqref{altexpker66}. At \(\beta=0\), \(q_0=1\), and the kernel \eqref{reluker22} reduces to the standard
ReLU neural-network Gaussian-process covariance; see
\cite[Appendix~B, Eq.~(11)]{lee2018deep}.
\end{remark}

\subsection{Causal Inference with Dependent Treatment Assignment and Outcome Interference}
\label{sec:causal-graphon-ising}

Recently, there has been growing interest in causal inference under cross-unit dependence and interference, motivated by applications in social, epidemiological, and economic networks. In such settings, one unit’s treatment may affect the outcomes of other units and, in some cases, the treatment assignments themselves may be dependent; see, for example,
\cite{bhadra2025causal,bhattacharya2025causal,
ogburn2024causal,forastiere2021identification,leung2022approximate,li2022randomgraph,tchetgen2012interference,hu2022directindirect,hudgens2008interference,park2026convergent,paschalidis2026sequential}.
Valid causal inference in such contexts requires careful modeling of both the treatment-assignment mechanism and the resulting outcome interference. 

We apply our fluctuation theory to a setting in
which treatment assignments follow the network-dependent Ising model
considered in this paper, while outcomes exhibit interference through
the fraction of treated peers. Cattaneo, He, and Yu \cite{cattaneo2025robust} study the classical
H\'ajek estimator when outcomes exhibit network interference and the
treatment assignments follow a Curie--Weiss law. In the
high-temperature regime, they obtain a Gaussian limit whose variance
contains an additional contribution from treatment-assignment
dependence. We extend their high-temperature Gaussian limit by replacing the
homogeneous Curie--Weiss treatment-assignment law with the
inhomogeneous Ising assignment framework
of this paper. Thus, treatment assignments may now interact through
a general graphon-induced network rather than through homogeneous
all-to-all coupling.
To isolate this extension, we take the outcome-interference network
to be complete, so that each potential outcome depends on the
fraction of treated peers. This remains a genuine interference model
and is a special case of the smooth exposure model in
\cite{cattaneo2025robust}.

Consider a population of \(N\) units, each of which is assigned either
to treatment or control. We encode the assignment of unit \(i\) by
\[
    D_i\coloneq\frac{1+\sigma_i}{2}\in\{0,1\},
    \qquad i=1,\ldots,N,
\]
so that \(D_i=1\) denotes treatment and \(D_i=0\) denotes control.
Conditional on the observed network \(A_N\), the assignment vector
\(\bm D=(D_1,\ldots,D_N)\) is generated from the Ising law
\(\bm\sigma\sim\mathbb P_{\beta,\theta_N,W}\). Thus, unlike under
independent randomization, the treatment assignments may be dependent
across units, with the dependence structure inherited from the
underlying network.

We next allow for outcome interference. For each unit \(i\), let
\(F_i\) describe how its outcome depends both on its own treatment and
on the overall treatment exposure among the remaining units. We assume
that \(F_1,F_2,\ldots\) are i.i.d. random functions, independent of
\((A_N,\bm\sigma)\), with \(F_i(d,\cdot)\in C^2[0,1]\) for
\(d\in\{0,1\}\), and that, for some deterministic \(C<\infty\),
\[
    \max_{d\in\{0,1\}}
    \sup_{x\in[0,1]}
    \sum_{r=0}^2
    \left|\partial_2^rF_i(d,x)\right|
    \leq C
    \qquad\text{almost surely},
\]
where $\partial_2^r F_i$ denotes the $r^\mathrm{th}$ derivative of $F_i$ with respect to its second argument ($x$).
For an assignment vector
\(\bm d_{-i}\in\{0,1\}^{N-1}\) of all units other than \(i\), define
the potential outcome of unit \(i\) under own treatment \(d\) by
\[
    Y_i(d;\bm d_{-i})
    \coloneq
    F_i\left(
        d,
        \frac{1}{N-1}\sum_{j\ne i}d_j
    \right).
\]
Hence, the outcome of unit \(i\) may depend not only on its own
treatment, but also on the fraction of its peers who are treated. The
observed outcome is
\(
    Y_i
    =
    Y_i(D_i;\bm D_{-i}).
\)

We use the classical H\'ajek difference-in-means estimator for the
conditional direct average treatment effect defined below. This conditional
target captures the effect of switching a unit from control to treatment,
averaged over its peers' assignments. The estimator is given by
\begin{equation}
    \widehat\tau_N
    \coloneq
    \frac{\sum_{i=1}^N D_iY_i}{\sum_{i=1}^N D_i}
    -
    \frac{\sum_{i=1}^N(1-D_i)Y_i}
         {\sum_{i=1}^N(1-D_i)}.
    \label{eq:hajek-interference}
\end{equation}
We center it by the following conditional target
\begin{equation}
    \tau_N
    \coloneq
    \frac1N\sum_{i=1}^N
    \mathbb E\left[
        Y_i(1;\bm D_{-i})-Y_i(0;\bm D_{-i})
        |
        A_N,F_1,\ldots,F_N
    \right].
    \label{eq:conditional-direct-effect}
\end{equation}

Next, set
\[
    R_i
    \coloneq
    F_i\left(1,\frac12\right)
    +
    F_i\left(0,\frac12\right),
    \qquad
    { Q
    \coloneq
    \frac12\,
    \mathbb E\left[
        \partial_2F_1\left(1,\frac12\right)
        -
        \partial_2F_1\left(0,\frac12\right)
    \right]},
\]
and define
\[
    L_i\coloneq R_i-\mathbb E[R_1]+Q,
    \qquad
    \kappa_r\coloneq\mathbb E[L_1^r],
    \quad r=1,2.
\]
In particular, \(\kappa_1=Q\). Finally, let
\[
    R_\beta\coloneq(I-\beta T_W)^{-1},
    \qquad
    q_\beta
    \coloneq
    \left\langle
        \mathbf 1,R_\beta\mathbf 1
    \right\rangle .
\]

\begin{thm}
\label{thm:hajek-graphon-ising}
Suppose that \(\beta, W\) and \(\theta_N\) satisfy
Assumption~\ref{assumption12}.
Then the two denominators in \eqref{eq:hajek-interference} are
nonzero with probability tending to one. Moreover, the conditional
law of
\(
    \sqrt N\bigl(\widehat\tau_N-\tau_N\bigr)
\)
given \(A_N\) converges weakly in probability to
\(
    \mathcal N\left(
        0,
        \kappa_2+\kappa_1^2(q_\beta-1)
    \right).
\)
Consequently,
\[
    \sqrt N\bigl(\widehat\tau_N-\tau_N\bigr)
    \xrightarrow{d}
    \mathcal N\left(
        0,
        \kappa_2+\kappa_1^2(q_\beta-1)
    \right).
\]
\end{thm}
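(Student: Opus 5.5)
The plan is to reduce $\sqrt N(\widehat\tau_N-\tau_N)$, up to an $o_P(1)$ error, to the linear statistic $N^{-1/2}\sum_{i}\sigma_iL_i$. This statistic is then split into the normalized magnetization $M_N\coloneq\sigma_N(\mathbf 1)$, which is handled by Theorem~\ref{thm:isingWeakConv}, and a sum of independent terms given $\bs$.

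\emph{Denominators.} Write $\sum_i D_i=\tfrac N2+\tfrac{\sqrt N}{2}M_N$ and $\sum_i(1-D_i)=\tfrac N2-\tfrac{\sqrt N}{2}M_N$. By Theorem~\ref{thm:isingWeakConv} with $f\equiv1$, the conditional law of $M_N$ given $A_N$ is tight in probability, with limit $\mathcal N(0,q_\beta)$. Hence both denominators are of order $N$, and in particular nonzero, with probability tending to one.

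\emph{Linearization of the outcomes.} The peer fraction of unit $i$ is
\[
x_i=\frac{1}{N-1}\sum_{j\ne i}D_j=\frac12+\frac{M_N}{2\sqrt N}+O(N^{-1}).
\]
Set $a_i=F_i(1,\tfrac12)$, $b_i=F_i(0,\tfrac12)$ and $c_i^d=\partial_2F_i(d,\tfrac12)$. A second-order Taylor expansion with the uniform $C^2$ bound gives
\[
Y_i=F_i(D_i,\tfrac12)+c_i^{D_i}\,\frac{M_N}{2\sqrt N}+O\!\left(\frac{1+M_N^2}{N}\right).
\]
Substitute this into the two ratios and expand $1/\bigl(\tfrac N2\pm\tfrac{\sqrt N}2M_N\bigr)$. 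For the treated mean this yields
\[
\sqrt N\Bigl(\tfrac{\sum D_iY_i}{\sum D_i}-\bar a\Bigr)=\frac{1}{\sqrt N}\sum_i\sigma_i(a_i-\bar a)+\frac{M_N}{2}\cdot\frac{2}{N}\sum_i D_ic_i^1+o_P(1),
\]
and the control mean satisfies the analogous expansion with a minus sign on the $\sigma$-sum.

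In the exposure term I replace $\tfrac2N\sum_iD_ic_i^1$ by $\mathbb E c_1^1$. This uses the law of large numbers for the i.i.d.\ $c_i^1$, independent of $\bs$, together with the bound $\tfrac1N\sum_i\sigma_ic_i^1=O_P(N^{-1/2})$. That bound follows by writing $c_i^1=\mathbb Ec^1+(c_i^1-\mathbb Ec^1)$: the first part gives $\mathbb E c^1\,M_N/\sqrt N$, and given $\bs$ the second part is a centered sum of independent bounded terms.

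For the target $\tau_N$, the same Taylor expansion gives
\[
\tau_N=\bar a-\bar b+\frac1N\sum_i\bigl(c_i^1-c_i^0\bigr)\frac{\mathbb E[M_N\mid A_N]}{2\sqrt N}+O\!\left(\frac{\mathbb E[M_N^2\mid A_N]+1}{N}\right).
\]
Here $\mathbb E[M_N\mid A_N]=0$ by spin-flip symmetry of the Gibbs measure. I expect the uniform-integrability input needed to control $\mathbb E[M_N^2\mid A_N]$ to be available from the moment bounds used for Theorem~\ref{thm:donsker}; this makes $\sqrt N(\tau_N-\bar a+\bar b)=o_P(1)$.

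Collecting terms gives
\[
\sqrt N(\widehat\tau_N-\tau_N)=\frac{1}{\sqrt N}\sum_i\sigma_i(R_i-\bar R)+Q\,M_N+o_P(1).
\]
Since $(\bar R-\mathbb E R_1)M_N=o_P(1)$, the right side equals $N^{-1/2}\sum_i\sigma_iL_i+o_P(1)$.

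\emph{Limit law.} Decompose
\[
\frac{1}{\sqrt N}\sum_i\sigma_iL_i=\kappa_1M_N+G_N,\qquad G_N\coloneq\frac{1}{\sqrt N}\sum_i\sigma_i(L_i-\kappa_1).
\]
Conditionally on $(A_N,\bs)$, $G_N$ is a sum of independent bounded centered variables with total variance exactly $\kappa_2-\kappa_1^2$, because $\sigma_i^2=1$. A third-order Taylor bound on $\prod_i\varphi(t\sigma_i/\sqrt N)$, where $\varphi$ is the characteristic function of $L_1-\kappa_1$, gives
\[
\mathbb E\bigl[e^{\mathrm itG_N}\mid A_N,\bs\bigr]\to e^{-t^2(\kappa_2-\kappa_1^2)/2}
\]
uniformly in $\bs$. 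Therefore
\[
\mathbb E\bigl[e^{\mathrm i(s\kappa_1M_N+tG_N)}\mid A_N\bigr]=\mathbb E\bigl[e^{\mathrm is\kappa_1M_N}\mid A_N\bigr]e^{-t^2(\kappa_2-\kappa_1^2)/2}+o(1).
\]
Combined with Theorem~\ref{thm:isingWeakConv}, the limit is the sum of the independent Gaussians $\mathcal N(0,\kappa_1^2q_\beta)$ and $\mathcal N(0,\kappa_2-\kappa_1^2)$, which has the stated variance $\kappa_2+\kappa_1^2(q_\beta-1)$. Convergence in probability of the conditional laws follows because the $o_P(1)$ remainders are negligible in conditional probability, by Markov's inequality. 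The unconditional statement then follows by dominated convergence of conditional characteristic functions.

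\emph{Main obstacle.} The delicate step is the bookkeeping of the remainders in the linearization: the nonlinear ratio expansions and the $O(M_N^2/N)$ Taylor terms must be shown to be $o_P(1)$ after the $\sqrt N$ scaling, uniformly enough to pass to conditional laws. I would first try to control them using only the tightness of $M_N$, and invoke conditional second-moment bounds only for $\tau_N$, where an expectation rather than a probability bound is needed.
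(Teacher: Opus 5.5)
Your proposal is correct and follows essentially the same route as the paper. Both reduce $\sqrt N(\widehat\tau_N-\tau_N)$ to $N^{-1/2}\sum_i L_i\sigma_i+o_P(1)$ through the ratio and second-order Taylor expansions, kill the first-order term of $\tau_N$ by spin-flip symmetry, control its second-order term with a conditional second-moment bound, and factor the conditional characteristic function through $\sigma_N(\mathbf 1)$ given $(A_N,\bs)$ before invoking Theorem~\ref{thm:isingWeakConv}. The bound you anticipated for $\mathbb E[\sigma_N(\mathbf 1)^2\mid A_N]$ is exactly what the paper uses, namely \eqref{reqpr66} from the proof of Lemma~\ref{lem:fourthmomentinterpolated}; the conditional moment convergence \eqref{condmomconv44} in the proof of Theorem~\ref{thm:isingWeakConv} would also suffice.
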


Theorem \ref{thm:hajek-graphon-ising} is proved in Section \ref{cstwproof} of the appendix.

\begin{remark}[Erd\H{o}s--R\'enyi and Curie--Weiss assignments]
If \(W\equiv1\), then \(T_W\mathbf 1=\mathbf 1\), and hence
\[
    q_\beta
    =
    \left\langle
        \mathbf 1,(I-\beta T_W)^{-1}\mathbf 1
    \right\rangle
    =
    \frac{1}{1-\beta}~.
\]
Therefore, for every sparsity sequence \(\theta_N\) satisfying
Assumption~\ref{assumption12}\textup{(3)}, the limiting variance in
Theorem~\ref{thm:hajek-graphon-ising} becomes
\begin{equation}\label{limasmvar66}
    \kappa_2
    +
    \kappa_1^2\frac{\beta}{1-\beta}~.
\end{equation}
Thus, the same asymptotic variance holds for Erd\H{o}s--R\'enyi
Ising treatment assignment throughout the sparsity regime considered
in this paper. In the special case \(\theta_N=1\), the assignment
graph is complete and the treatment-assignment law is Curie--Weiss;
\eqref{limasmvar66} then recovers exactly the high-temperature
variance in
\cite[Theorem~3.1, Eq.~(6)]{cattaneo2025robust}, specialized to the
complete outcome-interference network.
\end{remark}

\section{Proofs of the Main Results}\label{sec:proofmain2}
In this section, we prove the main results in this paper.

\subsection{Proof of Theorem \ref{thm:isingWeakConv}}\label{proofthmisingWeakConv}
To begin with, assume that $k=1$ and for ease of notation, denote $f_1$ simply by $f$.
Since
\(
    \beta\|T_W\|_{\mathrm{op}}<1,
\)
we may choose $t_0>0$ such that
\begin{equation}\label{eq:t0-absolute-kernel}
 \beta\|T_W\|_{\mathrm{op}}
    +
    2t_0\|f\|_2^2
    <1.   
\end{equation}
Throughout the remainder of the one-dimensional argument, fix an
arbitrary $t\in(-t_0,t_0)$. 
Now, the moment-generating function (m.g.f.) of $\sigma_N(f)^2$ conditional on $A_N$ at the point $t$ can be expressed as: 
\begin{equation*}
    \psi_N(t) = \frac{1}{2^N} \sum_{\bs\in \{-1,1\}^N} \frac{e^{t\sigma_N(f)^2 + \frac{\beta}{2N\theta_N}\bs^\top A_N \bs}}{Z_N(\beta)}=\frac{1}{2^N} \sum_{\bs\in \{-1,1\}^N} \frac{e^{t\sigma_N(f)^2} T(\bs)}{\hat{Z}_N(\beta)},
\end{equation*}
where \[
    T(\bs) \coloneqq \exp \left(
    \gamma \sum_{i,j=1}^N A_N(i,j)\,\sigma_i \sigma_j
    - \gamma \xi_N
    - \gamma^2 \zeta_N
    \right),\]

\[\quad \xi_N \coloneqq \sum_{i=1}^N A_N(i,i)
    , \quad
    \zeta_N \coloneqq \sum_{i,j=1}^N A_N(i,j),\quad \gamma \coloneqq \frac{\beta}{2N\theta_N}
\]
and
\begin{equation}\label{def:znhatising}
    \hat{Z}_N(\beta) \coloneqq \frac{1}{2^N} \sum_{\bs \in \{-1,+1\}^N} T(\bs)
    = Z_N(\beta)\, \exp\!\left( -\gamma \xi_N - \gamma^2 \zeta_N \right).
\end{equation}

Let us now define: $\bldf \coloneqq (f\left(\frac{1}{N}\right),f\left(\frac{2}{N}\right),\ldots, f(1))^\top$ and
\begin{equation}\label{def:znhatfising}
 \hat{Z}^f_N(\beta,t) \coloneqq \frac{1}{2^N} \sum_{\bs\in \{-1,+1\}^N} e^{\frac{t}{N}\bs^T \bldf \bldf^T\bs} T(\bs).
\end{equation}
In this notation, we have:
\begin{equation*}
    \psi_N(t) = \frac{\hat{Z}^f_N(\beta,t)}{\hat{Z}_N(\beta)}.
\end{equation*}
By Lemma \ref{var_sum}, which applies for every
$t\in(-t_0,t_0)$ by \eqref{eq:t0-absolute-kernel}, we have: 
\begin{equation}\label{psinreat66}
    \psi_N(t) = \frac{\E\left[\hat{Z}^f_N(\beta,t)\right]}{\E\left[\hat{Z}_N(\beta)\right]} (1+o_P(1)).
\end{equation}
Now, by Lemma \ref{lem:expect2}, we have:
\begin{multline*}
    \E\left[\hat{Z}^f_N(\beta,t)\right]\\ = \frac{1}{2^N}\sum_\bs \exp\left(-\frac{\beta^2}{4N^2\theta_N}\sum_{i=1}^{N}W\left(\frac{i}{N},\frac{i}{N}\right)
    -\frac{\beta^2}{2N^2}\sum_{i> j}W^2\left(\frac{i}{N},\frac{j}{N}\right)-\frac{\beta^4}{12 N^4\theta_N^3}\sum_{i> j} W\left(\frac{i}{N},\frac{j}{N}\right)\right.\\+o\left(\frac 1{N\theta_N}\right)
    \left.+\frac{\beta}{N}\sum_{i> j}W\left(\frac{i}{N},\frac{j}{N}\right)\sigma_i \sigma_j+ \frac{t}{N}\sum_{i,j}f\left(\frac{i}{N}\right)f\left(\frac{j}{N}\right)\sigma_i \sigma_j + O(Q_N(\bs))\right).
\end{multline*}
where $Q_N(\bs) \coloneqq \frac{1}{N^3 \theta_N^2}\left|\sum_{i>j}W\left(\frac{i}{N},\frac{j}{N}\right)\sigma_i \sigma_j\right|$.
Therefore again by Lemma \ref{lem:expect2} and \eqref{psinreat66}, we have: 
\begin{align}\label{eq:psiNlimit}
    &\psi_N(t)\nonumber\\ =& \frac{\frac{1}{2^N}\sum_\bs \exp\left(\frac{\beta}{N}\sum_{i> j}W\left(\frac{i}{N},\frac{j}{N}\right)\sigma_i \sigma_j+ \frac{t}{N}\sum_{i,j}f\left(\frac{i}{N}\right)f\left(\frac{j}{N}\right)\sigma_i \sigma_j + O(Q_N(\bs))\right)}{\frac{1}{2^N}\sum_\bs \exp\left(\frac{\beta}{N}\sum_{i> j}W\left(\frac{i}{N},\frac{j}{N}\right)\sigma_i \sigma_j+ O(Q_N(\bs))\right)}(1+ o_P(1))            \nonumber \\
    =              & e^{\frac{t}{N}\sum_{i=1}^{N}f(\frac{i}{N})^2}\frac{\E_\mu \exp\left(\frac{\beta}{N}\sum_{i> j}W\left(\frac{i}{N},\frac{j}{N}\right)\sigma_i \sigma_j+ \frac{2t}{N}\sum_{i>j}f\left(\frac{i}{N}\right)f\left(\frac{j}{N}\right)\sigma_i \sigma_j + O(Q_N(\bs)) \right)}{\E_\mu \exp\left(\frac{\beta}{N}\sum_{i> j}W\left(\frac{i}{N},\frac{j}{N}\right)\sigma_i \sigma_j+ O(Q_N(\bs)) \right)}(1 + o_P(1)),
\end{align}
where $\mu$ denotes the uniform measure on $\{-1,+1\}^N$.

Next, for a symmetric kernel $K$, define
\begin{equation}\label{circ22}
    Y_{f,N} \coloneqq \exp\left(\frac{1}{N}\sum_{i> j}K\left(\frac{i}{N},\frac{j}{N}\right)\sigma_i \sigma_j + O(Q_N(\bs))\right).
\end{equation}

Now, let $ K(x,y) \coloneqq \beta W(x,y) + 2t f(x) f(y)$. Set
\(
    \rho_t
    \coloneq
    \beta\|T_W\|_{\mathrm{op}}
    +
    2|t|\|f\|_2^2.
\)
Since $W\geq0$, we have 
\[
    |K(x,y)|
    \leq
    \beta W(x,y)
    +
    2|t|\,|f(x)||f(y)|.
\]
Consequently,
\begin{equation}\label{eq:absolute-kernel-operator-bound}
    \|T_{|K|}\|_{\mathrm{op}}
    \leq
    \beta\|T_W\|_{\mathrm{op}}
    +
    2|t|\|f\|_2^2
    =
    \rho_t
    <1,
\end{equation}
where the final inequality follows from
\eqref{eq:t0-absolute-kernel} and $|t|<t_0$.
Lemma \ref{circlem} now gives   
\begin{equation}
    \label{Y_fNlimit}
    \lim_{N \to \infty} \E_\mu Y_{f,N} =  \frac{1}{\sqrt{\dett(I - T_K)}}~.
\end{equation}
Also, note that with $Y_N :=Y_{0,N}$, we have:
\begin{equation}
    \label{Y_Nlimit}
    \lim_{N \to \infty} \E_\mu Y_{N} = \prod_{i=1}^{\infty} \frac{\exp \left(-\frac{\beta \lambda_{i}(W)}{2}\right)}{\sqrt{1-\beta \lambda_{i}(W)}}= \frac{1}{\sqrt{\dett(I - \beta T_W)}}
\end{equation}
Putting \eqref{eq:psiNlimit}, \eqref{Y_fNlimit} and \eqref{Y_Nlimit} together, we thus have:
\begin{equation*}
   \psi_N(t) =   e^{\frac{t}{N}\sum_{i=1}^{N}f(\frac{i}{N})^2} \frac{\E_\mu Y_{f,N}}{\E_\mu Y_{N}} (1+o_P(1)) = e^{t\int f^2}\sqrt{\frac{\dett(I-\beta T_W)}{\dett(I-\beta T_W-2t\opb{f}{f})}} + o_P(1).
\end{equation*}
Therefore, by Lemma \ref{lem:detfact}, we have:
\begin{equation}\label{psineqconv6}
    \psi_N(t) \xrightarrow{P} \frac{1}{\sqrt{(1-2t\ipb{f}{(I-\beta T_W)^{-1}f})}}.
\end{equation}
The right hand side of \eqref{psineqconv6} can be easily identified as the m.g.f. of 
 $\ipb{f}{(I-\beta T_W)^{-1}f}Z^2$ where $Z$ is a standard normal random variable, and hence, by Lemma \ref{l2ratio762}, we conclude that 
 \begin{equation*}
    \sigma_N(f)^2 \xrightarrow{d} \ipb{f}{(I-\beta T_W)^{-1}f}Z^2\quad\text{and} \quad  \E(\sigma_N(f)^{2k}| A_N) \xrightarrow{P} \E\left(\sqrt{\ipb{f}{(I-\beta T_W)^{-1}f}}Z\right)^{2k}
\end{equation*}
for all $k\in \N$. It is also easy to see that the conditional distribution of $\sigma_N(f)$ given $A_N$ is symmetric about 0 (since conditional on $A_N$, $\bs \stackrel{D}{=} -\bs$). Hence,
$\E[\sigma_N(f)^{2k+1}|A_N] = 0$.
Therefore for all $k \in \N$,
\begin{equation}\label{condmomconv44}
    \E(\sigma_N(f)^{k}|A_N) \xrightarrow{P} \E \left(\sqrt{\ipb{f}{(I-\beta T_W)^{-1}f}}Z\right)^{k}.
\end{equation}

Next, set
\[
    v_f\coloneqq \left\langle f,(I-\beta T_W)^{-1}f\right\rangle,
    \qquad
    \nu_{N,f}\coloneqq \mathcal L(\sigma_N(f)\mid A_N),
    \qquad \text{and}\qquad
    \nu_f\coloneqq \mathcal N(0,v_f).
\]
For \(r\ge1\) and \(\nu,\rho\in\mathcal P(\mathbb R^r)\) (the set of all probability measures on $\R^r$), define
\[
    d_{\mathrm{BL},r}(\nu,\rho)
    \coloneqq
    \sup_{\varphi\in\mathrm{BL}_1(\mathbb R^r)}
    \left|
    \int \varphi\,d\nu-\int \varphi\,d\rho
    \right|,
\]
where
\[
    \mathrm{BL}_1(\mathbb R^r)
    \coloneqq
    \left\{
    \varphi:\mathbb R^r\to\mathbb R:
    \|\varphi\|_\infty\le1,\ 
    |\varphi(x)-\varphi(y)|\le \|x-y\|_2
    \ \forall x,y\in\mathbb R^r
    \right\}.
\]
The metric \(d_{\mathrm{BL},r}\) metrizes weak convergence on
\(\mathcal P(\mathbb R^r)\); see \cite[Theorem 1.12.4]{van1996weak}.

We first prove one-dimensional conditional convergence. Since the
conditional moment convergence \eqref{condmomconv44} holds in probability for every moment
order, it follows from a diagonal argument that every subsequence has a further subsequence, say \(N_m\), along which,
almost surely,
\[
    \int x^q\,d\nu_{N_m,f}(x)
    \rightarrow
    \int x^q\,d\nu_f(x)
    \qquad \text{for all}\quad q\ge1.
\]
The Gaussian law being moment-determinate, \(\nu_{N_m,f}\Rightarrow\nu_f\) almost surely; see
\cite[Theorem 30.2]{billingsley1995probability}. Equivalently,
\(d_{\mathrm{BL},1}(\nu_{N_m,f},\nu_f)\to0\) almost surely. By the subsequence
criterion for convergence in probability, we thus have:
\[
    d_{\mathrm{BL},1}(\nu_{N,f},\nu_f)\xrightarrow{P}0.
\]

Now let
\[
    \bm Y_N\coloneqq
    \big(\sigma_N(f_1),\ldots,\sigma_N(f_k)\big),
    \qquad
    \nu_N\coloneqq \mathcal L(\bm Y_N\mid A_N),
    \qquad
    \nu\coloneqq \mathcal N_k(\boldsymbol 0,\Sigma),
\]
where
\[
    \Sigma_{ij}
    =
    \left\langle f_i,(I-\beta T_W)^{-1}f_j\right\rangle .
\]
For \(\bm a=(a_1,\ldots,a_k)\in\mathbb R^k\), write
\(f_{\bm a} \coloneqq \sum_{j=1}^k a_jf_j\) and
\(\pi_{\bm a}(\bm x)\coloneqq \bm a^\top \bm x\). Then
\(\pi_{\bm a}(\bm Y_N)=\sigma_N(f_{\bm a})\), and the one-dimensional result applied to
\(f_{\bm a}\) gives
\[
    d_{\mathrm{BL},1}
    \left(
    \nu_N\circ\pi_{\bm a}^{-1},
    \nu\circ\pi_{\bm a}^{-1}
    \right)
    \xrightarrow{P}0,
    \qquad \bm a\in\mathbb R^k.
\]

We claim that \(d_{\mathrm{BL},k}(\nu_N,\nu)\to0\) in probability. Towards this, start with an arbitrary subsequence. By diagonal extraction over the
countable set \(\mathbb Q^k\), there is a further subsequence
\(N_m\), such that, almost surely,
\[
    \nu_{N_m}\circ\pi_{\bm a}^{-1}
    \xrightarrow{d}
    \nu\circ\pi_{\bm a}^{-1}
    \qquad \text{for all}\quad \bm a\in\mathbb Q^k.
\]
Fix a realization for which this holds. The convergence for the coordinate
vectors \(\bm e_1,\ldots, \bm e_k\in\mathbb Q^k\) implies tightness of
\(\nu_{N_m}\). Now, if \(Q\) is any subsequential weak limit of $\nu_{N_m}$,
then by the continuous mapping theorem,
\[
    Q\circ\pi_{\bm a}^{-1}
    =
    \nu\circ\pi_{\bm a}^{-1},
    \qquad \bm a\in\mathbb Q^k.
\]
Thus the characteristic functions of \(Q\) and \(\nu\) agree on
\(\mathbb Q^k\), and hence, by continuity, on all of \(\mathbb R^k\).
Therefore \(Q=\nu\). Hence every subsequential weak limit of $\nu_{N_m}$ is \(\nu\), implying that
\(\nu_{N_m}\Rightarrow\nu\). Since the original
subsequence was arbitrary, we have
\[
    d_{\mathrm{BL},k}(\nu_N,\nu)\xrightarrow{P}0.
\]
This is precisely the weak convergence in probability of
\(\mathcal L(\bm Y_N\mid A_N)\) to \(\mathcal N_k(\boldsymbol 0,\Sigma)\), as claimed in the statement of Theorem \ref{thm:isingWeakConv}.

Finally, the unconditional convergence follows from the tower property. More precisely, for
every bounded Lipschitz \(\varphi:\mathbb R^k\to\mathbb R\),
\[
    \mathbb E\varphi(\bm Y_N)
    =
    \mathbb E\left[
    \mathbb E\{\varphi(\bm Y_N)\mid A_N\}
    \right],
\]
and the inner conditional expectation converges in probability to
\(\mathbb E\varphi(\bm Y)\), where \(\bm Y\sim\nu\). By the dominated convergence theorem, we thus have:
\[
    \mathbb E\varphi(\bm Y_N)\rightarrow \mathbb E\varphi(\bm Y),
\]
which is true for all $\varphi \in \mathrm{BL}_1(\R^k)$. Thus \(\bm Y_N\xrightarrow{d}\mathcal N_k(\boldsymbol 0,\Sigma)\), completing the
proof.

\subsection{Proof of Proposition \ref{lem:donsker_fdd}}\label{sec:SNfindimcv8}
For \(1\le r\le k\), let us set
\(
    f_r=\mathbf 1_{[0,t_r]} .
\)
Then \(f_r\) is Riemann integrable and
\(
    S_N(t_r)
    =
    \sigma_N(f_r).
\)
Let \(a_1,\ldots,a_k\in\mathbb R\), and define
\[
    f=\sum_{r=1}^k a_r f_r .
\]
By the conditional form of Theorem \ref{thm:isingWeakConv}, we have
\[
    \sum_{r=1}^k a_r S_N(t_r)
    =
    \sigma_N(f)
    \xrightarrow{d}
    \mathcal N\left(
    0,
    \left\langle
    f,(I-\beta T_W)^{-1}f
    \right\rangle
    \right)
\]
in probability, conditionally on \(A_N\). Moreover,
\begin{equation}\label{covkerverif88}
    \left\langle
    f,(I-\beta T_W)^{-1}f
    \right\rangle
    =
    \sum_{r,s=1}^k
    a_r a_s
    \left\langle
    f_r,(I-\beta T_W)^{-1}f_s
    \right\rangle .
\end{equation}
By Lemma \ref{constructXt}, the covariance kernel of the process \(X\) is
\[
    K(u,v)
    =
    \left\langle
    \mathbf 1_{[0,u]},
    (I-\beta T_W)^{-1}
    \mathbf 1_{[0,v]}
    \right\rangle .
\]
Thus the limiting variance \eqref{covkerverif88} is exactly
\[
    \operatorname{Var}\left(
    \sum_{r=1}^k a_r X(t_r)
    \right).
\]
Proposition \ref{lem:donsker_fdd} now follows from the Cramér--Wold device.

\subsection{Proof of Theorem \ref{thm:donsker}}\label{proof:donsker}

Let \(\widetilde S_N\) denote the linear interpolation of \(S_N\) on the
grid \(\{0,1/N,\ldots,1\}\):
\begin{equation}\label{interpolated66}
    \widetilde S_N(t)
    =
    \frac{1}{\sqrt N}
    \sum_{i=1}^N
    \bigl((Nt-i+1)_+\wedge 1\bigr)\sigma_i,
    \qquad 0\le t\le 1 .
\end{equation}
Define the following (possibly random) elements of $\mathcal{P}(D[0,1])$:
\[
    \mu_N:=\mathcal L(S_N| A_N),\qquad
    \widetilde\mu_N:=\mathcal L(\widetilde S_N| A_N)
    \qquad \text{and}\qquad
    \mu:=\mathcal L(X).
\]
Now, let us take \(X\) to be the continuous modification furnished by
Lemma \ref{holderX}. Since $\widetilde S_N$ and $X$ have continuous paths,
$\widetilde\mu_N$ and $\mu$ may also be viewed as probability
measures on $C[0,1]$.
By Lemma \ref{lem:fourthmomentinterpolated}, there is an event
\(\Omega_0 \in \sigma(\{A_N\}_{N\ge 1})\) with \(\mathbb P(\Omega_0)=1\) such that, for every
\(\omega\in\Omega_0\), there exists \(C_\omega<\infty\) satisfying
\[
    \mathbb E\!\left[
        \left(\widetilde S_N(t)-\widetilde S_N(s)\right)^4
        \,\middle| A_N
    \right] \omega
    \le C_\omega (t-s)^2
\]
for all \(N\ge1\) and \(0\le s<t\le1\). Since
\(\widetilde S_N(0)=0\), Kolmogorov's tightness criterion
\cite[Theorem XIII.1.8]{revuzYor1999}, applied for each fixed
\(\omega\in\Omega_0\), shows that
\(\{\widetilde\mu_N \omega :N\ge1\}\) is tight in \(C[0,1]\) equipped with the supremum norm topology.

Now consider an arbitrary subsequence. By Proposition
\ref{lem:donsker_fdd} and a diagonal extraction over the countable collection of all finite tuples
of rational times (i.e. all $\bm t \in \cup_{r\ge 1} (\mathbb{Q} \cap [0,1])^r$), there exists an event $\Omega_1$ with probability $1$ and a further subsequence
\(\{N_j\}_{j\ge 1}\),
such that for every $\omega \in \Omega_1$, \(r\ge1\) and
\(t_1,\ldots,t_r\in\mathbb Q\cap[0,1]\),
\begin{equation}\label{fndmcv8288}
    \mathcal L\!\left(
        S_{N_j}(t_1),\ldots,S_{N_j}(t_r)
        \,\middle| A_{N_j}
    \right) \omega
    \xrightarrow{d}
    \mathcal L\!\left(X(t_1),\ldots,X(t_r)\right).
\end{equation}
Moreover, it follows from the definition of $\widetilde S_N$ that
\[
    \|S_N-\widetilde S_N\|_\infty\le N^{-1/2},
\]
so the same finite-dimensional convergence \eqref{fndmcv8288} holds with \(S_{N_j}\)
replaced by \(\widetilde S_{N_j}\).

Fix \(\omega\in\Omega_0\cap\Omega_1\). The sequence
\(\{\widetilde\mu_{N_j} \omega\}\) is tight in \(C[0,1]\). If \(Q\)
is any of its subsequential weak limits, continuity of the finite-dimensional coordinate-projection
maps on \(C[0,1]\) shows that \(Q\) and \(\mu\) have the same
finite-dimensional marginals at all rational times. Since these
finite-dimensional coordinate maps generate the Borel \(\sigma\)-field on \(C[0,1]\),
we have \(Q=\mu\). Consequently,
\[
    \widetilde\mu_{N_j}\omega\xrightarrow{d}\mu
    \qquad\text{in }C[0,1].
\]
Thus, every subsequence admits a further subsequence along which this
convergence holds almost surely. By the subsequence criterion in the
metrizable space $\mathcal P(C[0,1])$, we conclude that
\begin{equation}\label{eq:interpolated-C-convergence}
    \mathcal L(\widetilde S_N| A_N)
    \xrightarrow{d}
    \mathcal L(X)
\end{equation}
in probability as probability measures on $C[0,1]$.
Since the canonical inclusion
\[
    C[0,1]\rightarrow D[0,1]
\]
is continuous, the same convergence also holds as probability
measures on $D[0,1]$.

Now, let \(d_D:=d_{\mathrm{Sk}}\wedge1\), and define the associated
bounded-Lipschitz metric on \(\mathcal P(D[0,1])\) by
\[
d_{\mathrm{BL},D}(\nu,\rho)
:=
\sup_{F\in\mathrm{BL}_1(D[0,1])}
\left|
\int_{D[0,1]} F\,d\nu
-
\int_{D[0,1]} F\,d\rho
\right|,
\]
where
\[
\mathrm{BL}_1(D[0,1])
:=
\left\{
F:D[0,1]\to\mathbb R:
\|F\|_\infty\le1,\ 
|F(x)-F(y)|\le d_D(x,y)
\text{ for all }x,y\in D[0,1]
\right\}.
\]
Note that:
\begin{eqnarray*}
    d_{\mathrm{BL},D}(\mu_N,\widetilde\mu_N) &=& \sup_{F\in\mathrm{BL}_1(D[0,1])} \left|\e[F(S_N)|A_N] - \e[F(\widetilde S_N)|A_N] \right|\\&\le& \sup_{F\in\mathrm{BL}_1(D[0,1])}  \e\left[|F(S_N) -F(\widetilde S_N)|\Big|A_N\right]\\&\le&
\mathbb E\!\left[d_D(S_N,\widetilde S_N)\Big| A_N\right]\\ &\le& \mathbb E\!\left[\|S_N- \widetilde S_N\|_\infty\Big| A_N\right]
    \le N^{-1/2}.
\end{eqnarray*}
Since $d_{\mathrm{BL},D}$ metrizes weak convergence on \(\mathcal P(D[0,1])\) \cite[Theorem~1.12.4]{van1996weak}, it follows that
\[
    d_{\mathrm{BL},D}(\mu_N,\mu)\xrightarrow{P}0,
\]
which proves the asserted conditional weak convergence statement of Theorem \ref{thm:donsker}.

Finally, the unconditional convergence follows from the tower property. More precisely, for
every bounded continuous \(F:D[0,1]\to\mathbb R\),
\[
    \mathbb E F(S_N)
    =
    \mathbb E\left[
    \mathbb E\{F(S_N)\mid A_N\}
    \right],
\]
and the inner conditional expectation converges in probability to
\(\mathbb E F(X)\). Since $$|\mathbb E\{F(S_N)\mid A_N\}| \le \|F\|_\infty,$$ the dominated convergence theorem gives:
\[
    \mathbb E F(S_N)\rightarrow \mathbb E F(X),
\]
which is true for all bounded continuous \(F:D[0,1]\to\mathbb R\). Hence,
\(
    S_N\xrightarrow{d}X
\) in $D[0,1]$,
completing the proof of Theorem \ref{thm:donsker}.

\subsection{Proof of Theorem \ref{thm:process_class}}\label{proff:thmproclass222}
To begin with, let us equip $C(\mathcal C)$ with the supremum norm
\[
\|z\|_{\infty}
\coloneq
\sup_{f\in\mathcal C}|z(f)|.
\]
Since $(\mathcal C,d_{AC})$ is a compact metric space,
it is both complete and separable. It follows from Lemmas 3.97 and 3.99 in \cite{aliprantis2006infinite} that $C(\mathcal C)$, equipped with the supremum norm, is Polish.
Moreover, 
\[
M_{\mathcal C}
\coloneq
\sup \left\{
|f(1)|+\|f'\|_{L^1}: f\in\mathcal C\right\}
=
\sup_{f\in\mathcal C}d_{AC}(f,0)
<\infty
\]
satisfies that for every $x,y\in C[0,1]$,
\[
\|\Psi(x)-\Psi(y)\|_{\infty}
\le
M_{\mathcal C}\|x-y\|_\infty,
\]
and hence $\Psi:C[0,1]\to C(\mathcal C)$ is continuous.

Let $\widetilde S_N$ be the linear interpolation of $S_N$ defined in
\eqref{interpolated66}, and set
\(
\widetilde{\mathbb G}_N
\coloneq
\Psi(\widetilde S_N).
\)
By \eqref{eq:interpolated-C-convergence},
$\mathcal L(\widetilde S_N|
A_N)$ converges weakly in probability
to $\mathcal L(X)$ as probability measures on $C[0,1]$. Therefore, by
the continuous mapping theorem
 and the subsequence
criterion for convergence in probability,
$\widetilde{\nu}_N := \mathcal L(\widetilde{\mathbb G}_N| A_N)$ converges to
$\nu := \mathcal L(\mathbb G)$
weakly in probability on $C(\mathcal C)$.

We now compare $\mathbb G_N$ and
$\widetilde{\mathbb G}_N$. For $f\in\mathcal C$, discrete summation by
parts and the absolute continuity of $f$ give:
\begin{align*}
\sigma_N(f)
&=
\sum_{i=1}^N
f\left(\frac{i}{N}\right)
\left\{
S_N\left(\frac{i}{N}\right)
-
S_N\left(\frac{i-1}{N}\right)
\right\} \\
&=
f(1)S_N(1)
-
\sum_{i=1}^{N-1}
S_N\left(\frac{i}{N}\right)
\left\{
f\left(\frac{i+1}{N}\right)
-
f\left(\frac{i}{N}\right)
\right\} \\
&=
f(1)S_N(1)-\int_0^1 S_N(t)f'(t)\,\mathrm dt.
\end{align*}
Since $S_N(1)=\widetilde S_N(1)$, it follows that
\begin{eqnarray*}
\|\mathbb G_N-\widetilde{\mathbb G}_N\|_{\infty}
&=&
\sup_{f\in\mathcal C}
|\mathbb G_N(f)-\widetilde{\mathbb G}_N(f)|\\
&=&
\sup_{f\in\mathcal C}
|\sigma_N(f)-\Psi(\widetilde S_N)(f)|\\
&=&
\sup_{f\in\mathcal C}
\left|
\int_0^1
\bigl(\widetilde S_N(t)-S_N(t)\bigr)f'(t)\,\mathrm dt
\right|\\
&\le&
\left(\sup_{f\in\mathcal C}\|f'\|_{L^1}\right)
\|S_N-\widetilde S_N\|_\infty\\
&\le&
M_{\mathcal C}\|S_N-\widetilde S_N\|_\infty
\le
\frac{M_{\mathcal C}}{\sqrt N}.
\end{eqnarray*}

Next, define
\(
\nu_N\coloneq\mathcal L(\mathbb G_N| A_N)\), and for $z, w\in C(\mathcal{C})$, let
\(
d_{\mathcal C}^{\ast}(z,w)
\coloneq
\|z-w\|_{\infty}\wedge 1\).
Define the bounded-Lipschitz metric on
$\mathcal P(C(\mathcal C))$ by
\[
d_{\mathrm{BL}}(\nu,\rho)
\coloneq
\sup_{F\in\mathrm{BL}_1(C(\mathcal C))}
\left|
\int_{C(\mathcal C)}F\,\mathrm d\nu
-
\int_{C(\mathcal C)}F\,\mathrm d\rho
\right|,
\]
where
\[
\mathrm{BL}_1(C(\mathcal C))
\coloneq
\left\{
F:C(\mathcal C)\to\mathbb R:
\|F\|_\infty\le 1,\ 
|F(z)-F(w)|
\le
d_{\mathcal C}^{\ast}(z,w)
\ \text{for all }z,w\in C(\mathcal C)
\right\}.
\]
This metric
metrizes weak convergence on $\mathcal P(C(\mathcal C))$
\cite[Theorem~1.12.4]{van1996weak}. Then
\[
d_{\mathrm{BL}}(\nu_N,\widetilde\nu_N)
\le
\mathbb E\left[
\|\mathbb G_N-\widetilde{\mathbb G}_N\|_{\infty}\wedge1
\,\middle| A_N
\right]
\le
\frac{M_{\mathcal C}}{\sqrt N}.
\]
Since $\widetilde\nu_N\rightarrow\nu$ weakly in probability, the
triangle inequality yields
\[
d_{\mathrm{BL}}(\nu_N,\nu)\xrightarrow{P}0,
\]
which proves the asserted conditional weak convergence.

Finally, the unconditional convergence follows from the tower
property. Indeed, for every bounded continuous
$F:C(\mathcal C)\to\mathbb R$,
\[
\mathbb E F(\mathbb G_N)
=
\mathbb E\left[
\int F\,\mathrm d\nu_N
\right].
\]
The inner integral converges in probability to
$\int F\,\mathrm d\nu=\mathbb E F(\mathbb G)$ and is bounded in
absolute value by $\|F\|_\infty$. The dominated convergence theorem therefore gives
\[
\mathbb E F(\mathbb G_N)
\rightarrow
\mathbb E F(\mathbb G).
\]
Thus, $\mathbb G_N\xrightarrow{d} \mathbb G$ in $C(\mathcal C)$, completing
the proof of Theorem \ref{thm:process_class}.

\subsection{Proof of Theorem~\ref{thm:sob_conv}}
\label{sec:proofsobolevmain34}
Let us begin with some notations. For notational convenience, we will abbreviate the space $H^{-s}(0,1)$ by $E^s$ (for $s>\frac12$). We equip \(E^s\) with
the bounded metric
\[
\rho_s(u,v):=1\wedge\|u-v\|_{H^{-s}},
\]
which induces the \(H^{-s}\)-norm topology. For
\(\mu,\nu\in\mathcal P(E^s)\), let
\[
d_{\mathrm{BL},s}(\mu,\nu)
:=
\sup_F
\left|
\int_{E^s}F\,d\mu-\int_{E^s}F\,d\nu
\right|,
\]
where the supremum is over all \(F:E^s\to\mathbb R\) satisfying
\(\|F\|_\infty\leq1\) and
\(|F(u)-F(v)|\leq\rho_s(u,v)\). This metric metrizes weak convergence on
\(\mathcal P(E^s)\); see
\cite[Theorem~1.12.4]{van1996weak}.
Also, denote
\[
\mathbb E_N[\cdot]:=\mathbb E[\cdot| A_N],
\qquad
\mu_N:=\mathcal L(\eta_N| A_N),
\qquad \text{and}\qquad
\mu:=\mathcal L_{\mathbb P^*}(\eta).
\]
Choose \(\varepsilon>0\) sufficiently small that
\(q:=\beta(\|W\|_{\mathrm{op}}+\varepsilon)<1\), and define
\[
\mathcal G_N
:=
\left\{
\left\|
\frac{W_{A_N}}{\theta_N}-W
\right\|_{\mathrm{op}}
<\varepsilon
\right\},
\]
where \(W_{A_N}\) is the empirical step-kernel associated with
\(A_N\), as defined in \eqref{eq:Waneq6}. By
Lemma~\ref{lem:empirical-graphon-operator-convergence}, we have
\(\mathbb P(\mathcal G_N)\to1\).
It follows from \eqref{reqpr66} that on \(\mathcal G_N\),
\[
\mathbb E_N|\langle \bm u, \bm \sigma\rangle|^2
\leq
\frac1{1-q}\|\bm u\|_2^2,
\qquad \bm u\in[0,\infty)^N.
\]
Since the two-point correlations are nonnegative, for every
\(\bm c\in\mathbb R^N\), we have:
\[
\begin{aligned}
\mathbb E_N|\langle \bm c, \bm \sigma\rangle|^2
&=
\sum_{i,j=1}^Nc_ic_j\mathbb E_N[\sigma_i\sigma_j] \leq
\sum_{i,j=1}^N|c_i||c_j|
\mathbb E_N[\sigma_i\sigma_j]\\
&=
\mathbb E_N|\langle |\bm c|,\bm \sigma\rangle|^2
\leq
\frac1{1-q}\|\bm c\|_2^2.
\end{aligned}
\]

Now, for every \(k\geq0\), we have
\[
\langle\eta_N,e_k\rangle
=
\frac1{\sqrt N}\sum_{i=1}^N\sigma_i e_k\left(\frac{i}{N}\right) = \sigma_N(e_k).
\]
Applying the preceding bound with
\(c_i=N^{-1/2}e_k(i/N)\), and using
\(N^{-1}\sum_{i=1}^Ne_k(i/N)^2\leq2\), we have on \(\mathcal G_N\),
\[
\mathbb E_N|\langle\eta_N,e_k\rangle|^2
\leq
\frac{2}{1-q}
=:C_q,
\qquad k\geq0.
\]

Next, for each \(K\geq0\) and $u \in E^s$, define
\[
\Pi_Ku:=\sum_{k=0}^K\langle u,e_k\rangle e_k,
\qquad
b_K(s):=\sum_{k>K}\kappa_k^{-s}.
\]
Since \(s>1/2\), \(b_K(s)\to0\) as \(K\to\infty\). Hence, on \(\mathcal G_N\), we have
\[
\mathbb E_N
\|(I-\Pi_K)\eta_N\|_{H^{-s}}^2
=
\sum_{k>K}\kappa_k^{-s}
\mathbb E_N|\langle\eta_N,e_k\rangle|^2
\leq
C_qb_K(s).
\]
Similarly,
\[
\begin{aligned}
\mathbb E^*
\|(I-\Pi_K)\eta\|_{H^{-s}}^2
&=
\sum_{k>K}\kappa_k^{-s}
\mathbb E^*[\chi(e_k)^2]\\
&=
\sum_{k>K}\kappa_k^{-s}
\langle e_k,R_\beta e_k\rangle
\leq
\|R_\beta\|_{\mathrm{op}}b_K(s).
\end{aligned}
\]

Now, set
\[
\mu_{N,K}:=\mathcal L(\Pi_K\eta_N| A_N),
\qquad
\mu_K:=\mathcal L_{\mathbb P^*}(\Pi_K\eta).
\]
For each fixed \(K\), Theorem~\ref{thm:isingWeakConv}, applied to
\(e_0,\ldots,e_K\), gives
\[
\mathcal L\left(
\bigl(\langle\eta_N,e_k\rangle\bigr)_{k=0}^K
\Big|A_N
\right)
\xrightarrow{d}
\mathcal L_{\mathbb P^*}\left(
\bigl(\chi(e_k)\bigr)_{k=0}^K
\right)
\]
in probability. Since
\((a_0,\ldots,a_K)\mapsto\sum_{k=0}^Ka_ke_k\) is continuous from
\(\mathbb R^{K+1}\) into \(E^s\), it follows by the continuous mapping theorem, that
\begin{equation}\label{blsbound98}
d_{\mathrm{BL},s}(\mu_{N,K},\mu_K)
\xrightarrow{P}0.
\end{equation} 

Now, on \(\mathcal G_N\), we have:
\[
\begin{aligned}
d_{\mathrm{BL},s}(\mu_N,\mu_{N,K})
&\leq
\mathbb E_N
\rho_s(\eta_N,\Pi_K\eta_N)\\
&\leq
\left(
\mathbb E_N
\|(I-\Pi_K)\eta_N\|_{H^{-s}}^2
\right)^{1/2}
\leq
\sqrt{C_qb_K(s)}.
\end{aligned}
\]
Likewise,
\[
d_{\mathrm{BL},s}(\mu_K,\mu)
\leq
\sqrt{\|R_\beta\|_{\mathrm{op}}b_K(s)}.
\]
Thus, on \(\mathcal G_N\),
\begin{equation}\label{blsbound9882}
d_{\mathrm{BL},s}(\mu_N,\mu)
\leq
d_{\mathrm{BL},s}(\mu_{N,K},\mu_K)
+
\left(
\sqrt{C_q}+\sqrt{\|R_\beta\|_{\mathrm{op}}}
\right)b_K(s)^{1/2}.
\end{equation}

Fix \(\delta>0\), and choose \(K\) sufficiently large that the second
term on the RHS of \eqref{blsbound9882} is at most \(\delta/2\). It then follows from \eqref{blsbound98} and \eqref{blsbound9882} that as $N \rightarrow \infty$,
\[
\mathbb P\left(
d_{\mathrm{BL},s}(\mu_N,\mu)>\delta
\right)
\leq
\mathbb P(\mathcal G_N^c)
+
\mathbb P\left(
d_{\mathrm{BL},s}(\mu_{N,K},\mu_K)>\delta/2
\right)
\rightarrow0.
\]
This proves the asserted weak convergence in probability of the
conditional laws.

Finally, let \(\overline\mu_N:=\mathcal L(\eta_N)\) denote the
unconditional law of \(\eta_N\). Since
\(0\leq d_{\mathrm{BL},s}\leq2\), the preceding convergence in
probability implies
\(\mathbb E[d_{\mathrm{BL},s}(\mu_N,\mu)]\to0\). Moreover,
\[
d_{\mathrm{BL},s}(\overline\mu_N,\mu)
\leq
\mathbb E\!\left[
d_{\mathrm{BL},s}(\mu_N,\mu)
\right],
\]
and hence \(d_{\mathrm{BL},s}(\overline\mu_N,\mu)\to0\). Therefore the
unconditional laws of \(\eta_N\) converge weakly to the law of
\(\eta\) on \(H^{-s}(0,1)\). The proof of Theorem \ref{thm:sob_conv} is now complete.

\subsection*{Acknowledgement}S. Bhowal thanks Saraswata Sensarma for initial discussions that motivated this work and Sourav Chatterjee for pointing out several useful references. S. Mukherjee was supported by the AcRF Tier 1 grants A-8001449-00-00 and A-8002932-00-00.

\bibliographystyle{plain}
\bibliography{merged_verified_references1}

\appendix 

\section{Proofs of Theorems \ref{thm:ising-neural-network} and \ref{thm:hajek-graphon-ising}}
In this section, we prove Theorems \ref{thm:ising-neural-network} and \ref{thm:hajek-graphon-ising}. 

\subsection{Proof of Theorem \ref{thm:ising-neural-network}}\label{nntwproof}
Fix \(m\geq1\) and
\(\bm x_1,\ldots,\bm x_m\in\mathbb R^n\), and write
\[
    \bm H_i
    \coloneq
    \bigl(
        a(\langle\bm w_i,\bm x_1\rangle),\ldots,
        a(\langle\bm w_i,\bm x_m\rangle)
    \bigr)^\top,
    \qquad
    \bm\mu\coloneq\mathbb E[\bm H_1],
    \qquad
    \bm\Gamma\coloneq\operatorname{Cov}(\bm H_1).
\]
Thus, we have
\(\mu_b=m_a(\bm x_b)\) and
\(\Gamma_{bc}=C_a(\bm x_b,\bm x_c)\). Next, set
\[
    \bm P_k
    \coloneq
    \bigl(
        \mathcal P_k^a(\bm x_1),\ldots,
        \mathcal P_k^a(\bm x_m)
    \bigr)^\top,
    \qquad
    M_k\coloneq\frac{1}{\sqrt{k}}\sum_{i=1}^k s_i .
\]

Fix \(\bm u\in\mathbb R^m\), and let
\(Y_i:=\bm u^\top\bm H_i\),
\(\mu_{\bm u}:=\bm u^\top\bm\mu\), and
\(v_{\bm u}:=\bm u^\top\bm\Gamma\bm u\).
If
\(\varphi_{\bm u}(t):=\mathbb E[e^{itY_1}]\), then it follows from the finite
second-moment assumption \eqref{momentasmp82} and Theorem 3.3.8 in \cite{Durrett2010}, that
\[
    \log\varphi_{\bm u}(t)
    =
    i\mu_{\bm u}t-\frac{v_{\bm u}}{2}t^2+o(t^2),
    \qquad t\to0.
\]
Consequently, uniformly over
\(\bm s\in\{-1,1\}^k\), we have
\[
    \sum_{i=1}^k
    \log\varphi_{\bm u}\left(\frac{s_i}{\sqrt{k}}\right)
    =
    i\mu_{\bm u}M_k-\frac{v_{\bm u}}{2}+o(1).
\]
Using the independence of the hidden-layer weights from
\((A_k,\bm s)\), we obtain
\begin{align*}
    \mathbb E\left[
        e^{i\bm u^\top\bm P_k}\Big| A_k
    \right]
    &=
    \mathbb E\left[
        \prod_{i=1}^k
        \varphi_{\bm u}\left(\frac{s_i}{\sqrt{k}}\right)
        \Big|A_k
    \right] \\
    &=
    e^{-v_{\bm u}/2}
    \mathbb E\left[
        e^{i\mu_{\bm u}M_k}\Big| A_k
    \right](1+o(1)),
\end{align*}
where the final \(o(1)\) is deterministic.

Now, by Theorem~\ref{thm:isingWeakConv}, applied to the constant test
function \(\mathbf 1\), we have:
\[
    \mathbb E\left[e^{itM_k}\Big| A_k\right]
    \xrightarrow{P}
    \exp\left(-\frac{q_\beta t^2}{2}\right),
    \qquad t\in\mathbb R.
\]
Therefore,
\begin{equation}\label{condcharconv7888}
    \mathbb E\left[
        e^{i\bm u^\top\bm P_k}\Big| A_k
    \right]
    \xrightarrow{P}
    \exp\left\{
        -\frac12
        \left(
            v_{\bm u}
            +
            q_\beta\mu_{\bm u}^2
        \right)
    \right\}.
\end{equation}
Since
\[
    v_{\bm u}+q_\beta\mu_{\bm u}^2
    =
    \bm u^\top
    \bigl(
        \bm\Gamma+q_\beta\bm\mu\bm\mu^\top
    \bigr)
    \bm u,
\]
the limit is the characteristic function of
\(\mathcal N_m(\boldsymbol 0,
\bm\Gamma+q_\beta\bm\mu\bm\mu^\top)\).
 L\'evy's continuity theorem together with a subsequential argument connecting almost sure convergence and convergence in probability, now give
the asserted conditional weak convergence. The unconditional
convergence follows by taking expectations on both sides of \eqref{condcharconv7888}, and an application of L\'evy's continuity theorem.

\subsection{Proof of Theorem \ref{thm:hajek-graphon-ising}}\label{cstwproof}
To begin with, write
\[
    m_N\coloneq\frac1N\sum_{i=1}^N\sigma_i,
    \qquad
    \Delta_{i,N}
    \coloneq
    \frac{1}{N-1}\sum_{j\ne i}D_j-\frac12
    =
    \frac{Nm_N-\sigma_i}{2(N-1)}.
\]
By Theorem~\ref{thm:isingWeakConv}, applied to the constant function
\(\mathbf 1\), we have:
\begin{equation}\label{on2p1}
    \sqrt N\,m_N=\sigma_N(\mathbf 1)=O_P(1).
\end{equation}
Hence,
\[
    \max_{1\leq i\leq N}|\Delta_{i,N}|
    =
    O_P(N^{-1/2}),
\]
and \(N^{-1}\sum_iD_i=(1+m_N)/2\to1/2\) in probability. This proves
the assertion concerning the denominators, and henceforth, we work on the events that both $\sum_i D_i$ and $\sum_i (1-D_i)$ are non-zero. Next, for brevity, set
\[
    a_i=F_i\left(1,\frac12\right),\quad
    b_i=F_i\left(0,\frac12\right),\quad
    c_i=\partial_2F_i\left(1,\frac12\right),\quad
    d_i=\partial_2F_i\left(0,\frac12\right),
\]
and denote by $\bar{a}_N, \bar{b}_N, \bar{c}_N$ and $\bar{d}_N$ the empirical averages of these quantities. 
Note that:

\begin{eqnarray*}
\frac{\sum_{i=1}^N D_i a_i}{\sum_{i=1}^N D_i}
&=&
\frac{
\bar a_N+\frac1N\sum_{i=1}^N\sigma_i a_i
}{
1+m_N
}\\
&=&
\bar a_N
+
\frac{
\frac1N\sum_{i=1}^N\sigma_i(a_i-\bar a_N)
}{
1+m_N
}\\
&=&
\bar a_N
+
\frac1N\sum_{i=1}^N\sigma_i(a_i-\bar a_N)
+
O_P(N^{-1}).
\end{eqnarray*}
Here the last step uses \(m_N=O_P(N^{-1/2})\) (by
\eqref{on2p1}), together with
\[
    \frac1N\sum_{i=1}^N\sigma_i(a_i-\bar a_N)
    =
    O_P(N^{-1/2}),
\]
which follows by applying the Lindeberg central limit theorem
conditionally on $\bm\sigma$ to the quantity 
\(N^{-1/2}\sum_i\sigma_i(a_i-\mathbb E[a_1])\), using the independence
of $\{a_i\}_{i\geq1}$ from $\bm\sigma$, together with the usual
i.i.d. central limit theorem for $\bar a_N$.
Similarly,
\begin{eqnarray*}
\frac{\sum_{i=1}^N(1-D_i)b_i}
     {\sum_{i=1}^N(1-D_i)}
&=&
\frac{
\bar b_N-\frac1N\sum_{i=1}^N\sigma_i b_i
}{
1-m_N
}\\
&=&
\bar b_N
-
\frac{
\frac1N\sum_{i=1}^N\sigma_i(b_i-\bar b_N)
}{
1-m_N
}\\
&=&
\bar b_N
-
\frac1N\sum_{i=1}^N\sigma_i(b_i-\bar b_N)
+
O_P(N^{-1}).
\end{eqnarray*}
Here the last step again uses \(m_N=O_P(N^{-1/2})\) and
\[
    \frac1N\sum_{i=1}^N\sigma_i(b_i-\bar b_N)
    =
    O_P(N^{-1/2}).
\]
Next, we have:

\begin{eqnarray*}
\frac{\sum_{i=1}^N D_i c_i\Delta_{i,N}}
     {\sum_{i=1}^N D_i}
&=&
\frac{
Nm_N\sum_{i=1}^N D_i c_i
-
\sum_{i=1}^N D_i c_i\sigma_i
}{
2(N-1)\sum_{i=1}^N D_i
}\\
&=&
\frac{Nm_N-1}{2(N-1)}
\cdot\frac{\sum_{i=1}^N D_i c_i}
     {\sum_{i=1}^N D_i}
\\
&=&
\left\{
\frac{m_N}{2}+O_P(N^{-1})
\right\}
\left\{
\bar c_N+O_P(N^{-1/2})
\right\}\\
&=&
\frac{m_N}{2}\bar c_N+O_P(N^{-1}).
\end{eqnarray*}
Here the second equality uses $D_i\sigma_i=D_i$, while the last two
steps use $m_N=O_P(N^{-1/2})$ and the treated-group ratio expansion
\[
    \frac{\sum_{i=1}^N D_i c_i}
         {\sum_{i=1}^N D_i}
    =
    \bar c_N+O_P(N^{-1/2}).
\]
Similarly,
\begin{eqnarray*}
\frac{\sum_{i=1}^N(1-D_i)d_i\Delta_{i,N}}
     {\sum_{i=1}^N(1-D_i)}
&=&
\frac{
Nm_N\sum_{i=1}^N(1-D_i)d_i
-
\sum_{i=1}^N(1-D_i)d_i\sigma_i
}{
2(N-1)\sum_{i=1}^N(1-D_i)
}\\
&=&
\frac{Nm_N+1}{2(N-1)} \cdot
\frac{\sum_{i=1}^N(1-D_i)d_i}
     {\sum_{i=1}^N(1-D_i)}
\\
&=&
\left\{
\frac{m_N}{2}+O_P(N^{-1})
\right\}
\left\{
\bar d_N+O_P(N^{-1/2})
\right\}\\
&=&
\frac{m_N}{2}\bar d_N+O_P(N^{-1}).
\end{eqnarray*}
Here the second equality uses
$(1-D_i)\sigma_i=-(1-D_i)$, while the last two steps use
$m_N=O_P(N^{-1/2})$ and the control-group ratio expansion
\[
    \frac{\sum_{i=1}^N(1-D_i)d_i}
         {\sum_{i=1}^N(1-D_i)}
    =
    \bar d_N+O_P(N^{-1/2}).
\]

Let \(r_{i,N}^{(d)}\) denote the remainder in the second-order
Taylor expansion of $F_i\left(d,\frac12+\Delta_{i,N}\right)$, i.e.
\[
    F_i\left(d,\frac12+\Delta_{i,N}\right)
    =
    F_i\left(d,\frac12\right)
    +
    \partial_2F_i\left(d,\frac12\right)\Delta_{i,N}
    +
    r_{i,N}^{(d)}.
\]
The uniform second-derivative bound gives
\(
|r_{i,N}^{(d)}|\lesssim\Delta_{i,N}^2
\), and hence, since
\(\max_i|\Delta_{i,N}|=O_P(N^{-1/2})\), the two Taylor-remainder
terms for $d \in \{0,1\}$ contribute a factor of \(O_P(N^{-1})\). Therefore,
combining the preceding expansions, we have:
\begin{eqnarray}
\widehat\tau_N
&=& \frac{\sum_{i=1}^N D_i(a_i + c_i \Delta_{i,N} + r_{i,N}^{(1)})}{{\sum_{i=1}^N D_i}
} - \frac{\sum_{i=1}^N (1-D_i)(b_i + d_i \Delta_{i,N} + r_{i,N}^{(0)})}{{\sum_{i=1}^N (1-D_i)}
}
\nonumber\\&=&\bar a_N-\bar b_N
+
\frac1N\sum_{i=1}^N
\sigma_i
\bigl\{
(a_i-\bar a_N)+(b_i-\bar b_N)
\bigr\}
+
\frac{m_N}{2}(\bar c_N-\bar d_N)
+
O_P(N^{-1})
\nonumber\\
&=&
\bar a_N-\bar b_N
+
\frac1N\sum_{i=1}^N
\sigma_i
\left\{
(a_i+b_i)-(\bar a_N+\bar b_N)
+
\frac{\bar c_N-\bar d_N}{2}
\right\}
+
O_P(N^{-1})
\nonumber\\
&=&
\bar a_N-\bar b_N
+
\frac1N\sum_{i=1}^N
\sigma_i
\left\{
R_i-\bar R_N
+
\frac{\bar c_N-\bar d_N}{2}
\right\}
+
O_P(N^{-1})
\label{eq:hajek-interference-linearization-1}
\end{eqnarray}
where $\bar R_N := \coloneq
\frac1N\sum_{i=1}^N R_i
=
\bar a_N+\bar b_N$.
Next, since $\bs \stackrel{d}{=} -\bs$, we have
\(\mathbb E(\Delta_{i,N}|A_N)=0\). In addition, the
conditional second-moment bound \eqref{reqpr66} gives
\[
    \max_{1\leq i\leq N}
    \mathbb E(\Delta_{i,N}^2| A_N)
    =
    O_P(N^{-1}).
\]
A Taylor expansion in \eqref{eq:conditional-direct-effect} gives
\begin{eqnarray*}
\tau_N
&=&
\bar a_N-\bar b_N
+
\frac1N\sum_{i=1}^N
(c_i-d_i)
\mathbb E(
\Delta_{i,N}
|
A_N,F_1,\ldots,F_N
)\\
&&\quad+
\frac1N\sum_{i=1}^N
\mathbb E\left(
r_{i,N}^{(1)}-r_{i,N}^{(0)}
\,\middle|
A_N,F_1,\ldots,F_N
\right).
\end{eqnarray*}
By symmetry of the Ising measure and independence of the $F_i$'s from the assignment mechanism, we have:
\[
    \mathbb E(\Delta_{i,N}| A_N,F_1,\ldots,F_N)
    =
    \mathbb E(\Delta_{i,N}| A_N)
    =
    0.
\]
Consequently,
\[
    \tau_N
    =
    \bar a_N-\bar b_N+O_P(N^{-1}).
\]
Combining this with
\eqref{eq:hajek-interference-linearization-1}, the law of large
numbers, and \eqref{on2p1}, gives:
\begin{eqnarray}\label{tnhattnlim98}
\sqrt N\bigl(\widehat\tau_N-\tau_N\bigr)
&=&
\frac1{\sqrt N}\sum_{i=1}^N
\sigma_i
\left\{
R_i-\bar R_N+\frac{\bar c_N-\bar d_N}{2}
\right\}
+
o_P(1)
\notag\\
&=&
\frac1{\sqrt N}\sum_{i=1}^N L_i\sigma_i
+
\left\{
\mathbb E[R_1]-\bar R_N
+
\frac{\bar c_N-\bar d_N}{2}
-
Q
\right\}
\frac1{\sqrt N}\sum_{i=1}^N\sigma_i
+
o_P(1)
\notag\\
&=&
\frac1{\sqrt N}\sum_{i=1}^N L_i\sigma_i
+
o_P(1).
\end{eqnarray}
Let
\(
    \mathcal R_N
    \coloneq
    \sqrt N\bigl(\widehat\tau_N-\tau_N\bigr)
    -
    \frac1{\sqrt N}\sum_{i=1}^N L_i\sigma_i.
\)
By \eqref{tnhattnlim98}, $\mathcal R_N=o_P(1)$, and hence,
\(
    2\wedge|\mathcal R_N|
    \xrightarrow{P}0.
\)
Since $0\leq 2\wedge|\mathcal R_N|\leq2$, the dominated convergence theorem implies that
\[
    \E\left[2\wedge|\mathcal R_N|\right]
    \longrightarrow0.
\]
Therefore, by the tower property and Markov's inequality,
\[
    \E\left[
        2\wedge|\mathcal R_N|
        \,\middle|\,A_N
    \right]
    \xrightarrow{P}0.
\]
Consequently, under the natural coupling, we have:
\begin{align}
&d_{\mathrm{BL},1}\left(
    \mathcal L\left(
        \sqrt N(\widehat\tau_N-\tau_N)\mid A_N
    \right),
    \mathcal L\left(
        \frac1{\sqrt N}\sum_{i=1}^N L_i\sigma_i
        \,\middle|\,A_N
    \right)
\right)
\nonumber\\
&\qquad\leq
\E\left[
    2\wedge|\mathcal R_N|
    \,\middle|\,A_N
\right]
\xrightarrow{P}0.
\label{eq:hajek-conditional-slutsky}
\end{align}

Next, let
\(\varphi_{L}(t)=\mathbb E[e^{itL_1}]\). Since \(L_1\) has a finite
second moment, Theorem 3.3.8 in \cite{Durrett2010} gives:
\[
    \log\varphi_{L}(t)
    =
    i\kappa_1t
    -
    \frac{\kappa_2-\kappa_1^2}{2}t^2
    +
    o(t^2),
    \qquad t\to0.
\]
Using the independence of the \(L_i\)'s from \((A_N,\bs)\), we thus have:
\begin{eqnarray*}
    \mathbb E \left(
        \exp \left\{
            \frac{it}{\sqrt N}
            \sum_{i=1}^N L_i\sigma_i
        \right\}
        \Bigg|\,A_N\right)
     &=& \e \left[ \mathbb E
        \left(\exp\left\{
            \frac{it}{\sqrt N}
            \sum_{i=1}^N L_i\sigma_i
        \right\}
        \,\middle|\,A_N,\bs\right)
        \Bigg|\,A_N
    \right]\\
    &=&
    \e\left[
        \prod_{i=1}^N
        \varphi_L\left(
            \frac{t\sigma_i}{\sqrt N}
        \right)
        \Bigg|\,A_N
    \right]\\
    &=&
    \e\left[
        \exp\left\{
            \sum_{i=1}^N
            \log\varphi_L\left(
                \frac{t\sigma_i}{\sqrt N}
            \right)
        \right\}
        \Bigg|\,A_N
    \right]\\
    &=&
    \exp\left\{
        -\frac{t^2}{2}
        (\kappa_2-\kappa_1^2)
        +o(1)
    \right\}
    \mathbb E\left[
        e^{it\kappa_1\sigma_N(\mathbf 1)}
        \,\middle|\,A_N
    \right].
\end{eqnarray*}
Theorem~\ref{thm:isingWeakConv} now implies that the last expression
converges in probability to
\[
    \exp\left\{
        -\frac{t^2}{2}
        \left(
            \kappa_2-\kappa_1^2
            +
            \kappa_1^2q_\beta
        \right)
    \right\}.
\]
L\'evy's continuity theorem, together with a subsequential argument
connecting almost sure convergence and convergence in probability,
shows that
\(
    \mathcal L\left(
        \frac1{\sqrt N}\sum_{i=1}^N L_i\sigma_i
        \,\middle|\,A_N
    \right)
\)
converges weakly in probability to
\(
    \mathcal N\left(
        0,
        \kappa_2-\kappa_1^2+\kappa_1^2q_\beta
    \right)
    =
    \mathcal N\left(
        0,
        \kappa_2+\kappa_1^2(q_\beta-1)
    \right).
\)
Combining this with
\eqref{eq:hajek-conditional-slutsky} and the triangle inequality for
$d_{\mathrm{BL},1}$ gives the asserted conditional weak convergence
of
\(
    \sqrt N\bigl(\widehat\tau_N-\tau_N\bigr).
\)
The unconditional conclusion follows by taking expectations.

\section{Some Technical Results}
In this section, we state and prove some technical lemmas which are crucial in proving the main results in this paper.

\begin{lemma}\label{circlem}
   Let $f:[0,1]\to\mathbb R$ be Riemann integrable, and suppose that
\(
    \beta\|T_W\|_{\mathrm{op}}
    +
    2|t|\|f\|_2^2
    <1.
\)
Then, as $N\to\infty$,
\[
    \E_\mu Y_{f,N}
    \longrightarrow
    \frac{1}{\sqrt{\dett(I-T_K)}},
\]
where
\(
    K=\beta W+2t\opb{f}{f},
\)
and $Y_{f,N}$ is defined in \eqref{circ22}.
    \end{lemma}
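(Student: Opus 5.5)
The plan is to identify $\E_\mu Y_{f,N}$ as the moment generating function, at the point $1$, of a degenerate Rademacher quadratic form. I would prove that this form converges in distribution to a weighted sum of centered chi-squares, and then upgrade to convergence of the exponential moment using uniform integrability. Write
\[
U_N\coloneqq\frac1N\sum_{i>j}K\left(\tfrac iN,\tfrac jN\right)\sigma_i\sigma_j
=\tfrac12\bigl(\bs^\top M_N\bs-\tr M_N\bigr),
\qquad M_N(i,j)\coloneqq\tfrac1N K\left(\tfrac iN,\tfrac jN\right).
\]
Here $\bs\sim\mu$. Let $\{\lambda_k(K)\}$ be the eigenvalues of $T_K$; all of them are $<1$, since $T_K$ is self-adjoint and $\|T_K\|_{\op}\le\|T_{|K|}\|_{\op}\le\rho_t<1$ by \eqref{eq:absolute-kernel-operator-bound}. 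The target is then
\[
\E\exp\Bigl(\tfrac12\sum_k\lambda_k(Z_k^2-1)\Bigr)=\prod_k\frac{e^{-\lambda_k/2}}{\sqrt{1-\lambda_k}}=\dett(I-T_K)^{-1/2}.
\]
This product converges because $\sum_k\lambda_k^2=\|K\|_{L^2}^2<\infty$.

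\textbf{Step 1: distributional limit of $U_N$.} Riemann integrability of $W$, $\mathrm{diag}_W$ and $f$ lets me approximate $K$ in $L^2$ by step kernels $K_r$ on a grid of mesh $1/r$. Their discretizations satisfy
\[
N^{-2}\sum_{i,j}\bigl(K-K_r\bigr)^2\left(\tfrac iN,\tfrac jN\right)\to\|K-K_r\|_{L^2}^2,
\]
and $\|K-K_r\|_{L^2}^2$ is small for large $r$. Since off-diagonal Rademacher chaos is orthogonal,
\[
\E_\mu(U_N-U_N^{(r)})^2=\tfrac1{N^2}\sum_{i>j}(K-K_r)^2\left(\tfrac iN,\tfrac jN\right),
\]
which is uniformly small. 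The kernel $K_r$ has finite rank with step eigenfunctions $\psi_1,\dots,\psi_m$. Hence $U_N^{(r)}=\frac12\sum_{k}\lambda_k(K_r)\bigl(\sigma_N(\psi_k)^2-N^{-1}\sum_i\psi_k(i/N)^2\bigr)$. The multivariate Lindeberg CLT for independent Rademacher linear statistics gives $(\sigma_N(\psi_k))_k\to$ i.i.d.\ $\mathcal N(0,1)$. The limits over $r$ then patch together by a standard approximation argument (as in Billingsley's Theorem 3.2), since $\lambda_k(K_r)\to\lambda_k(K)$ in $\ell^2$ by Hoffman--Wielandt.

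\textbf{Step 2: uniform integrability.} Fix $\delta>0$ with $(1+\delta)\rho_t<1$. Write $M_N=M_N^+-M_N^-$, and let $P_N$ denote the spectral projection of $M_N$ onto its positive eigenspaces. Rademacher variables are $1$-subgaussian. Hubbard--Stratonovich applied to $M_N^+$, i.e.\ $\E_\mu e^{\frac a2\bs^\top M^+\bs}=\E_g\E_\mu e^{\sqrt a\,\bs^\top (M^+)^{1/2}g}\le\E_g e^{\frac a2 g^\top M^+g}$, together with dropping $-\bs^\top M^-\bs\le0$, gives
\[
\E_\mu e^{(1+\delta)U_N}\le \exp\Bigl(-\tfrac{1+\delta}2\tr M_N\Bigr)\det\bigl(I-(1+\delta)M_N^+\bigr)^{-1/2}.
\]
To make sure the right side is bounded I would keep only $-\tfrac{1+\delta}2\tr M_N^+$ in the exponent and use $-\log(1-x)-x\le C x^2$ on $[0,(1+\delta)\rho_t]$. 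That leaves $\exp(\frac{1+\delta}2\tr M_N^-)\cdot\exp(C\|M_N\|_{HS}^2)$. The Hilbert--Schmidt norm $\|M_N\|_{HS}^2\to\|K\|_{L^2}^2$ is bounded. I expect the trace of $M_N^-$ to be the delicate point: it need not be bounded a priori. To handle it, I would not drop $\bs^\top M^-\bs$ but bound it below by $\tr M^-$ minus a degenerate chaos, and absorb that chaos via Hölder. Equivalently, I could apply Hubbard--Stratonovich with imaginary Gaussians to $M_N^-$, whose contribution $\E_\mu e^{-\frac a2\bs^\top M^-\bs}$ combines with $e^{\frac a2\tr M^-}$ to give a factor $\prod(1+a\lambda^-)^{-1/2}e^{a\lambda^-/2}\le e^{Ca^2\sum(\lambda^-)^2}$. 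The bound also needs $\|M_N\|_{\op}$ to be eventually at most $\rho_t+o(1)$; this follows from $\|M_N\|_{\op}\le\|T_{|K|}\|_{\op}+o(1)$ via step-kernel comparison.

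\textbf{Step 3: the $O(Q_N(\bs))$ term and conclusion.} Here $Q_N\le\frac{1}{N^2\theta_N^2}\cdot\bigl|\frac1N\sum_{i>j}W\sigma_i\sigma_j\bigr|$, and $N^2\theta_N^2\gtrsim N^{2/3}\to\infty$ by Assumption \ref{assumption12}(3). The quantity inside the absolute value is a chaos of the type controlled in Step 2, with exponential moments bounded. So $e^{O(Q_N)}\to1$ in $L^p(\mu)$ for any fixed $p$ (bound $|x|\le e^{x}+e^{-x}$ and use Step 2 with kernel $\pm\epsilon\beta W$). By Hölder, $\E_\mu Y_{f,N}-\E_\mu e^{U_N}\to0$. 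Steps 1--2 give $\E_\mu e^{U_N}\to\E e^{\frac12\sum\lambda_k(Z_k^2-1)}=\dett(I-T_K)^{-1/2}$.

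The main obstacle is Step 2: getting exponential-moment bounds uniform in $N$ for an indefinite kernel. In particular, $\tr M_N^\pm$ individually need not be bounded, and the argument has to exploit only the Hilbert--Schmidt control and the operator-norm gap $\rho_t<1$.
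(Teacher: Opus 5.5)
Your Step 1 is a legitimate alternative to the paper's cumulant argument (Lemma~\ref{quadJointAsym}), and Step 3 matches the paper. The gap is Step 2. Everything else rests on the uniform bound $\sup_N\mathbb E_\mu e^{(1+\delta)U_N}<\infty$, which you flag as the main obstacle but do not establish, and neither of your remedies works as stated.

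The obstruction you name is real. For $W(x,y)=\mathbf 1\{x+y\le 1\}$ the eigenvalues of $T_W$ are of order $\pm 1/k$, so $\operatorname{tr}M_N^{\pm}$ grow like $\log N$.

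\emph{The imaginary Hubbard--Stratonovich route fails.} The forms $\bs^\top M_N^+\bs$ and $\bs^\top M_N^-\bs$ are functions of the same $\bs$, so $\mathbb E_\mu e^{\frac a2\bs^\top M_N^+\bs-\frac a2\bs^\top M_N^-\bs}$ does not factor into your two separately bounded pieces. Linearizing both parts at once gives $\mathbb E_{g,h}\prod_i\cosh(x_i+\mathrm i y_i)$. Since $|\cosh(x+\mathrm i y)|^2=\cosh^2x-\sin^2y$, there is no Gaussian-type bound that captures the cancellation. Even the one-sided comparison $\mathbb E_\mu e^{-\frac a2\bs^\top M\bs}\le\det(I+aM)^{-1/2}$ is not a general fact for Rademacher vectors and positive semidefinite $M$. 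The cosines in $\mathbb E_h\prod_i\cos(\cdot)$ are not dominated by Gaussian factors. For instance, with $N=2$ and $M=m\begin{pmatrix}1&-1\\-1&1\end{pmatrix}$ the left side is $\frac12(1+e^{-2am})$, which exceeds $(1+2am)^{-1/2}$ at $am=2$.

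\emph{The H\"older route leaves the same problem in place.} To keep the $M_N^+$ factor below the sharp threshold you need $p<1/((1+\delta)\rho_t)$, so the conjugate exponent $p'$ may be large. You are then left needing
$\sup_N\mathbb E_\mu\exp\bigl(-\tfrac{p'(1+\delta)}{2}\sum_{i\ne j}M_N^-(i,j)\sigma_i\sigma_j\bigr)<\infty$. This is an exponential-moment bound at a large multiplier for a chaos whose matrix has operator norm of order one. Hanson--Wright only reaches multipliers up to a small absolute constant over $\|M_N^-\|_{\op}$, and the trivial bound brings back the divergent $\operatorname{tr}M_N^-$.

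\emph{How the paper closes this.} It avoids the spectral decomposition and uses the hypothesis in the form $\|T_{|K|}\|_{\op}<1$, which is exactly what $\beta\|T_W\|_{\op}+2|t|\|f\|_2^2<1$ supplies. First, write each factor as $\cosh(\cdot)\bigl(1+\sigma_i\sigma_j\tanh(\cdot)\bigr)$ and expand over even subgraphs. This gives $\mathbb E_\mu e^{\frac{\gamma}{2N}\sum_{i\ne j}K\sigma_i\sigma_j}\le\mathbb E_\mu e^{\frac{\gamma}{2N}\sum_{i\ne j}|K|\sigma_i\sigma_j}$. Next, let $\mathbf K_N$ be the zero-diagonal, entrywise nonnegative matrix with entries $|K(i/N,j/N)|$. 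Gaussian domination bounds the right side by $\prod_i(1-\gamma\lambda_i(\mathbf K_N)/N)^{-1/2}$. Because $\operatorname{tr}\mathbf K_N=0$, the first-order terms cancel, leaving $\exp\bigl(C\gamma^2\operatorname{tr}(\mathbf K_N^2)/N^2\bigr)=O(1)$; this is Lemma~\ref{generalexpUniInt}. Nonnegativity plus zero diagonal is precisely what removes the trace that blocks you.

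\emph{How to repair your own route.} Add a spectral truncation of $M_N^-$ at a level $\eta>0$. The eigenvalues below $-\eta$ have total mass at most $\|M_N\|_{\mathrm{HS}}^2/\eta$, so that piece can be dropped at bounded cost. The remaining negative piece has operator norm at most $\eta$. Hanson--Wright then applies to its off-diagonal chaos at multiplier $p'(1+\delta)/2$ once $\eta\lesssim 1/p'$. With this step your argument would close, and it would in fact need only $\lambda_{\max}(T_K)<1$ rather than $\|T_{|K|}\|_{\op}<1$.
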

\begin{proof}
First, note that by \eqref{eq:absolute-kernel-operator-bound}, 
\(
    \|T_{|K|}\|_{\mathrm{op}}<1.
\)
Moreover, the argument leading to
\cite[Eq.~(A.51)]{mukherjee2026isinginference} gives, for every fixed
$L>0$,
\begin{equation}\label{eq:QN-exponential-bound}
    \limsup_{N\to\infty}
    \E_\mu\exp\{LQ_N(\bs)\}
    <\infty.
\end{equation}
Combining \eqref{eq:QN-exponential-bound} with
Lemma~\ref{generalexpUniInt} and H\"older's inequality, we obtain,
for some $\gamma>0$,
\[
    \sup_{N\geq1}
    \E_\mu Y_{f,N}^{1+\gamma}
    <\infty.
\]
Thus, $\{Y_{f,N}\}_{N\geq1}$ is uniformly integrable under $\mu$.

On the other hand, Lemma~\ref{lem:Qnlp} gives
\[
    Q_N(\bs)\xrightarrow{P}0
\]
under $\mu$. Hence, the one-copy consequence of Lemma~\ref{quadJointAsym},
together with Slutsky's theorem, gives
\[
    Y_{f,N}
    \xrightarrow{d}
    \exp\left\{
        \frac12
        \sum_{j=1}^{\infty}
        \lambda_j(K)(Z_j^2-1)
    \right\}.
\]
Uniform integrability of $\{Y_{f,N}\}_{N\geq1}$ therefore yields
\[
    \lim_{N\to\infty}\E_\mu Y_{f,N}
    =
    \prod_{j=1}^{\infty}
    \frac{\exp\{-\lambda_j(K)/2\}}
         {\sqrt{1-\lambda_j(K)}}
    =
    \frac{1}{\sqrt{\dett(I-T_K)}}
\]
completing the proof of Lemma \ref{circlem}.
\end{proof}

\begin{lemma}\label{var_sum}
 Fix a Riemann-integrable function
$f:[0,1]\to\mathbb R$ and $t\in\mathbb R$ such that
\(
    \beta\|T_W\|_{\mathrm{op}}
    +
    2|t|\|f\|_2^2
    <1.
\) Then, we have:
    $$
        \var \left(\frac{\hat Z_N\left(\beta\right)}{\E \hat Z_N\left(\beta\right)}\right) = O\left(\frac{1}{N\theta_N}\right) \quad\text{and}\quad  \var \left(\frac{\hat Z_N^f\left(\beta,t\right)}{\E \hat Z_N^f\left(\beta,t\right)}\right) = O\left(\frac{1}{N\theta_N}\right)
    $$
    where $\hat{Z}_N(\beta)$ and $\hat{Z}^f_N(\beta,t)$ are as defined in \eqref{def:znhatising} and \eqref{def:znhatfising}, respectively.
\end{lemma}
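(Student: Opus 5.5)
Both bounds are second-moment computations over the randomness of $A_N$, carried out with two independent replica configurations. I will treat $\hat Z^f_N(\beta,t)$, since $\hat Z_N(\beta)$ is the case $t=0$. Write
\[
\E[\hat Z^f_N(\beta,t)^2]=\frac{1}{4^N}\sum_{\bs,\bt}e^{\frac tN[(\bs^\top\bldf)^2+(\bt^\top\bldf)^2]}\,\E[T(\bs)T(\bt)].
\]
Since the entries $A_N(i,j)$, $i\le j$, are independent Bernoulli$(p_{ij})$ with $p_{ij}=\theta_NW(i/N,j/N)$, the expectation factorizes over edges:
\[
\E[T(\bs)T(\bt)]=\prod_{i\le j}\bigl(1+p_{ij}(e^{c_{ij}\gamma(\sigma_i\sigma_j+\tau_i\tau_j)}-1)\bigr),
\]
with $c_{ij}\in\{1,2\}$ accounting for diagonal and off-diagonal terms. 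The deterministic factors $e^{-\gamma\xi_N-\gamma^2\zeta_N}$ are not exactly deterministic. I will handle them by noting that $\xi_N$ and $\zeta_N$ concentrate: $\var(\gamma\xi_N)=O(1/(N\theta_N))$ and $\var(\gamma^2\zeta_N)=O(N^{-2}\theta_N^{-1})$. So they contribute only a multiplicative factor $1+O(1/(N\theta_N))$ to the ratio of second to first moment squared.

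Next I expand $\log(1+p(e^{x}-1))=px+\tfrac12p(1-p)x^2+O(p|x|^3)$ with $x=O(\gamma)=O(1/(N\theta_N))$. The mixed term $\sigma_i\sigma_j\tau_i\tau_j$ in $x^2$ has coefficient of order $p\gamma^2\sim\beta^2W/(N^2\theta_N)$, so
\[
\frac{\E[T(\bs)T(\bt)]}{\E T(\bs)\,\E T(\bt)}=\exp\Bigl(\frac{\beta^2}{2N^2\theta_N}\sum_{i\ne j}W\bigl(\tfrac iN,\tfrac jN\bigr)(1-\theta_NW)\,\sigma_i\sigma_j\tau_i\tau_j+O(Q'_N)\Bigr),
\]
where $Q'_N$ collects cubic-order errors of size $N^2\cdot\theta_N\gamma^3\sim 1/(N\theta_N^2)$, times normalized quadratic forms. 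Here I use $\theta_N\gtrsim N^{-2/3}$ to make $N^2\theta_N\gamma^3\cdot(\text{fluctuation } N)$ negligible. The point is that this is exactly why the $N^{-2/3}$ threshold appears, and the $\gamma^2\zeta_N$ compensator in $T$ was chosen to cancel the $x^2$ terms that do not depend on the configuration.

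Writing $\rho_i=\sigma_i\tau_i$, the ratio $\E[(\hat Z^f_N)^2]/(\E\hat Z^f_N)^2$ becomes a ratio of expectations under the annealed tilted product measure on $(\bs,\bt)$. The numerator carries the extra factor $\exp\bigl(\frac{\epsilon_N}{N}\sum_{i\ne j}W\rho_i\rho_j\bigr)$ with $\epsilon_N=\beta^2/(2N\theta_N)\to0$ whenever $N\theta_N\to\infty$. Under the tilted measure (high temperature, with effective kernel $|K|$ of operator norm $<1$), the quadratic form $\frac1N\sum W\rho_i\rho_j$ has a centered, uniformly exponentially integrable distribution. This follows from Lemma~\ref{generalexpUniInt}-type bounds together with the exponential control \eqref{eq:QN-exponential-bound}, applied to the two-replica system with kernel of norm at most $\rho_t<1$. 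Its mean is $O(1)$: it equals the trace-type term $\E\rho_i\rho_j$ summed, which is $O(1)$ in high temperature. Hence $\E e^{\epsilon_N\cdot(\text{form})}=1+O(\epsilon_N)$, and with the error terms $O(1/(N\theta_N))$ this gives $\var=O(1/(N\theta_N))$.

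The main obstacle is obtaining this rate rather than just $o(1)$. It requires bounding the exponential moment of the replica-overlap quadratic form uniformly in $N$ under the tilted two-replica measure, including the $e^{t(\bs^\top\bldf)^2/N}$ factors. I would use Hölder's inequality to separate the $\epsilon_N$-perturbation from the tilt, and then use the first-order expansion $e^{x}\le 1+x+x^2e^{|x|}$ so that only first and second moments of the overlap form are needed. Those moments are $O(1)$ by the same determinant/uniform integrability argument used in Lemma~\ref{circlem}.
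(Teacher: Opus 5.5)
Apart from one step, your route is the paper's. The shared ingredients are a two-replica second moment and an edge-wise factorization of $\mathbb E[T(\bs)T(\bt)]$; the paper imports the resulting expansions as Lemmas~\ref{lem:expect2} and~\ref{lem:cov2}. Both arguments then isolate the overlap term $R_N(\bs,\bt)$, which is $(N\theta_N)^{-1}$ times a normalized chaos in $\sigma_i\tau_i$. They apply H\"older's inequality against the $(1+\delta)$-moment of the tilt from Lemma~\ref{generalexpUniInt} and \eqref{eq:QN-exponential-bound}, and bound the denominator below via Lemma~\ref{circlem}. Your observation $\hat Z_N(\beta)=\hat Z^f_N(\beta,0)$ does give the first bound.

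The paper does not compute the mean of the overlap form under the tilted measure. It bounds $|e^{\Lambda_N}-1|$ in $L^p$ under the \emph{uniform} measure on $\{-1,+1\}^{2N}$, where $R_N=O_{L^p}((N\theta_N)^{-1})$ is a plain chaos bound, and then applies H\"older. That is simpler. It also makes your claim that the form is centered under the tilt unnecessary; that claim is false, since its tilted mean is a nonnegative sum of squared correlations.

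The step that fails is your treatment of the compensator $e^{-\gamma\xi_N-\gamma^2\zeta_N}$. Your product formula for $\mathbb E[T(\bs)T(\bt)]$ omits it. Your proposed repair, that $\xi_N,\zeta_N$ concentrate and so contribute only a factor $1+O((N\theta_N)^{-1})$, is wrong.

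To see why, use $e^{2\gamma a s}=e^{a\log\cosh 2\gamma}(1+as\tanh 2\gamma)$ for $a\in\{0,1\}$, $s\in\{\pm1\}$. This shows that $\log Z_N(\beta)$ contains the additive term $\log\cosh(2\gamma)\sum_{i<j}A_N(i,j)\approx\gamma^2(\zeta_N-\xi_N)$, and the compensator $-\gamma^2\zeta_N$ is there precisely to cancel it. For $\theta_N\to0$ this term has variance of order $\gamma^4N^2\theta_N\asymp(N^2\theta_N^3)^{-1}$, not $O(N^{-2}\theta_N^{-1})$ as you wrote. That is $\gg(N\theta_N)^{-1}$ when $\theta_N\ll N^{-1/2}$, and of order one at $\theta_N\asymp N^{-2/3}$.

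The replica computation shows the same thing. Without the compensator, each edge contributes $p_{ij}(1-p_{ij})(e^{2\gamma s}-1)(e^{2\gamma r}-1)$ with $s=\sigma_i\sigma_j$, $r=\tau_i\tau_j$. Its configuration-free piece $(\cosh 2\gamma-1)^2\approx 4\gamma^4$ sums to a deterministic factor $\exp\{c\,\beta^4/(N^2\theta_N^3)\}$ with $c>0$. So the uncompensated partition function does not have relative variance $O((N\theta_N)^{-1})$ for $N^{-2/3}\lesssim\theta_N\ll N^{-1/2}$. The compensated one does only because two strongly correlated effects of order $(N^2\theta_N^3)^{-1}$ cancel. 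No argument that treats the compensator as a separate, independently concentrating factor can recover the rate.

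The fix is to keep the compensator inside the factorization. This is legitimate because $\xi_N$ and $\zeta_N$ are linear in $A_N$: the exponent of $T(\bs)$ is $\sum_{i<j}A_N(i,j)(2\gamma\sigma_i\sigma_j-2\gamma^2)-\gamma^2\sum_i A_N(i,i)$. The configuration-free part of $e^{2\gamma s-2\gamma^2}-1$ is then $e^{-2\gamma^2}\cosh 2\gamma-1=O(\gamma^4)$, so the $\gamma^4$ term drops out of the replica ratio. What remains is $R_N$ plus errors controlled by $Q_N$, exactly as in Lemma~\ref{lem:cov2}. Your later remark that the compensator cancels the configuration-independent $x^2$ terms is the correct picture; delete the concentration argument.
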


\begin{proof}
   The first variance bound follows directly from Lemma A.1 in \cite{mukherjee2026isinginference}, so we prove the second bound. Throughout the proof, write
   \[
        W_{ij}\coloneqq W\left(\frac{i}{N},\frac{j}{N}\right),
        \qquad
        f_i\coloneqq f\left(\frac{i}{N}\right).
   \]
   To begin with, it follows from Lemmas \ref{lem:expect2} and \ref{lem:cov2}, that:
\begin{eqnarray*}
        &&\var \hat Z^f_N(\beta,t)\\ &=& \frac{1}{4^N} \sum_{\bs, \bt} e^{\frac{t}{N}\bs^T \bldf \bldf^T\bs+\frac{t}{N}\bt^T \bldf \bldf^T\bt}\cov \left(T(\bs),T(\bt)\right)\\                                                                                           \\
        &=& \frac{1}{4^N} \sum_{\bs, \bt}e^{\frac{t}{N}\bs^T \bldf \bldf^T\bs+\frac{t}{N}\bt^T \bldf \bldf^T\bt} \E T(\bs) \E T(\bt) \left(e^{R_N(\bs,\bt)+O(Q_N(\bs))+O(Q_N(\bt))+O\left(\frac{1}{N\theta_N}Q_N(\bs\bt)\right)}-1\right)
\end{eqnarray*}
where $\bs\bt
\coloneq
(\sigma_1\tau_1,\ldots,\sigma_N\tau_N) \in \{-1,+1\}^N$ and
\begin{equation}\label{RNdef:def8}
        R_N(\bs,\bt)\coloneqq \frac{\beta^2}{\theta_N N^2} \sum_{i>j}W_{ij}\left(1-\theta_N W_{ij}\right)\sigma_i\sigma_j\tau_i\tau_j
    \end{equation}
    and
    \begin{equation}\label{QNdef:def8}
        Q_N(\bs)\coloneqq \frac{1}{N^3 \theta_N^2}\left|\sum_{i>j}W_{ij}\sigma_i \sigma_j\right|.
    \end{equation}
Denoting $\nu$ to be the uniform measure on $\{-1,+1\}^{2N}$, we have the following from Lemma \ref{lem:taylor_expRn}:
\begin{eqnarray}\label{vznf_expansion}
    \var \hat Z^f_N(\beta,t) &=& \E_{(\bs, \bt) \sim \nu}\Bigg[ e^{\frac{t}{N}\bs^T \bldf \bldf^T\bs+\frac{t}{N}\bt^T \bldf \bldf^T\bt}\E T(\bs) \E T(\bt)\Bigg(R_N(\bs,\bt)\nonumber\\ 
    &+& O(Q_N(\bs))+O(Q_N(\bt))+O\left(\frac{1}{N\theta_N}Q_N(\bs\bt)\right)+O_{L^p}\left(\frac{1}{N\theta_N}\right)\Bigg) \Bigg]
\end{eqnarray}
for every $p\ge 1$.
Next, note that $R_N(\bs,\bt) = \tilde{R}_N(\bs,\bt) - R_N'(\bs,\bt)$, where
\begin{eqnarray*}
    \tilde{R}_N(\bs,\bt)&=& \frac{\beta^2}{\theta_N N^2} \sum_{i>j}W_{ij}\sigma_i\sigma_j\tau_i\tau_j,\quad\text{and}\\
    R_N'(\bs,\bt)&=&\frac{\beta^2}{N^2} \sum_{i>j} W_{ij}^2\sigma_i\sigma_j\tau_i\tau_j.
\end{eqnarray*}
Note that $|\tilde{R}_N(\bs,\bt)|=\beta^2 N\theta_N Q_N(\bs\bt)$. Therefore, by Lemma \ref{lem:Qnlp}, $\tilde{R}_N(\bs,\bt)=O_{L^p}(1/N\theta_N)$. It also follows from the proof of Lemma A.8 in \cite{mukherjee2026isinginference}, that the moments of $N\theta_N |R_N'(\bs,\bt)|$ are bounded, and hence, $R_N'(\bs,\bt)=O_{L^p}(1/N\theta_N)$. Consequently, $R_N(\bs,\bt) = O_{L^p}(1/N\theta_N)$, too. Combining this with Lemma \ref{lem:Qnlp}, we have the following from \eqref{vznf_expansion} for every $p\ge 1$:

\begin{equation}\label{1ststsmpl4}
    \var \hat Z^f_N(\beta,t) = \E_{(\bs, \bt) \sim \nu}\Bigg[ e^{\frac{t}{N}\bs^T \bldf \bldf^T\bs+\frac{t}{N}\bt^T \bldf \bldf^T\bt}\E T(\bs) \E T(\bt) O_{L^p}\left(\frac{1}{N\theta_N}\right)\Bigg]
\end{equation}

Next, by Lemma \ref{lem:expect2}, there exists a deterministic sequence $a_N>0$ such that, uniformly in $\bs$,
\begin{equation}\label{ETsigma_an}
    \E T(\bs)
    =
    a_N
    \exp\left(
        \frac{\beta}{N}\sum_{i>j}W_{ij}\sigma_i\sigma_j
        +O(Q_N(\bs))
    \right).
\end{equation}
Also,
\begin{equation*}
    \frac{t}{N}\bs^T\bldf\bldf^T\bs
    =
    \frac{t}{N}\sum_{i=1}^N f_i^2
    +
    \frac{2t}{N}\sum_{i>j}f_i f_j\sigma_i\sigma_j .
\end{equation*}

Moreover, the argument leading to
\cite[Eq.~(A.51)]{mukherjee2026isinginference} gives, for every
fixed $L>0$,
\[
    \limsup_{N\to\infty}
    \E_{(\bs,\bt)\sim\nu}
    \exp\left\{
        L\bigl(Q_N(\bs)+Q_N(\bt)\bigr)
    \right\}
    <\infty.
\]
Consequently, by H\"older's inequality, the factor
\(
    \exp\left\{
        O(Q_N(\bs))+O(Q_N(\bt))
    \right\}
\)
arising from \eqref{ETsigma_an} can be absorbed into the
$O_{L^p(\nu)}(1/(N\theta_N))$ term, after increasing the moment
order if necessary. Thus, by \eqref{1ststsmpl4},
\eqref{ETsigma_an}, and Lemma~\ref{lem:Qnlp}, we have:

\begin{equation}\label{rawvar_bound_with_an}
    \begin{aligned}
    \var \hat Z^f_N(\beta,t)
    \lesssim &
    ~ a_N^2
    \exp\left(\frac{2t}{N}\sum_{i=1}^N f_i^2\right)  \\
    &\times
    \E_{(\bs,\bt)\sim \nu}\left[
    \exp\left(\frac{1}{N}\sum_{i>j}K\left(\frac{i}{N},\frac{j}{N}\right)(\sigma_i\sigma_j+\tau_i\tau_j)\right)
    O_{L^p(\nu)}\left(\frac{1}{N\theta_N}\right)\right],
    \end{aligned}
\end{equation}
for all $p\ge 1$, where recall that
\(
    K(x,y) \coloneqq \beta W(x,y) + 2t f(x) f(y).
\)
Since
\(
    \|T_{|K|}\|_{\mathrm{op}}
    \leq
    \beta\|T_W\|_{\mathrm{op}}
    +
    2|t|\|f\|_2^2
    <1,
\)
choose $\delta>0$ as in Lemma~\ref{generalexpUniInt}.
By H\"older's inequality,
\begin{equation}\label{rawvar_holder_bound}
\begin{aligned}
    \var \hat Z^f_N(\beta,t)
    \lesssim &
    ~ a_N^2
    \exp\left(\frac{2t}{N}\sum_{i=1}^N f_i^2\right) \\
    &\times \E_{(\bs,\bt)\sim \nu}\left[\exp\left(\frac{1+\delta}{N}\sum_{i>j}K\left(\frac{i}{N},\frac{j}{N}\right)(\sigma_i\sigma_j+\tau_i\tau_j)\right)\right]^{\frac{1}{1+\delta}}
    O\left(\frac{1}{N\theta_N}\right).
    \end{aligned}
\end{equation}
By Lemma \ref{generalexpUniInt}, the expectation in \eqref{rawvar_holder_bound} is bounded uniformly in $N$. Hence,
\begin{equation}\label{rawvar_final_bound}
    \var \hat Z^f_N(\beta,t)
    \leq
    a_N^2
    \exp\left(\frac{2t}{N}\sum_{i=1}^N f_i^2\right)
    O\left(\frac{1}{N\theta_N}\right).
\end{equation}

Next, recalling that $\mu$ denotes the uniform measure on $\{-1,+1\}^N$, we have the following by Lemma \ref{lem:expect2}:
\begin{equation}\label{EZf_factor}
    \E \hat Z^f_N(\beta,t)
    =
    a_N
    \exp\left(\frac{t}{N}\sum_{i=1}^N f_i^2\right)
    \E_{\mu}Y_{f,N},
\end{equation}
where
\begin{equation*}
    Y_{f,N}
    =
    \exp\left(
        \frac{1}{N}\sum_{i>j}K\left(\frac{i}{N},\frac{j}{N}\right)\sigma_i\sigma_j
        +O(Q_N(\bs))
    \right).
\end{equation*}
By Lemma \ref{circlem}, we have
\begin{equation}\label{Yf_mean_lower}
    \E_{\mu}Y_{f,N}
    \rightarrow
    \frac{1}{\sqrt{\dett(I-T_K)}}.
\end{equation}
In particular, $\E_{\mu}Y_{f,N}$ is bounded away from $0$ for all sufficiently large $N$. Combining \eqref{rawvar_final_bound} and \eqref{EZf_factor}, we therefore get
\begin{eqnarray*}
    \var\left(
        \frac{\hat Z^f_N(\beta,t)}
        {\E \hat Z^f_N(\beta,t)}
    \right)
    &=&
    \frac{\var \hat Z^f_N(\beta,t)}
    {\left(\E \hat Z^f_N(\beta,t)\right)^2} \\
    &\leq&
    \frac{
    a_N^2
    \exp\left(\frac{2t}{N}\sum_{i=1}^N f_i^2\right)
    O\left(\frac{1}{N\theta_N}\right)
    }{
    a_N^2
    \exp\left(\frac{2t}{N}\sum_{i=1}^N f_i^2\right)
    \left(\E_{\bs\sim \mu}Y_{f,N}\right)^2
    } \\
    &=&
    O\left(\frac{1}{N\theta_N}\right).
\end{eqnarray*}
 This completes the proof of Lemma \ref{var_sum}.
\end{proof}

\begin{lemma}
    \label{generalexpUniInt}
    Suppose that the entries of $\bs \coloneqq (\sigma_1,\ldots,\sigma_N)$ are i.i.d. Rademacher with mean $0$. Let $K:[0,1]^2\to\mathbb R$ be symmetric and Riemann
integrable, and suppose that
\(
    \|T_{|K|}\|_{\mathrm{op}}<1.
\)
    Then there exists $\delta >0$ such that
    \begin{equation}\label{eq:unifint622}
        \limsup_{N \to \infty} \E \left[\exp\left(\frac{1+\delta}{2N}\sum_{i\neq j}K\left(\frac{i}{N},\frac{j}{N}\right)\sigma_i \sigma_j\right)\right] < \infty.
    \end{equation}
    Consequently,
    \[
        \exp \left(\frac{1}{2 N} \sum_{i \neq j} K\left(\frac{i}{N}, \frac{j}{N}\right) \sigma_{i} \sigma_j\right)
    \]
    is uniformly integrable.
\end{lemma}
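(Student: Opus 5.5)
Since $\|T_{|K|}\|_{\mathrm{op}}<1$ and the Riemann sums of $|K|$ approximate $|K|$ in Hilbert--Schmidt norm, I would reduce to a Gaussian (Hubbard--Stratonovich) comparison. First choose $\delta>0$ with $(1+\delta)\|T_{|K|}\|_{\mathrm{op}}<1$. Write $M_N$ for the $N\times N$ matrix with entries $M_N(i,j)=\frac{1}{N}K(\frac iN,\frac jN)$ for $i\neq j$ and $0$ on the diagonal. The exponent in \eqref{eq:unifint622} is then $\frac{1+\delta}{2}\bs^\top M_N\bs$.

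Since $K$ need not be nonnegative definite, I would split $M_N=M_N^+-M_N^-$ into positive and negative spectral parts and simply drop the negative part: $\bs^\top M_N\bs\le \bs^\top M_N^+\bs$. This is wasteful, so I would instead compare directly with the diagonal-corrected matrix. Let $D_N=\mathrm{diag}(\frac1N|K|(\frac iN,\frac iN))$. Then $\bs^\top M_N\bs=\bs^\top(M_N+D_N')\bs-\tr D_N'$ for any diagonal $D'_N$ (because $\sigma_i^2=1$). For the bound, I would use the Gaussian identity
\[
e^{\frac12 \bs^\top A\bs}=\E_{\bm g}\, e^{\bs^\top A^{1/2}\bm g},
\]
valid for $A\succeq0$. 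Apply it with $A=(1+\delta)M_N^+$ and average over $\bs$ using $\E\cosh\le e^{x^2/2}$. This gives
\[
\E_\bs e^{\frac{1+\delta}{2}\bs^\top M_N^+\bs}\le \E_{\bm g}e^{\frac{1+\delta}{2}\bm g^\top M_N^+\bm g}=\prod_j(1-(1+\delta)\lambda_j^+)^{-1/2},
\]
which is finite provided $(1+\delta)\lambda_{\max}(M_N)<1$.

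The key step is that this product is not uniformly bounded unless one also uses the zero diagonal. Expanding,
\[
-\tfrac12\sum_j\log(1-c\lambda_j)=\tfrac c2\sum_j\lambda_j+O\Bigl(\sum_j\lambda_j^2\Bigr).
\]
Here $\sum_j\lambda_j^2=\|M_N\|_F^2\to\|K\|_{L^2}^2<\infty$. However, $\sum_j\lambda_j^+$ is uncontrolled, even though $\tr M_N=0$. To handle this, I would avoid the positive-part split. Instead, dominate $\bs^\top M_N\bs$ via the representation $\E_\bs e^{\frac c2\bs^\top M_N\bs}$ with the Gaussian trick applied to $cM_N+\Lambda I$, where $\Lambda=\|M_N\|_{\mathrm{op}}$ is a shift making it positive semidefinite. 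This also fails, since $e^{-N\Lambda}$ corrections appear. The cleaner route, which I would actually follow, is to work directly with the \emph{absolute} kernel. By the entrywise bound and a moment expansion, $\E e^{\frac c2\bs^\top M_N\bs}$ has the power series $\sum_k\frac{(c/2)^k}{k!}\E(\bs^\top M_N\bs)^k$. Each moment $\E(\bs^\top M_N\bs)^k$ is a sum over closed index structures in which every vertex has even degree, and it is therefore bounded in absolute value by the corresponding expression for $|M_N|$. This gives
\[
\E e^{\frac c2\bs^\top M_N\bs}\le \E e^{\frac c2\bs^\top |M_N|\bs},
\]
reducing the problem to a nonnegative kernel with $\lambda_{\max}(|M_N|)\to\|T_{|K|}\|_{\mathrm{op}}$.

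For the nonnegative $|M_N|$ with zero diagonal, I would apply the Gaussian bound to the decomposition above and control $\prod_j e^{-c\lambda_j/2}(1-c\lambda_j)^{-1/2}$, which is a $\dett$-type product. I would account for the trace term exactly through the standard second-moment/Wick comparison, as in \cite{mukherjee2026isinginference} (Eq.~(A.51) style). This comparison yields the bound $\exp\{O(\|K\|_{L^2}^2)\}\,\dett(I-cT_{|K|_N})^{-1/2}$, which is uniformly bounded in $N$ once $\lambda_{\max}(|M_N|)<1/c$. Here $|K|_N$ denotes the step-function version of $|K|$ on the grid $\{i/N\}$. Its operator norm converges to $\|T_{|K|}\|_{\mathrm{op}}$ by Riemann integrability, since the step kernels converge to $|K|$ in $L^2$. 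The main obstacle is this trace issue: the naive Hubbard--Stratonovich bound loses $e^{\frac c2\sum\lambda_j^+}$. I would resolve it by Gaussian comparison on $\bs^\top M\bs-\E$, using cumulant bounds $\kappa_k\le C^k\tr(|M_N|^k)\le C^k\|M_N\|_F^2\|M_N\|_{\mathrm{op}}^{k-2}$ for $k\ge2$. Uniform integrability then follows from the $1+\delta$ moment bound by de la Vallée Poussin.
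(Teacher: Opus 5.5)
\paragraph{Where the proposal breaks.} Much of your skeleton matches the paper. You reduce the signed kernel to $|K|$ by the parity (even-degree) expansion. You compare with a Gaussian quadratic form and use the zero diagonal so that the Gaussian m.g.f.\ becomes a $\det_2$-type product bounded by $\exp\{C\operatorname{tr}(|M_N|^2)\}$. You get the margin from $L^2$ convergence of the step kernels. Your first reduction, $\mathbb{E}\,e^{\frac c2\bs^\top M_N\bs}\le\mathbb{E}\,e^{\frac c2\bs^\top|M_N|\bs}$, is correct and is essentially the paper's $\tanh$/even-subgraph step.

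The gap is the decisive Rademacher-to-Gaussian comparison for $|M_N|$:
\begin{itemize}
\item The Hubbard--Stratonovich bound you display needs $A\succeq 0$. But $|M_N|$ has zero trace and is nonzero, so it is indefinite, and that bound does not apply to it.
\item The appeal to a ``Wick comparison as in (A.51)'' is not an argument.
\item The mechanism you finally adopt, $\kappa_k\le C^k\operatorname{tr}(|M_N|^k)$, is false. For $K\equiv a\in(0,1)$ you have $\bs^\top M_N\bs=\frac aN(S^2-N)$ with $S=\sum_i\sigma_i$. Its cumulants converge to those of $a(Z^2-1)$, namely $a^k2^{k-1}(k-1)!$, while $\operatorname{tr}(M_N^k)\to a^k$, so no fixed $C$ works.
\item Even with the missing $(k-1)!$ inserted, summing the cumulant series gives only the threshold $cC\,\||M_N|\|_{\mathrm{op}}<2$. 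This matches the hypothesis $(1+\delta)\|T_{|K|}\|_{\mathrm{op}}<1$ only if $C=2$, i.e.\ only if Rademacher cumulants are dominated by Gaussian ones, which is exactly what needs proving.
\item Separately, the operator norm appearing there must be $\||M_N|\|_{\mathrm{op}}$, not $\|M_N\|_{\mathrm{op}}$, since the hypothesis is on $T_{|K|}$.
\end{itemize}

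\paragraph{The missing step.} Your own parity argument already closes this, applied at the level of moments and with no positivity of $|M_N|$ required. Expand $\mathbb{E}(\bs^\top|M_N|\bs)^k$ and $\mathbb{E}(\boldsymbol{g}^\top|M_N|\boldsymbol{g})^k$ over index tuples. The coefficients are nonnegative. Each Rademacher mixed moment $\mathbb{E}\prod_i\sigma_i^{m_i}$ is $0$ or $1$. The Gaussian one vanishes in exactly the same cases and equals $\prod_i(m_i-1)!!\ge 1$ otherwise. So the moments are dominated term by term. Summing the exponential series is legitimate because, for large $N$, $c\,\||M_N|\|_{\mathrm{op}}=c\,\lambda_{\max}(|M_N|)\le 1-\varepsilon$ by Perron--Frobenius. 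This gives
\[
\mathbb{E}\,e^{\frac{c}{2}\bs^\top|M_N|\bs}
\le\mathbb{E}\,e^{\frac{c}{2}\boldsymbol{g}^\top|M_N|\boldsymbol{g}}
=\prod_j\bigl(1-c\lambda_j\bigr)^{-1/2}
=\prod_j e^{-c\lambda_j/2}\bigl(1-c\lambda_j\bigr)^{-1/2}
\le\exp\bigl\{C_\varepsilon c^2\operatorname{tr}(|M_N|^2)\bigr\}.
\]
Here $\lambda_j=\lambda_j(|M_N|)$, and the middle equality holds because $\operatorname{tr}|M_N|=0$. No trace term is lost, so no cumulant estimates are needed, and $\operatorname{tr}(|M_N|^2)=N^{-2}\sum_{i\ne j}K(\tfrac iN,\tfrac jN)^2=O(1)$. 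This is precisely the Gaussian-domination step the paper imports from Lemma~7.1 of \cite{bhattacharya2018inference}.
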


\begin{proof}
    Let $\mu$ denote the uniform measure on $\{-1,+1\}^{N}$. For $\gamma>0$, define
    \begin{equation*}
        Z^{K}_N(\gamma) \coloneqq  \E_\mu \left[\exp\left(\frac{\gamma}{2N}\sum_{i\neq j}K\left(\frac{i}{N},\frac{j}{N}\right)\sigma_i \sigma_j\right)\right].
    \end{equation*}
    To begin with, note that
\begin{align*}
    Z^K_N(\gamma)
    =& \E_\mu\left[\prod_{i>j} \left(\cosh\left(\frac{\gamma}{N} K\left(\frac{i}{N},\frac{j}{N}\right)\right)+\sigma_i\sigma_j \sinh\left(\frac{\gamma}{N} K\left(\frac{i}{N},\frac{j}{N}\right)\right)\right)\right]                            \\
    =& \prod_{i>j} \cosh\left(\frac{\gamma}{N} K\left(\frac{i}{N},\frac{j}{N}\right)\right)
    \E_\mu\left[\prod_{i>j} \left(1+\sigma_i\sigma_j \tanh\left(\frac{\gamma}{N} K\left(\frac{i}{N},\frac{j}{N}\right)\right)\right)\right].
\end{align*}
For ease of notation, define
\begin{equation}
    \label{eq:defZ_Ntilde}
    \tilde{Z}^K_N(\gamma) \coloneqq \E_\mu\left[\prod_{i>j} \left(1+\sigma_i\sigma_j \tanh\left(\frac{\gamma}{N} K\left(\frac{i}{N},\frac{j}{N}\right)\right)\right)\right].
\end{equation}
Then
\begin{equation}\label{exprlogzn7}
    Z^K_N(\gamma) =  \tilde{Z}^K_N(\gamma)\prod_{i>j}\cosh\left(\frac{\gamma}{N} K\left(\frac{i}{N},\frac{j}{N}\right)\right).
\end{equation}
Now, define
\[
    \tilde{\bK}_N(i,j)=
    \begin{cases}
        \tanh\left(\frac \gamma N K\left(\frac{i}{N}, \frac{j}{N}\right)\right), & i\neq j,\\
        0, & i=j.
    \end{cases}
\]
It follows from \eqref{eq:defZ_Ntilde} that
\[
    \tilde{Z}^K_N(\gamma)
    =
    \sum_{\Gamma \in \cE_N} \prod_{(i,j) \in E(\Gamma)} \tilde{\bK}_N(i,j),
\]
where $\cE_N$ denotes the collection of all spanning subgraphs of the complete graph on $N$ vertices
in which every vertex has even degree. Therefore,
\begin{align*}
    |\tilde{Z}^K_N(\gamma)|
    &\leq \sum_{\Gamma \in \cE_N} \prod_{(i,j) \in E(\Gamma)} |\tilde{\bK}_N(i,j)|\\
    &= \sum_{\Gamma \in \cE_N} \prod_{(i,j) \in E(\Gamma)}
    \tanh\left(\frac \gamma N \left|K\left(\frac{i}{N}, \frac{j}{N}\right)\right|\right)
    =
    \tilde{Z}^{|K|}_N(\gamma).
\end{align*}
Since $\cosh$ is an even function, it follows from \eqref{exprlogzn7} that
\[
    Z_N^K(\gamma)\leq  |\tilde{Z}^K_N(\gamma)|    \prod_{i>j} \cosh\left(\frac{\gamma}{N} \left|K\left(\frac{i}{N},\frac{j}{N}\right)\right|\right) \leq \tilde{Z}^{|K|}_N(\gamma)    \prod_{i>j} \cosh\left(\frac{\gamma}{N} \left|K\left(\frac{i}{N},\frac{j}{N}\right)\right|\right) = Z_N^{|K|}(\gamma).
\]

Next, define the matrix $\bK_N$ by
\[
    \bK_N(i,j)=
    \begin{cases}
        \left|K\left(\frac{i}{N}, \frac{j}{N}\right)\right|, & i\neq j,\\
        0, & i=j.
    \end{cases}
\]
Choose $\delta>0$ small enough so that with $\gamma=1+\delta$, we have
$\gamma\|T_{|K|}\|_{\mathrm{op}}<1.$
Let
\(
    I_{i,N}\coloneq\left(\frac{i-1}{N},\frac{i}{N}\right],
\)
and define the step kernel
\[
    K_N^\circ(x,y)
    \coloneq
    \sum_{\substack{1\leq i,j\leq N\\i\neq j}}
    \left|
        K\left(\frac{i}{N},\frac{j}{N}\right)
    \right|
    \mathbf 1_{I_{i,N}}(x)\mathbf 1_{I_{j,N}}(y).
\]
Then
\(
    \|T_{K_N^\circ}\|_{\mathrm{op}}
    =
    \frac1N\|\bK_N\|_{\mathrm{op}}.
\)
Since $|K|$ is bounded and Riemann integrable, we have
\[
    \|K_N^\circ-|K|\|_{L^2}
    \longrightarrow0,
\]
and hence,
\[
    \left|
        \frac1N\|\bK_N\|_{\mathrm{op}}
        -
        \|T_{|K|}\|_{\mathrm{op}}
    \right|
    \leq
    \|T_{K_N^\circ}-T_{|K|}\|_{\mathrm{op}}
    \leq
    \|K_N^\circ-|K|\|_{L^2}
    \longrightarrow0.
\]
Therefore,
we have the following for some $\varepsilon>0$ and all large enough $N$,
\[
    \frac{\gamma}{N}\|\bK_N\|_{\mathrm{op}}<1-\varepsilon.
\]
Moreover, by the Gaussian domination argument used in the proof of Lemma 7.1 of \cite{bhattacharya2018inference}, and the nonnegativity of the entries of $\bK_N$,
\[
    Z_N^K(\gamma)
    \leq
    Z_N^{|K|}(\gamma)
    \leq
    \E\left[\exp \left(\frac{\gamma}{2N}  \bm Z^{\top} \bK_N \bm Z\right)\right],
\]
where $\bm Z \sim \cN(0, I_N)$.
Let $\lambda_1(\bK_N),\ldots,\lambda_N(\bK_N)$ be the eigenvalues of $\bK_N$. Then
\[
    \E\left[\exp \left(\frac{\gamma}{2N}  \bm Z^{\top} \bK_N \bm Z\right)\right]
    =
    \prod_{i=1}^N
    \left(1-\frac{\gamma\lambda_i(\bK_N)}{N}\right)^{-1/2}.
\]
Since $\tr(\bK_N)=0$, we have $\sum_i \lambda_i(\bK_N)=0$, and hence
\begin{equation}\label{logexpan9}
    \log \E\left[\exp \left(\frac{\gamma}{2N}  \bm Z^{\top} \bK_N \bm Z\right)\right]
    =
    \frac12
    \sum_{i=1}^{N}
    \left[
        -\log \left(1-\frac{\gamma \lambda_i(\bK_N)}{N}\right)
        -
        \frac{\gamma \lambda_i(\bK_N)}{N}
    \right].
\end{equation}
Now, for every $\varepsilon>0$, there exists $C_\varepsilon<\infty$ such that
\[
    -\log(1-x)-x \leq C_\varepsilon x^2
    \qquad\text{whenever } |x|<1-\varepsilon.
\]
Using this in \eqref{logexpan9}, we get
\[
    \log \E\left[\exp \left(\frac{\gamma}{2N}  \bm Z^{\top} \bK_N \bm Z\right)\right]
    \leq
    \frac{C_\varepsilon\gamma^2}{2N^2}\tr(\bK_N^2).
\]
Finally, note that
\[
    \frac{1}{N^2}\tr(\bK_N^2)
    =
    \frac{1}{N^2}\sum_{i\neq j}
    K\left(\frac{i}{N},\frac{j}{N}\right)^2
    =
    O(1),
\]
since the function $K$ is bounded. Therefore,
\[
    \limsup_{N\to\infty} Z_N^K(1+\delta)<\infty.
\]
This proves \eqref{eq:unifint622}. For the final assertion, note that if we define
\[
    X_N
    =
    \exp \left(\frac{1}{2 N} \sum_{i \neq j} K\left(\frac{i}{N}, \frac{j}{N}\right) \sigma_{i} \sigma_j\right),
\]
then we have just shown that 
$\E X_N^{1+\delta}
    =
    O(1)$.
This is enough to establish uniform integrability of $\{X_N\}_{N\geq1}$, thereby completing the proof of Lemma \ref{generalexpUniInt}.
\end{proof}

\begin{lemma}
\label{lem:fourthmomentinterpolated}
Suppose that $\widetilde S_N$ denotes the linearly interpolated process
\eqref{interpolated66}. Then 
\[
        \sup_{N\ge 1, ~0\le s<t\le 1}
        (t-s)^{-2}~\mathbb E\left[
        \left(
        \widetilde S_N(t)-\widetilde S_N(s)
        \right)^4
        \,\middle|\, A_N
        \right]
        <\infty
\]
almost surely.
\end{lemma}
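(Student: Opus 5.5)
Note first that $\widetilde S_N(t)-\widetilde S_N(s)=\langle \bm c,\bs\rangle$, where $\bm c=\bm c(s,t)\in[0,\infty)^N$ has entries $c_i=N^{-1/2}\bigl(((Nt-i+1)_+\wedge1)-((Ns-i+1)_+\wedge1)\bigr)\in[0,N^{-1/2}]$. These entries are supported on at most $\lceil N(t-s)\rceil+1$ indices, and $\sum_i c_i = \sqrt N (t-s)$. Hence
\[
\|\bm c\|_2^2\le \min\Bigl\{\tfrac{N(t-s)+2}{N},\,\sqrt N (t-s)\cdot N^{-1/2}\Bigr\}\lesssim (t-s)\wedge\bigl(\max_i c_i\bigr)\textstyle\sum_i c_i\le 2(t-s).
\]
More carefully, $\|\bm c\|_2^2\le \|\bm c\|_\infty\|\bm c\|_1\le N^{-1/2}\cdot\sqrt N(t-s)=t-s$. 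So it suffices to prove the bound
\[
\mathbb E\bigl[\langle \bm c,\bs\rangle^4\mid A_N\bigr]\le C_\omega\|\bm c\|_2^4
\]
uniformly over nonnegative vectors $\bm c$ and over $N$, almost surely.

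To get this fourth-moment bound, I would go through the same ferromagnetic Gaussian-comparison route the paper already uses for the second moment. The key inputs are the following.

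\emph{Step 1 (good event).} On the event $\mathcal G_N=\{\|W_{A_N}/\theta_N-W\|_{\op}<\varepsilon\}$, the rescaled interaction matrix $J=\beta A_N/(N\theta_N)$ has $\|J\|_{\op}\le q<1$. Since the statement is almost sure, I need $\mathcal G_N$ to hold eventually a.s., not just with probability tending to one. I would obtain this from a matrix Bernstein or Bandeira--van Handel bound. Under $\theta_N\gtrsim N^{-2/3}$ this gives $\|A_N-\mathbb EA_N\|_{\op}/(N\theta_N)\lesssim (N\theta_N)^{-1/2}\sqrt{\log N}$ with failure probability summable in $N$. Borel--Cantelli then yields $\Omega_0$ on which $\mathcal G_N$ holds for all $N\ge N_0(\omega)$. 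For the finitely many $N<N_0(\omega)$, the trivial bound $\mathbb E\langle \bm c,\bs\rangle^4\le \|\bm c\|_1^4\le N^2\|\bm c\|_2^4$ is absorbed into $C_\omega$.

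\emph{Step 2 (moment bound on the good event).} Because the model is ferromagnetic ($A_N\ge0$) and $\bm c\ge0$, the Gaussian domination argument behind \eqref{reqpr66} should give, for $|\lambda|$ small relative to $\|\bm c\|_2^{-1}$,
\[
\mathbb E_N e^{\lambda\langle \bm c,\bs\rangle}\le \frac{\mathbb E\exp\{\tfrac12\bm Z^\top J\bm Z+\lambda\langle \bm c,\bm Z\rangle\}}{\text{normalization}}\lesssim \exp\Bigl\{\frac{\lambda^2\|\bm c\|_2^2}{2(1-q)}\Bigr\}.
\]
Concretely, I would apply the Hubbard--Stratonovich transform $e^{\frac12\bs^\top J\bs}=\mathbb E_{\bm g}e^{\langle \bm g,J^{1/2}\bs\rangle}$ after shifting $J$ by a diagonal to make it positive semidefinite. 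The cosh product is then bounded by Gaussian exponentials. The denominator $Z_N$ is bounded below, either by Jensen's inequality or via Lemma~\ref{circlem}. This subgaussian tail gives the fourth moment $\lesssim\|\bm c\|_2^4/(1-q)^2$.

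An alternative to Step 2 is to use Lebowitz or Gaussian-type inequalities for ferromagnets, $u_4\le0$. These give
\[
\mathbb E\langle \bm c,\bs\rangle^4\le 3\bigl(\mathbb E\langle \bm c,\bs\rangle^2\bigr)^2\le \frac{3}{(1-q)^2}\|\bm c\|_2^4
\]
for $\bm c\ge0$ and zero external field. Combined with \eqref{reqpr66} this is immediate, so I would try this route first.

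\textbf{Main obstacle.} I expect the hardest part to be the almost-sure (rather than in-probability) control of the operator norm in Step 1 down to sparsity $N^{-2/3}$. A secondary concern is that the Lebowitz inequality requires nonnegative couplings. That holds here, and the diagonal terms of $A_N$ contribute only constants to the Gibbs measure and can be ignored.
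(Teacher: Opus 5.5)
Your reduction and bookkeeping match the paper's proof. The increment is $\langle\bm c,\bs\rangle$ with $\bm c\ge0$ and $\|\bm c\|_2^2\le\|\bm c\|_\infty\|\bm c\|_1\le t-s$. The good event holds for all $N\ge N_0(\omega)$ almost surely, by matrix Bernstein plus Borel--Cantelli (Lemma~\ref{lem:empirical-graphon-operator-convergence}). Lebowitz's inequality gives $\mathbb E_N\langle\bm c,\bs\rangle^4\le 3(\mathbb E_N\langle\bm c,\bs\rangle^2)^2$, and the finitely many $N<N_0(\omega)$ are absorbed into $C_\omega$.

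\textbf{The gap.} You take the second-moment bound \eqref{reqpr66} as already available. In the paper it is proved \emph{inside} this lemma and is its substantive step, so citing it leaves the core unproved. The mechanism you attribute to it, Gaussian domination, does not work across the sparse range. Suppose you bound the numerator and denominator of $\mathbb E_N e^{\lambda\langle\bm c,\bs\rangle}$ (or of $\mathbb E_N[\sigma_i\sigma_j]$) separately by Gaussian integrals. You are then left with the prefactor $\det(I-J)^{-1/2}/Z_N$, where $J=\beta\bar A_N/(N\theta_N)$. Because $J$ is entrywise nonnegative with zero diagonal, $\log\det(I-J)^{-1/2}=\sum_{k\ge2}\operatorname{tr}(J^k)/(2k)$, and every term is nonnegative.
\begin{itemize}
\item Against Jensen's bound $Z_N\ge1$, the $k=2$ term alone is of order $1/\theta_N$.
\item The better bound $Z_N\ge\prod_{i<j}\cosh J_{ij}$ cancels the $k=2$ term up to $O(1)$. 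But the $k=4$ term is at least $\tfrac18\sum_i(\sum_j J_{ij}^2)^2\asymp 1/(N\theta_N^2)$, coming from backtracking walks $i\to j\to i\to l\to i$ that the Gaussian comparison overcounts because it ignores $\sigma_i^2=1$. This is unbounded when $\theta_N\ll N^{-1/2}$, so even this bound does not control the prefactor down to $\theta_N\asymp N^{-2/3}$.
\item Lemma~\ref{circlem} does not help either. It concerns an annealed expectation with the deterministic kernel $\beta W$, not the quenched $Z_N$.
\end{itemize}
Your Step~2 MGF route fails for the same reason.

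\textbf{What the paper does instead.} It bounds two-point functions directly, without comparing partition functions. Set $K_N=\beta\bar A_N/(N\theta_N)$ and let $B_N=\tanh(K_N)$ entrywise. The high-temperature path expansion for ferromagnets gives
\[
\mathbb E_N[\sigma_i\sigma_j]\le\sum_{\gamma:i\to j}\prod_{e\in\gamma}B_N(e)\le\sum_{m\ge1}B_N^m(i,j).
\]
Since $0\le B_N\le K_N$ entrywise, Perron--Frobenius gives $\|B_N\|_{\mathrm{op}}\le\rho(K_N)\le\beta\|W_{A_N}/\theta_N\|_{\mathrm{op}}\le q<1$ on the good event. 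Hence $\mathbb E_N[\sigma_i\sigma_j]\le (I-B_N)^{-1}_{ij}$, and $\bm u^\top(I-B_N)^{-1}\bm u\le\|\bm u\|_2^2/(1-q)$ for $\bm u\ge0$. This needs only $\|B_N\|_{\mathrm{op}}<1$, not control of $\operatorname{tr}(J^k)$, which is why it survives down to $\theta_N\asymp N^{-2/3}$. With this step supplied, your Lebowitz route is exactly the paper's argument.
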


\begin{proof}
To begin with, define the empirical step-kernel associated with $A_N$ by
\begin{equation}\label{eq:Waneq6}
        W_{A_N}(x,y)
        :=
        \sum_{i,j=1}^N
        A_N(i,j)\mathbf 1\{\lceil Nx\rceil = i, \lceil Ny\rceil = j \}.
\end{equation}
For a kernel $U$, write
\[
        \|U\|_{\mathrm{op}}:=\|T_U\|_{L^2\to L^2}.
\]
By Lemma \ref{lem:empirical-graphon-operator-convergence}, the event
\[
        \mathcal G
        :=
        \left\{
        \left\|
        \frac{W_{A_N}}{\theta_N}-W
        \right\|_{\mathrm{op}}
        \rightarrow 0
        \right\}
        \in \sigma\bigl(\{A_N\}_{N\ge1}\bigr)
\]
has probability $1$. Choose $\varepsilon>0$, such that
\(
        q:=\beta(\lambda_1(W)+\varepsilon)<1,
\)
where $\lambda_1(W)=\|T_W\|_{\mathrm{op}}=\|W\|_{\mathrm{op}}$
(indeed, since $W\geq0$, the operator $T_W$ is positivity preserving,
so its spectral radius is a nonnegative eigenvalue of maximal modulus). Fix
$\omega\in\mathcal G$, whence there exists
a positive integer $N_0 \omega$ such that for all $N\ge N_0 \omega$,
\begin{equation}\label{pf172}
       \left\|
        \frac{W_{A_N \omega}}{\theta_N}-W
        \right\|_{\mathrm{op}}
        <\varepsilon .
\end{equation}

Let $\bar A_N$ be the matrix obtained from $A_N$ by setting its diagonal entries equal
to zero. Since the diagonal terms contribute only a constant to the Hamiltonian, the spin law conditional on $A_N$
may be written as
\[
        \mathbb P_N(\bs)
        \propto
        \exp\left\{
        \sum_{1\le i<j\le N}
        K_N(i,j)\sigma_i\sigma_j
        \right\},
        \qquad
        K_N(i,j):=
        \frac{\beta}{N\theta_N}\bar A_N(i,j).
\]
Further, define the symmetric nonnegative matrix
\[
        B_N(i,j):=
        \begin{cases}
        \tanh K_N(i,j), & i\ne j,\\
        0, & i=j.
        \end{cases}
\]
Since $0\le B_N(i,j)\le K_N(i,j) \le \frac{\beta}{N\theta_N} A_N(i,j)$ for all $i,j$, the Perron--Frobenius
theorem gives
\begin{equation}\label{pf272}
     \|B_N\|_{\mathrm{op}} = \rho(B_N)
        \le
        \rho(K_N)
        =
        \frac{\beta}{N\theta_N}\lambda_1(\bar A_N) \le \frac{\beta}{N\theta_N}\lambda_1(A_N).
\end{equation}
Hence, for every $N\ge N_0 \omega$, it follows from \eqref{pf172} and \eqref{pf272} that at $\omega$:
\[
\begin{aligned}
        \|B_N\|_{\mathrm{op}}
       \le
        \frac{\beta}{N\theta_N}\lambda_1(A_N)       =
        \beta
        \left\|
        \frac{W_{A_N}}{\theta_N}
        \right\|_{\mathrm{op}}                       \le
        \beta\left(
        \|W\|_{\mathrm{op}}
        +
        \left\|
        \frac{W_{A_N}}{\theta_N}-W
        \right\|_{\mathrm{op}}
        \right)                                      \le
        \beta(\lambda_1(W)+\varepsilon)
        =
        q<1.
\end{aligned}
\]
We therefore have the following expansion for all $N \ge N_0\omega$:
\begin{equation}\label{expansionmatrix888}
      \Gamma_N\omega:=(I-B_N\omega)^{-1}
        =
        \sum_{m\ge0}B_N^m\omega .
\end{equation}
Also, $\Gamma_N$ is entrywise nonnegative and it follows from \eqref{expansionmatrix888}, that
\(
        \|\Gamma_N \omega\|_{\mathrm{op}}
        \le
        \frac1{1-q}
\)
for all $N \ge N_0\omega$.

Now, for $i\ne j$, it follows from \cite[Eq.~(46)]{SaadeKrzakalaZdeborova2017} (also see \cite[Section 2.2.1]{DuminilCopin2016}) that
\[
        \mathbb E_N[\sigma_i\sigma_j]
        \le
        \sum_{\gamma \in P(i\to j)}
        \prod_{e\in E(\gamma)} B_N(e) \le \sum_{m\ge 1}\sum_{1\le i_1,\ldots,i_{m-1}\le N} B_N(i,i_1) B_N(i_1,i_2)\ldots B_N(i_{m-1},j),
\]
where $P(i\to j)$ denotes the set of all paths from $i$ to $j$, $E(\gamma)$ denotes the edge-set of $\gamma$, and $\e_N(\cdot)$ denotes the conditional expectation operator $\e(\cdot|A_N)$. Therefore, for all $i\ne j$, we have for all $N \ge N_0\omega$:
\[
        \mathbb E_N[\sigma_i\sigma_j]\omega
        \le
        \sum_{m\ge1} B_N^m(i,j)\omega
        \le
        \Gamma_N(i,j)\omega.
\]
Also, for $i=j$, we trivially have:
\[
        \mathbb E_N[\sigma_i^2]=1\le \Gamma_N (i,i)\omega.
\]
Thus, for every $N\ge N_0 \omega$, we have \(
        \mathbb E_N[\sigma_i\sigma_j]\omega
        \le
        \Gamma_N(i,j)\omega
\) for all $i,j$,
 and
consequently, for every $\bm u\in[0,\infty)^N$,

\begin{equation}\label{reqpr66}
        \mathbb E_N\langle \bm u,\bs\rangle^2 \omega
        =
        \sum_{i,j=1}^N u_i u_j\,
        \mathbb E_N[\sigma_i\sigma_j] \omega \le
        \bm u^\top \Gamma_N \omega\bm u                                      \le
        \|\Gamma_N \omega\|_{\mathrm{op}}\|\bm u\|_2^2                     \le
        \frac{\|\bm u\|_2^2}{1-q}.
\end{equation}

Next, by the Lebowitz four-point inequality \cite[p.~91, Remark~(i)]{Lebowitz1974}, we have:
\[
\begin{aligned}
        \mathbb E_N[\sigma_i\sigma_j\sigma_k\sigma_\ell]
        \le
        \mathbb E_N[\sigma_i\sigma_j]
        \mathbb E_N[\sigma_k\sigma_\ell] +
        \mathbb E_N[\sigma_i\sigma_k]
        \mathbb E_N[\sigma_j\sigma_\ell] +
        \mathbb E_N[\sigma_i\sigma_\ell]
        \mathbb E_N[\sigma_j\sigma_k],
\end{aligned}
\]
for distinct $i,j,k,\ell$.
If some indices coincide, the same displayed bound follows directly from
the facts that $\sigma_a^2=1$ for all $1\le a\le N$, and that the two-point correlations are all nonnegative (see \cite[Eq. 12]{DuminilCopin2016}). Therefore, for every $\bm u\in[0,\infty)^N$ and all $N\ge N_0 \omega$, we have:
\[
\begin{aligned}
        \mathbb E_N\langle \bm u, \bm\sigma\rangle^4
       \omega &=
        \sum_{i,j,k,\ell=1}^N
        u_i u_j u_k u_\ell\,
        \mathbb E_N[\sigma_i\sigma_j\sigma_k\sigma_\ell]\omega\\
        &\le
        3
        \left(
        \sum_{i,j=1}^N
        u_i u_j\,
        \mathbb E_N[\sigma_i\sigma_j]\omega
        \right)^2\\
        &=
        3\left(\mathbb E_N\langle \bm u,\bm \sigma\rangle^2 \omega\right)^2\\
        &\le
        \frac{3}{(1-q)^2}\|\bm u\|_2^4 .
\end{aligned}
\]
Now, recall from \eqref{interpolated66}, that
\[
        \widetilde S_N(t)
        =
        \frac1{\sqrt N}
        \sum_{i=1}^N h_{i,N}(t)\sigma_i,
        \qquad \text{where}~
        h_{i,N}(t) = \bigl(Nt-(i-1)\bigr)_+\wedge1 .
\]
For $0\le s<t\le1$, define
\[
        u_{i,N}(s,t):=
        \frac{h_{i,N}(t)-h_{i,N}(s)}{\sqrt N},
        \qquad i=1,\ldots,N.
\]
Then $\bm u(s,t) := (u_{i,N}(s,t))_{1\le i\le N} \in[0,\infty)^N$ and
\(
        \widetilde S_N(t)-\widetilde S_N(s)
        =
        \langle \bm u(s,t),\bm \sigma\rangle .
\)
Moreover,
\[
\begin{aligned}
        \|\bm u(s,t)\|_2^2
        &=
        \frac1N
        \sum_{i=1}^N
        \{h_{i,N}(t)-h_{i,N}(s)\}^2\\
        &\le
        \frac1N
        \sum_{i=1}^N
        \{h_{i,N}(t)-h_{i,N}(s)\}
        =
        t-s,
\end{aligned}
\]
because $0\le h_{i,N}(t)-h_{i,N}(s) \le1$. 
Hence, for every
$N\ge N_0 \omega$, we have:
\[
        \mathbb E_N
        \left(
        \widetilde S_N(t)-\widetilde S_N(s)
        \right)^4
        \omega
        \le
        \frac{3}{(1-q)^2}(t-s)^2 .
\]

It remains only to control the LHS of the above inequality for all $N<N_0 \omega$. Towards this, note that for every
$N$ and every $0\le s<t\le1$,
\[
        \left|
        \widetilde S_N(t)-\widetilde S_N(s)
        \right|
        \le
        \frac1{\sqrt N}
        \sum_{i=1}^N
        (h_{i,N}(t)-h_{i,N}(s))
        =
        \sqrt N\,(t-s).
\]
Therefore, for all $N < N_0\omega$, we have:
\[
        \mathbb E_N       \left(
        \widetilde S_N(t)-\widetilde S_N(s)
        \right)^4
        \omega
        \le
        N^2(t-s)^4
        <
        (N_0 \omega)^2(t-s)^2
\]
for all $N<N_0 \omega$. Thus the choice
\[
        C_\omega
        :=
        \max\left\{
        \frac{3}{(1-q)^2},
       (N_0 \omega)^2
        \right\}
\]
gives
\[
        \mathbb E_N
        \left(
        \widetilde S_N(t)-\widetilde S_N(s)
        \right)^4
        \omega
        \le
        C_\omega (t-s)^2
\]
for every $N\ge1$ and every $0\le s<t\le1$. Taking supremum over
$N\ge1$ and every $0\le s<t\le1$, we have:
\begin{equation}\label{suptsineq882}
    \sup_{N\ge 1, ~0\le s<t\le 1}
        (t-s)^{-2}~\mathbb E\left[
        \left(
        \widetilde S_N(t)-\widetilde S_N(s)
        \right)^4
        \,\middle|\, A_N
        \right]\omega
        \le C_\omega < \infty.
\end{equation}
Since \eqref{suptsineq882} is true for all $\omega\in \mathcal G$ and since $\p(\mathcal G) =1$, Lemma \ref{lem:fourthmomentinterpolated} follows.
\end{proof}

\begin{lemma}
\label{lem:empirical-graphon-operator-convergence}
Let $W_{A_N}$ be defined as in \eqref{eq:Waneq6}. Then, under Assumption~\ref{assumption12}~\textup{(2)--(3)},
\[
        \left\|
        \frac{W_{A_N}}{\theta_N}-W
        \right\|_{\mathrm{op}}
        \rightarrow 0
        \qquad\text{almost surely}.
\]
\end{lemma}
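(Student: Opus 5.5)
We split the error into a random part and a deterministic part by the triangle inequality:
\[
\left\|\frac{W_{A_N}}{\theta_N}-W\right\|_{\mathrm{op}}
\le
\frac{1}{\theta_N}\bigl\|W_{A_N}-\E W_{A_N}\bigr\|_{\mathrm{op}}
+\left\|\frac{\E W_{A_N}}{\theta_N}-W\right\|_{\mathrm{op}} .
\]
Two facts about step kernels drive everything. First, for an $N\times N$ matrix $M$, the associated step kernel $U_M(x,y)=\sum_{i,j}M(i,j)\mathbf 1\{\lceil Nx\rceil=i,\lceil Ny\rceil=j\}$ satisfies $\|T_{U_M}\|_{\mathrm{op}}=N^{-1}\|M\|_{\mathrm{op}}$. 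Second, the operator norm is dominated by the $L^2$ norm of the kernel.

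\emph{Deterministic part.} Here $\E W_{A_N}/\theta_N$ is the step kernel with value $W(i/N,j/N)$ on the cell $I_{i,N}\times I_{j,N}$, including the diagonal cells. We bound the operator norm by the $L^2$ norm, exactly as in the proof of Lemma~\ref{generalexpUniInt}. Since $W$ is bounded and Riemann integrable, its right-endpoint Riemann step approximation converges to $W$ in $L^2[0,1]^2$: the squared error is bounded by the upper-minus-lower Darboux sum of a bounded function, which tends to $0$. Hence this term is $o(1)$ deterministically. Riemann integrability of $\mathrm{diag}_W$ is not even needed, because the diagonal cells have total area $1/N$.

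\emph{Random part.} By the first fact, this term equals $\|A_N-\E A_N\|_{\mathrm{op}}/(N\theta_N)$. The matrix $A_N-\E A_N$ is symmetric, with independent centered entries on and above the diagonal. The entries are bounded by $1$ and have variance at most $\theta_N$. We apply a concentration bound for the spectral norm of such random matrices. Concretely, the Bandeira--van Handel inequality gives
\[
\E\|A_N-\E A_N\|_{\mathrm{op}}\lesssim \sqrt{N\theta_N}+\sqrt{\log N},
\]
and Talagrand's concentration inequality (for a convex $1$-Lipschitz function of bounded independent entries) gives sub-Gaussian tails of order $1$ around the mean. So with probability at least $1-N^{-2}$,
\[
\|A_N-\E A_N\|_{\mathrm{op}}\le C\bigl(\sqrt{N\theta_N}+\sqrt{\log N}\bigr).
\]
Dividing by $N\theta_N$ gives a bound of order $(N\theta_N)^{-1/2}+\sqrt{\log N}/(N\theta_N)$. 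Assumption~\ref{assumption12}(3) gives $N\theta_N\gtrsim N^{1/3}$, so this bound tends to $0$. The failure probabilities are summable, so Borel--Cantelli yields almost sure convergence. One caveat: the graphs for different $N$ need not be coupled in any particular way, but Borel--Cantelli only uses summability of the marginal failure probabilities, so this is harmless.

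\emph{Main obstacle.} The only delicate step is the spectral norm bound in the sparse regime, with a tail strong enough for Borel--Cantelli. If the Bandeira--van Handel inequality is not to be cited, a cruder alternative also suffices. Take the trace moment method with $\E\,\mathrm{tr}(A_N-\E A_N)^{2k}$ for $k\asymp\log N$; this gives $\|A_N-\E A_N\|_{\mathrm{op}}\lesssim\sqrt{N\theta_N}+\log N$ with probability $1-N^{-2}$. Since $\log N/(N\theta_N)\to 0$, this is again enough. Yet another route is matrix Bernstein, which gives $\sqrt{N\theta_N\log N}+\log N$ and likewise suffices because $N\theta_N\gtrsim N^{1/3}\gg\log N$. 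I would use matrix Bernstein, since it is the most elementary and off-the-shelf.
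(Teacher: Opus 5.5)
Your proposal is correct and is essentially the paper's proof. It uses the same split into $\|W_{A_N}/\theta_N-W_N\|_{\mathrm{op}}$ and $\|W_N-W\|_{\mathrm{op}}\le\|W_N-W\|_{L^2}\to0$, where $W_N=\mathbb E W_{A_N}/\theta_N$. It identifies the random term as $\|A_N-\mathbb E A_N\|_{\mathrm{op}}/(N\theta_N)$, then applies matrix Bernstein with variance proxy at most $N\theta_N$ and finishes with Borel--Cantelli using $N\theta_N\gtrsim N^{1/3}$. The only cosmetic difference is the threshold: the paper takes $r=\varepsilon N\theta_N$ for fixed $\varepsilon>0$, giving the summable tail $2N\exp\{-c_\varepsilon N\theta_N\}$, rather than your $r\asymp\sqrt{N\theta_N\log N}+\log N$.
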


\begin{proof}
To begin with, define the deterministic step-kernel
\[
        W_N(x,y)
        :=
        \sum_{i,j=1}^N
        W\left(\frac{i}{N},\frac{j}{N}\right)
        \mathbf 1\{\lceil Nx\rceil = i, \lceil Ny\rceil = j \},
\]
and note that \(
        \mathbb E W_{A_N}=\theta_N W_N.
\)
By the triangle inequality,
\begin{equation}
\label{eq:step-kernel-decomp}
        \left\|
        \frac{W_{A_N}}{\theta_N}-W
        \right\|_{\mathrm{op}}
        \le
        \left\|
        \frac{W_{A_N}}{\theta_N}-W_N
        \right\|_{\mathrm{op}}
        +
        \|W_N-W\|_{\mathrm{op}}.
\end{equation}
Since $W$ is Riemann integrable by
Assumption~\ref{assumption12}~\textup{(2)}, we have:
\begin{equation}\label{2nsdetbdes8}
  \|W_N-W\|_{\mathrm{op}}
        =
        \|T_{W_N-W}\|_{L^2\to L^2}
        \le
        \|W_N-W\|_{L^2} \le  \|W_N-W\|_{L^1}^{\frac12}
        \rightarrow0.
\end{equation}

It remains to control the random term in the RHS of \eqref{eq:step-kernel-decomp}. Towards this, set 
\[
K_N:=\frac{W_{A_N}}{\theta_N}-W_N
\qquad \text{and}\qquad
\widetilde A_N :=A_N-\E A_N,
\]
and let \(I_j^N :=((j-1)/N,j/N]\). Fix \(f\in L^2[0,1]\) and define
\(b_j:=N\int_{I_j^N}f(y)\,dy\). Then, for \(x\in I_i^N\),
\[
(T_{K_N}f)(x)=\frac{1}{N\theta_N}(\widetilde A_N\bm b)_i,
\]
and hence
\[
\|T_{K_N}f\|_{L^2}^2
=
\frac{\|\widetilde A_N\bm b\|_2^2}{N^3\theta_N^2}
\le
\frac{\|\widetilde A_N\|_{\op}^2}{N^2\theta_N^2}\|f\|_{L^2}^2,
\]
where in the last inequality, we used
\(\|\bm b\|_2^2\le N\|f\|_{L^2}^2\), a simple consequence of the Cauchy--Schwarz inequality. Since this is true for all $f \in L^2[0,1]$, we have:
\[
\|K_N\|_{\op}\le\frac{\|\widetilde A_N\|_{\op}}{N\theta_N}.
\]
Conversely, choose \(\bm v\in\mathbb R^N\) with
\(\|\bm v\|_2=1\) such that
\(\|\widetilde A_N\bm v\|_2=\|\widetilde A_N\|_{\op}\), and set
\(f :=\sqrt N\sum_{j=1}^Nv_j\one_{I_j^N}\). Then
\(\|f\|_{L^2}=1\), \(\bm b=\sqrt N\,\bm v\), and
\[
\|T_{K_N}f\|_{L^2}
= \frac{\|\widetilde A_N(\sqrt{N}\bm v)\|_2}{N^{3/2}\theta_N}=
\frac{\|\widetilde A_N\|_{\op}}{N\theta_N}\quad\implies \quad\|K_N\|_{\op} = \|T_{K_N}\|_\op \ge \frac{\|\widetilde A_N\|_{\op}}{N\theta_N}.
\]
Therefore, we have:
\begin{equation}\label{eq:matrix-kernel-norm}
    \left\|
\frac{W_{A_N}}{\theta_N}-W_N
\right\|_{\op}
=
\frac{\|\widetilde A_N\|_{\op}}{N\theta_N}.
\end{equation}

Throughout the rest of the proof, for notational convenience, let us write
\[
        p_{ij}:=\theta_N W\left(\frac{i}{N},\frac{j}{N}\right).
\]
For \(1\le i\le j\le N\), define
\[
X_{ij}
:=
\begin{cases}
(A_N(i,j)-p_{ij})
(e_i e_j^\top+e_j e_i^\top)\quad
& \text{if}~ i<j,\\[4pt]
(A_N(i,i)-p_{ii})e_i e_i^\top \quad
& \text{if}~i=j.
\end{cases}
\]
It is straightforward to see that
\[
        \widetilde A_N=\sum_{1\le i\le j\le N}X_{ij},
\]
where the summands are independent, self-adjoint and centered. Moreover,
\(
        \|X_{ij}\|_{\mathrm{op}}\le1.
\)
Also, a direct calculation gives
\[
        \sum_{1\le i\le j\le N}\mathbb E X_{ij}^2
        =
        \operatorname{diag}\left(
        \sum_{j=1}^N p_{ij}(1-p_{ij})
        \right)_{i=1}^N,
\]
and therefore
\begin{equation}
\label{eq:variance-proxy}
        \left\|
        \sum_{1\le i\le j\le N}\mathbb E X_{ij}^2
        \right\|_{\mathrm{op}}
        \le
        \max_{1\le i\le N}\sum_{j=1}^N p_{ij}
        \le
        N\theta_N.
\end{equation}

Applying the matrix Bernstein inequality
\cite[Theorem~1.4]{Tropp2012} to $\sum_{i\le j}X_{ij}$ and to
$-\sum_{i\le j}X_{ij}$, and using \eqref{eq:variance-proxy}, we have the following for
every $r>0$,
\[
        \mathbb P\left(\|\widetilde A_N\|_{\mathrm{op}}\ge r\right)
        \le
        2N
        \exp\left\{
        -\frac{r^2}
        {2\left(N\theta_N+r/3\right)}
        \right\}.
\]
Taking $r=\varepsilon N\theta_N$, for some fixed $\varepsilon>0$, yields
\begin{equation}
\label{eq:bernstein-adjacency}
        \mathbb P\left(
        \frac{\|\widetilde A_N\|_{\mathrm{op}}}{N\theta_N}\ge\varepsilon
        \right)
        \le
        2N\exp\{-c_\varepsilon N\theta_N\},
        \qquad \text{where}~
        c_\varepsilon :=\frac{\varepsilon^2}{2(1+\varepsilon/3)}>0.
\end{equation}
By Assumption~\ref{assumption12}~\textup{(3)}, there exists a constant $c>0$ such that
\(
        N\theta_N\ge cN^{1/3}
\)
for all sufficiently large $N$. Hence, for every fixed $\varepsilon>0$,
\[
        \sum_{N=1}^\infty
        \mathbb P\left(
        \frac{\|\widetilde A_N\|_{\mathrm{op}}}{N\theta_N}\ge\varepsilon
        \right)
        <\infty,
\]
which immediately implies that
\[
        \frac{\|\widetilde A_N\|_{\mathrm{op}}}{N\theta_N}
        \rightarrow0
        \qquad\text{almost surely}.
\]
Together with \eqref{eq:matrix-kernel-norm}, this implies that
\begin{equation}
\label{eq:random-step-conv}
        \left\|
        \frac{W_{A_N}}{\theta_N}-W_N
        \right\|_{\mathrm{op}}
        \rightarrow0
        \qquad\text{almost surely}.
\end{equation}
Combining \eqref{eq:step-kernel-decomp},
\eqref{2nsdetbdes8} and \eqref{eq:random-step-conv}, we thus conclude that
\[
        \left\|
        \frac{W_{A_N}}{\theta_N}-W
        \right\|_{\mathrm{op}}
        \rightarrow0
        \qquad\text{almost surely},
\]
 which completes the proof of Lemma \ref{lem:empirical-graphon-operator-convergence}.
\end{proof}

\begin{lemma}\label{soblimexists}
    In the notations of Section \ref{sobolevcan8}, the limit
\(
\iota_s\nu
:=
\lim_{m\to\infty}\iota_m\nu
\)
exists in \(H^{-s}(0,1)\) for every $s>1/2$. Further, the map
\(
\iota_s:\mathcal M([0,1])\to H^{-s}(0,1)
\)
is linear and injective.
\end{lemma}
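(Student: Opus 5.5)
The plan is to show first that $(\iota_m\nu)_{m\ge0}$ is Cauchy in $H^{-s}(0,1)$; linearity and injectivity then follow separately. Fix $\nu\in\mathcal M([0,1])$ and $s>1/2$. Each $\iota_m\nu$ lies in $S^\#$, and for $m<m'$ we have
\[
\|\iota_{m'}\nu-\iota_m\nu\|_{H^{-s}}^2=\sum_{k=m+1}^{m'}\kappa_k^{-s}|\widehat\nu_k|^2 .
\]
Since $\|e_k\|_\infty\le\sqrt2$, we get $|\widehat\nu_k|\le\sqrt2\,\|\nu\|_{\mathrm{TV}}$. The right-hand side is therefore at most $2\|\nu\|_{\mathrm{TV}}^2\sum_{k>m}\kappa_k^{-s}$. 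This tail tends to $0$ because $\kappa_k^{-s}\asymp k^{-2s}$ and $2s>1$. As $H^{-s}(0,1)$ is complete (it is defined as a completion), the limit $\iota_s\nu$ exists. By continuity of the norm, $\|\iota_s\nu\|_{H^{-s}}^2=\sum_k\kappa_k^{-s}|\widehat\nu_k|^2$.

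Linearity is immediate. The map $\nu\mapsto\widehat\nu_k$ is linear for each $k$, so each $\iota_m$ is linear, and limits preserve linear combinations.

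For injectivity, suppose $\iota_s\nu=0$. I will show this forces $\widehat\nu_k=0$ for all $k$. The coordinate functional $u\mapsto\langle u,e_k\rangle$ on $S^\#$ satisfies $|\langle u,e_k\rangle|\le\kappa_k^{s/2}\|u\|_{H^{-s}}$. It therefore extends continuously to $H^{-s}(0,1)$. Applying it to $\iota_m\nu$ gives $\widehat\nu_k$ for every $m\ge k$, so passing to the limit gives $\widehat\nu_k=\langle\iota_s\nu,e_k\rangle=0$. Equivalently, the norm identity above gives $\sum_k\kappa_k^{-s}|\widehat\nu_k|^2=0$.

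It remains to show that a finite signed measure on $[0,1]$ with all cosine moments $\int e_k\,d\nu=0$ must vanish. This is the only step needing a genuine fact, and I will handle it by a density argument. The substitution $y=\cos(\pi x)$ is a homeomorphism $[0,1]\to[-1,1]$. Under it, $\operatorname{span}\{\cos(\pi kx):k\ge0\}$ corresponds to the polynomials in $y$, because $\cos(\pi k x)=T_k(\cos\pi x)$ with $T_k$ the Chebyshev polynomial. By the Weierstrass approximation theorem, polynomials are uniformly dense in $C[-1,1]$. Hence the cosine span is uniformly dense in $C[0,1]$. It follows that $\int g\,d\nu=0$ for every $g\in C[0,1]$, by uniform approximation and finiteness of $|\nu|$. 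The Riesz representation theorem (uniqueness of the representing measure on the compact metric space $[0,1]$) then yields $\nu=0$.

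I expect no serious obstacle. The point to be careful about is using the full closed interval $[0,1]$, including atoms at the endpoints, as in $\eta_N$, which has an atom at $1$. The Chebyshev change of variables handles endpoints cleanly, whereas an $L^2$ completeness argument alone would not.
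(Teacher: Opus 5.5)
Your proposal is correct and takes the same route as the paper. Both arguments bound the tail by $\sum_{k>m}\kappa_k^{-s}|\widehat{\nu}_k|^2\le 2\|\nu\|_{\mathrm{TV}}^2\sum_{k>m}\kappa_k^{-s}$ using $\|e_k\|_\infty\le\sqrt2$, obtain the limit from the completeness of $H^{-s}(0,1)$, and get injectivity from $\widehat{\nu}_k=0$ for all $k$ together with uniform density of cosine polynomials in $C[0,1]$. Your continuity-of-coordinate-functionals step and the Chebyshev--Weierstrass density argument simply supply details that the paper asserts without proof.
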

\begin{proof}
    Since \(\|e_k\|_\infty\leq\sqrt{2}\), for every \(s>1/2\), we have
\[
\sum_{k=0}^{\infty}
\kappa_k^{-s}|\widehat{\nu}_k|^2
\leq
2\|\nu\|_{\mathrm{TV}}^2
\sum_{k=0}^{\infty}\kappa_k^{-s}
<\infty.
\]
Consequently, the finite cosine sums
\[
\iota_m\nu
:=
\sum_{k=0}^m\widehat{\nu}_k e_k
\]
form a Cauchy sequence in \(H^{-s}(0,1)\) for every \(s>1/2\), since
\[
\sup_{n\geq m}
\|\iota_n\nu-\iota_m\nu\|_{H^{-s}}^2
=
\sup_{n\geq m}
\sum_{k=m+1}^n
\kappa_k^{-s}|\widehat{\nu}_k|^2
\leq
\sum_{k=m+1}^{\infty}
\kappa_k^{-s}|\widehat{\nu}_k|^2
\xrightarrow[m\to\infty]{}0.
\]
Since each \(\iota_m\nu\in S^\#\) and \(H^{-s}(0,1)\) is the
completion of \(S^\#\) under \(\|\cdot\|_{H^{-s}}\), the limit
\[
\iota_s\nu
:=
\lim_{m\to\infty}\iota_m\nu
\]
exists in \(H^{-s}(0,1)\).  Moreover, if \(\iota_s\nu=0\), then
\(\widehat{\nu}_k=0\) for every \(k\geq0\). Since cosine polynomials
are uniformly dense in \(C[0,1]\), it follows that
\(\int h\,d\nu=0\) for every \(h\in C[0,1]\), and hence \(\nu=0\).
\end{proof}

\begin{lemma}\label{lchiprop8}
   In the notations of Section \ref{sec:limelem688}, the RHS of \eqref{eqchifser} converges in $L^2(\Omega^*)$. Further, \(\chi\) is a well-defined centered Gaussian linear process
on \(L^2[0,1]\) with covariance
$$\e^*[\chi(f)\chi(g)] = \langle f, R_\beta g\rangle.$$
\end{lemma}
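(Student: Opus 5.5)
The first step is to check that $R_\beta^{1/2}$ makes sense. Under Assumption~\ref{assumption12}, $T_W$ is self-adjoint with $\|\beta T_W\|_{\mathrm{op}}<1$. Hence $I-\beta T_W$ is bounded, self-adjoint, and bounded below by $1-\beta\|W\|_{\mathrm{op}}>0$. It follows that $R_\beta$ is bounded, self-adjoint and strictly positive, with $\|R_\beta\|_{\mathrm{op}}\le (1-\beta\|W\|_{\mathrm{op}})^{-1}$. Its positive square root $R_\beta^{1/2}$ is then a bounded self-adjoint operator obtained from the continuous functional calculus. Concretely, on the eigenbasis $\{\phi_i\}$ of $T_W$ (completed by an orthonormal basis of $\ker T_W$), it acts as multiplication by $(1-\beta\lambda_i)^{-1/2}$.

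Next comes the $L^2(\Omega^*)$ convergence of \eqref{eqchifser}. Fix $f\in L^2[0,1]$ and write $c_j:=\langle R_\beta^{1/2}f,\psi_j\rangle$. Since the $Z_j$ are orthonormal in $L^2(\Omega^*)$, the partial sums $\chi_n(f):=\sum_{j\le n}c_jZ_j$ satisfy $\mathbb E^*|\chi_n(f)-\chi_m(f)|^2=\sum_{j=m+1}^n c_j^2$. By Parseval, $\sum_j c_j^2=\|R_\beta^{1/2}f\|_2^2<\infty$, so the partial sums are Cauchy and converge in $L^2(\Omega^*)$ to a limit $\chi(f)$. The same isometry gives
\[
\mathbb E^*[\chi(f)\chi(g)]=\lim_n\sum_{j\le n}c_j(f)c_j(g)=\langle R_\beta^{1/2}f,R_\beta^{1/2}g\rangle=\langle f,R_\beta g\rangle,
\]
using Parseval's identity together with the self-adjointness of $R_\beta^{1/2}$.

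Linearity of $f\mapsto\chi(f)$ follows from linearity of each coefficient $c_j$, passing to the $L^2$ limit; so $\chi(af+bg)=a\chi(f)+b\chi(g)$ almost surely. By the isometry, $\chi$ is bounded from $L^2[0,1]$ into $L^2(\Omega^*)$. For Gaussianity, fix $f_1,\dots,f_k$ and $a\in\mathbb R^k$. Each $\sum_r a_r\chi_n(f_r)$ is a finite linear combination of independent standard normals, hence centered Gaussian. It converges in $L^2$, and $L^2$ limits of centered Gaussians are centered Gaussian: the characteristic functions $e^{-v_n t^2/2}$ converge, with $v_n\to v$. By Cramér--Wold, $(\chi(f_1),\dots,\chi(f_k))$ is jointly centered Gaussian with the stated covariance.

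Finally, the law does not depend on the choice of $(\psi_j)$. The finite-dimensional distributions are determined by the covariance $\langle f,R_\beta g\rangle$, which is basis-free. No step here is a real obstacle; the only point needing care is justifying that $R_\beta^{1/2}$ exists as a bounded self-adjoint operator, which is handled by strict positivity of $R_\beta$.
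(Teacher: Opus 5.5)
Your proof is correct and follows essentially the same route as the paper. Both arguments use Parseval to get $\sum_j \langle R_\beta^{1/2}f,\psi_j\rangle^2=\langle f,R_\beta f\rangle<\infty$, deduce that the partial sums are Cauchy in $L^2(\Omega^*)$, and compute the covariance as $\langle R_\beta^{1/2}f,R_\beta^{1/2}g\rangle=\langle f,R_\beta g\rangle$. The differences are minor: the paper also records almost sure convergence via Kolmogorov's two-series theorem, which the statement does not need, while you spell out the existence of $R_\beta^{1/2}$, linearity, and Gaussianity of $L^2$ limits, which the paper only asserts.
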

\begin{proof}
Setting
\(
a_j:=\langle R_\beta^{1/2}f,\psi_j\rangle
\)
and
\(Y_j:=a_jZ_j\),
an application of Parseval's identity gives
\[
\sum_{j=1}^{\infty}\operatorname{Var}(Y_j)
=
\sum_{j=1}^{\infty}|a_j|^2
=
\|R_\beta^{1/2}f\|_2^2
=
\langle f,R_\beta f\rangle
<\infty.
\]
Hence, Kolmogorov's two-series theorem implies that
\(\sum_{j\geq1}Y_j\) converges almost surely. Moreover,
\[
\sup_{n\geq m}
\mathbb E^*\left|
\sum_{j=m+1}^{n}Y_j
\right|^2
=
\sup_{n\geq m}
\sum_{j=m+1}^{n}|a_j|^2
\longrightarrow0
\qquad\text{as }m\to\infty.
\]
Thus, the partial sums are Cauchy in \(L^2(\Omega^*)\), and therefore
the series \eqref{eqchifser} converges in \(L^2(\Omega^*)\) as
well. Thus, \(\chi\) is a well-defined centered Gaussian linear process
on \(L^2[0,1]\) with covariance
\begin{eqnarray*}
\mathbb E^*[\chi(f)\chi(g)]
&=&
\mathbb E^*\left[
\left(
\sum_{j=1}^{\infty}
\langle R_\beta^{1/2}f,\psi_j\rangle Z_j
\right)
\left(
\sum_{\ell=1}^{\infty}
\langle R_\beta^{1/2}g,\psi_\ell\rangle Z_\ell
\right)
\right]
\\
&=&
\sum_{j,\ell=1}^{\infty}
\langle R_\beta^{1/2}f,\psi_j\rangle
\langle R_\beta^{1/2}g,\psi_\ell\rangle
\mathbb E^*[Z_jZ_\ell]
\\
&=&
\sum_{j=1}^{\infty}
\langle R_\beta^{1/2}f,\psi_j\rangle
\langle R_\beta^{1/2}g,\psi_j\rangle
\\
&=&
\langle R_\beta^{1/2}f,R_\beta^{1/2}g\rangle
\qquad\text{(by Parseval's identity)}
\\
&=&
\langle f,R_\beta g\rangle.
\end{eqnarray*}
Here the passage to the infinite sums is justified by their
\(L^2(\Omega^*)\)-convergence.    
\end{proof}

\begin{lemma}\label{sobactlem552}
    Consider the setup of Remark \ref{remk:sobac72}. The series in the RHS of \eqref{etahactiondef6} is absolutely convergent almost surely. Moreover, for every \(h\in H^1(0,1)\), the two random variables
\(\langle\eta,h\rangle_{-1,1}\) and \(\chi(h)\) are equal
\(\mathbb P^*\)-almost surely.
\end{lemma}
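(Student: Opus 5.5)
The plan is to bound each term of the series using the joint Gaussianity of the coefficients $\chi(e_k)$ and the decay of the cosine coefficients of $h$. Then we identify the limit with $\chi(h)$ through the $L^2(\Omega^*)$-continuity of the linear map $\chi$.

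\emph{Absolute convergence.} Each $\chi(e_k)$ is centered Gaussian with variance $\langle e_k,R_\beta e_k\rangle\le\|R_\beta\|_{\mathrm{op}}$. Hence
\[
\mathbb E^*\sum_{k=0}^\infty|\chi(e_k)||\langle h,e_k\rangle|
\le \sqrt{\|R_\beta\|_{\mathrm{op}}}\sum_{k=0}^\infty|\langle h,e_k\rangle|.
\]
Since $h\in H^1(0,1)$, Cauchy--Schwarz gives
\[
\sum_k|\langle h,e_k\rangle|\le\Bigl(\sum_k\kappa_k^{-1}\Bigr)^{1/2}\Bigl(\sum_k\kappa_k|\langle h,e_k\rangle|^2\Bigr)^{1/2}=\Bigl(\sum_k\kappa_k^{-1}\Bigr)^{1/2}\|h\|_{H^1}<\infty.
\]
This is finite because $\kappa_k^{-1}\sim(\pi k)^{-2}$ is summable. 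By monotone convergence, the random series of absolute values has finite expectation, so it is finite $\mathbb P^*$-almost surely.

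\emph{Identification with $\chi(h)$.} By Lemma \ref{lchiprop8}, $\chi$ is linear on $L^2[0,1]$ with $\mathbb E^*[\chi(f)^2]=\langle f,R_\beta f\rangle\le\|R_\beta\|_{\mathrm{op}}\|f\|_2^2$. Thus $\chi:L^2[0,1]\to L^2(\Omega^*)$ is a bounded linear map. First check linearity: for $f=\sum_j c_jf_j$ a finite combination, $\chi(f)-\sum_j c_j\chi(f_j)$ has second moment $\langle g,R_\beta g\rangle$ with $g=0$, hence it vanishes almost surely. Now set $h_n:=\sum_{k\le n}\langle h,e_k\rangle e_k$. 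By linearity, $\chi(h_n)=\sum_{k\le n}\langle h,e_k\rangle\chi(e_k)$ almost surely. Since $h_n\to h$ in $L^2[0,1]$, boundedness gives $\chi(h_n)\to\chi(h)$ in $L^2(\Omega^*)$. On the other hand, by the first step the partial sums converge almost surely (indeed in $L^1$) to $\langle\eta,h\rangle_{-1,1}$. Pass to a subsequence along which $\chi(h_n)\to\chi(h)$ almost surely; the two limits then coincide almost surely.

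The only delicate point is that $\chi$ is defined elementwise via \eqref{eqchifser}, so linearity holds only almost surely for each finite combination rather than pathwise. This is handled by the variance computation above and causes no real obstruction, since only countably many combinations $h_n$ are involved.
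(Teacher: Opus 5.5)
Your argument is correct. The identification step is the one the paper sketches: truncate to cosine partial sums, use the $L^2(\Omega^*)$-boundedness and almost sure linearity of $\chi$, and match limits along a subsequence. You simply fill in the details the paper skips.

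The absolute-convergence step is where you differ. You take expectations and use Tonelli with the $\ell^1$ bound $\sum_k|\langle h,e_k\rangle|\le(\sum_k\kappa_k^{-1})^{1/2}\|h\|_{H^1}$. The paper instead applies Cauchy--Schwarz pathwise, splitting the terms as $\kappa_k^{-1/2}|\chi(e_k)|\cdot\kappa_k^{1/2}|\langle h,e_k\rangle|$. This gives $\sum_k|\chi(e_k)\langle h,e_k\rangle|\le\|\eta\|_{H^{-1}}\|h\|_{H^1}$, which is finite because $\eta\in H^{-1}(0,1)$ almost surely.

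Your version is self-contained and even yields $L^1(\Omega^*)$-convergence of the series. Its exceptional null set, however, depends on $h$. Since there are uncountably many $h$, and the coefficients $\chi(e_k)$ are not almost surely bounded in $k$, it does not give convergence for all $h$ on one event.

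The paper's version does. On the single full-measure event $\{\sum_k\kappa_k^{-1}\chi(e_k)^2<\infty\}$, the series converges absolutely for every $h\in H^1(0,1)$. It also comes with the pathwise bound $|\langle\eta,h\rangle_{-1,1}-\langle\eta,g\rangle_{-1,1}|\le\|\eta\|_{H^{-1}}\|h-g\|_{H^1}$. This is exactly what Remark~\ref{remk:sobac72} uses to view $(\langle\eta,h\rangle_{-1,1})_{h\in\mathcal C}$ as a $C(\mathcal C)$-valued random element.

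If you want the first assertion in that uniform sense, switch to the pathwise Cauchy--Schwarz bound. The input it needs, $\mathbb E^*\sum_k\kappa_k^{-1}\chi(e_k)^2\le\|R_\beta\|_{\mathrm{op}}\sum_k\kappa_k^{-1}<\infty$, follows from the same variance bound you already use.
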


\begin{proof}
    It follows from the Cauchy--Schwarz inequality that with probability \(1\),
\[
\begin{aligned}
\sum_{k=0}^{\infty}
\left|\chi(e_k)\langle h,e_k\rangle\right|
&=
\sum_{k=0}^{\infty}
\bigl(\kappa_k^{-1/2}|\chi(e_k)|\bigr)
\bigl(\kappa_k^{1/2}|\langle h,e_k\rangle|\bigr)\\
&\leq
\left(
\sum_{k=0}^{\infty}
\kappa_k^{-1}\chi(e_k)^2
\right)^{1/2}
\left(
\sum_{k=0}^{\infty}
\kappa_k|\langle h,e_k\rangle|^2
\right)^{1/2}\\
&=
\|\eta\|_{H^{-1}}\|h\|_{H^1}
<\infty,
\end{aligned}
\]
so the series in the RHS of \eqref{etahactiondef6} is absolutely convergent almost surely. Moreover, we
have
\begin{eqnarray*}
\langle\eta,h\rangle_{-1,1}
&=&
\left\langle
\sum_{k=0}^{\infty}\chi(e_k)e_k,h
\right\rangle_{-1,1}
=
\sum_{k=0}^{\infty}
\chi(e_k)\langle h,e_k\rangle
\\
&=&
\chi\left(
\sum_{k=0}^{\infty}
\langle h,e_k\rangle e_k
\right)
=
\chi(h)
\qquad\text{in }L^2(\Omega^*).
\end{eqnarray*}
The equalities follow by passing to the finite cosine sums, using the
continuity of the \(H^{-1}\)--\(H^1\) duality pairing, the convergence
of the cosine expansion of \(h\) in \(L^2[0,1]\), and the bounded
linearity of
\(\chi:L^2[0,1]\to L^2(\Omega^*)\); we skip the details.
Thus, for each fixed \(h\in H^1(0,1)\), the two random variables
\(\langle\eta,h\rangle_{-1,1}\) and \(\chi(h)\) are equal
\(\mathbb P^*\)-almost surely.
\end{proof}

\section{Fredholm Determinants and Rank-One Perturbations}
In this section, we state some fundamental results in linear algebra, which are crucial in our analysis. The first lemma follows from Lemma 2.8 in \cite{krajenbrink2020} on taking their background kernel $K_s$ to be $0$.

\begin{lemma}
    \label{detrank1perturb}
    Let $f,g \in L^2[0,1]$. Then $\det(I+\opb{f}{g})=1+\ipb{f}{g}.$
\end{lemma}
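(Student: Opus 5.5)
The operator $\opb{f}{g}$ has rank one, so the whole computation reduces to a single eigenvalue. The plan is to find its spectrum and plug it into the definition of the Fredholm determinant. First I check that the determinant is defined. Since $\opb{f}{g}h=\ipb{g}{h}f$, the operator has rank at most one. Its only possible nonzero singular value is $\|f\|_2\|g\|_2$, because $(\opb{f}{g})^*=\opb{g}{f}$ and
\[
\opb{g}{f}\,\opb{f}{g}=\|f\|_2^2\,\opb{g}{g}.
\]
Hence $\opb{f}{g}$ is trace--class, and $\det(I+\opb{f}{g})$ is well defined as the product over its eigenvalues counted with algebraic multiplicity.

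Next I identify the eigenvalues. If $f=0$ or $g=0$, the operator is zero, the determinant is $1$, and $\ipb{f}{g}=0$, so the claim holds. Otherwise the range of $\opb{f}{g}$ is $\mathrm{span}\{f\}$. Any eigenvector $h$ with nonzero eigenvalue $\lambda$ satisfies $\lambda h=\ipb{g}{h}f$, so $h$ must be a multiple of $f$. Applying the operator to $f$ gives $\opb{f}{g}f=\ipb{g}{f}f$, so the only candidate nonzero eigenvalue is $\lambda=\ipb{f}{g}$.

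The one point needing care is algebraic multiplicity when $\ipb{f}{g}=0$. In that case the operator is nilpotent, since
\[
(\opb{f}{g})^2=\ipb{g}{f}\,\opb{f}{g}=0,
\]
so it has no nonzero eigenvalues and the determinant is $1=1+\ipb{f}{g}$. When $\ipb{f}{g}\neq0$, the generalized eigenspace for $\lambda$ lies inside the range $\mathrm{span}\{f\}$, which is one-dimensional. So $\lambda$ has algebraic multiplicity exactly one.

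Therefore
\[
\det(I+\opb{f}{g})=\prod_i(1+\lambda_i)=1+\ipb{f}{g}.
\]
As a cross-check, the same value comes from Lidskii's theorem together with $\tr(\opb{f}{g})=\ipb{f}{g}$ for a rank-one operator. It also agrees with \cite[Lemma~2.8]{krajenbrink2020} taken with background kernel $0$.
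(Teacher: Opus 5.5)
Your argument is correct, but it is not the route the paper takes. The paper gives no computation: it invokes Lemma~2.8 of \cite{krajenbrink2020}, a rank-one perturbation formula for Fredholm determinants with a general background kernel $K_s$, and specializes to $K_s=0$.

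You instead derive the identity directly from the definition of $\det$ that the paper adopts, namely the product of $1+\lambda_i$ over eigenvalues counted with algebraic multiplicity. You get trace-class-ness from the single singular value $\|f\|_2\|g\|_2$. You get the only candidate nonzero eigenvalue $\ipb{f}{g}$ from $\mathrm{Ran}\,\opb{f}{g}=\mathrm{span}\{f\}$. You then handle multiplicity via nilpotency when $\ipb{f}{g}=0$, and via a one-dimensional generalized eigenspace otherwise.

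The one step that deserves a line of justification is that last inclusion. If $(\opb{f}{g}-\lambda)^k h=0$ with $\lambda\neq0$, expanding the binomial shows $(-\lambda)^k h\in\mathrm{Ran}\,\opb{f}{g}$, so $h\in\mathrm{span}\{f\}$.

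Your version is self-contained and ties the lemma to the paper's own definition, which the citation leaves implicit. The cited lemma buys generality, namely a rank-one perturbation of an arbitrary background. The paper does not need that generality: in Lemma~\ref{lem:detfact} the background $-\beta T_W$ is split off separately using the multiplicativity of $\dett$, so only the zero-background case you proved is used.
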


The next lemma is crucial in the proof of Theorem \ref{thm:isingWeakConv}.

\begin{lemma}
    \label{lem:detfact}
    Let $f: [0,1] \to \R$ be Riemann integrable and $\beta < \frac{1}{\|W\|_{\mathrm{op}}}$. Then for small enough $t$,
    \begin{equation*}
        e^{t\int f^2}\sqrt{\frac{\dett(I-\beta T_W)}{\dett(I-\beta T_W-2t\opb{f}{f})}}=\frac{1}{\sqrt{(1-2t\ipb{f}{(I-\beta T_W)^{-1}f})}}.
    \end{equation*}
\end{lemma}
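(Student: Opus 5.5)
The plan is to factor the rank-one perturbation out of the 2-modified determinant and then evaluate what remains with Lemma \ref{detrank1perturb}. Write $A:=\beta T_W$ and $P:=2t\opb{f}{f}$. Both are Hilbert--Schmidt, and $P$ is rank one, hence trace class. Since $\beta\|W\|_{\mathrm{op}}<1$, the operator $I-A$ is invertible, so we may write
\[
I-A-P=(I-A)\bigl(I-(I-A)^{-1}P\bigr).
\]
For Hilbert--Schmidt $B_1,B_2$ the 2-modified determinant satisfies the standard product rule (see \cite{simon-2010}):
\[
\dett\bigl((I+B_1)(I+B_2)\bigr)=\dett(I+B_1)\,\dett(I+B_2)\,e^{-\tr(B_1B_2)}.
\]
We apply it with $B_1=-A$ and $B_2=-(I-A)^{-1}P$. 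Here $B_2$ is rank one, so $B_1B_2=A(I-A)^{-1}P$ is trace class and the trace is meaningful.

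Because $B_2$ is trace class, $\dett(I+B_2)=\det(I+B_2)e^{-\tr B_2}$. Moreover $(I-A)^{-1}\opb{f}{f}=\opb{(I-A)^{-1}f}{f}$, so Lemma \ref{detrank1perturb} gives
\[
\det(I+B_2)=1-2t\ipb{f}{(I-A)^{-1}f}.
\]
For the traces we use $\tr(\opb{g}{f})=\ipb{f}{g}$, which yields
\[
\tr B_2=-2t\ipb{f}{(I-A)^{-1}f}, \qquad \tr(B_1B_2)=2t\ipb{f}{A(I-A)^{-1}f}.
\]
Combining the two exponents gives
\[
-\tr B_2-\tr(B_1B_2)=2t\ipb{f}{(I-A)^{-1}f-A(I-A)^{-1}f}=2t\ipb{f}{f},
\]
since $(I-A)^{-1}-A(I-A)^{-1}=I$. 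Therefore
\[
\dett(I-A-P)=\dett(I-A)\bigl(1-2t\ipb{f}{(I-A)^{-1}f}\bigr)e^{2t\int f^2}.
\]
Taking the square root of $\dett(I-A)/\dett(I-A-P)$ and multiplying by $e^{t\int f^2}$, the exponential factors cancel exactly. What is left is $(1-2t\ipb{f}{(I-\beta T_W)^{-1}f})^{-1/2}$, which is the claim.

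For $t$ small enough, $1-2t\ipb{f}{R_\beta f}>0$, so the square root is real and positive. The condition $\beta\|T_W\|_{\mathrm{op}}+2|t|\|f\|_2^2<1$ already ensures this, because $\dett(I-A)>0$ when $\|A\|_{\mathrm{op}}<1$.

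The main obstacle is justifying the product rule for $\dett$ rather than merely quoting it. If we prefer a self-contained route, we would prove the identity in finite rank first, where $\dett(I+B)=\det(I+B)e^{-\tr B}$ and the rule follows from multiplicativity of $\det$. We would then take limits using continuity of $\dett$ in Hilbert--Schmidt norm, approximating $A$ by its spectral truncations $\sum_{i\le n}\beta\lambda_i\opb{\phi_i}{\phi_i}$. Along this approximation all inverses stay uniformly bounded because $\|A\|_{\mathrm{op}}<1$.
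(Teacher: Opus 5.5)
Your proposal is correct and follows the paper's proof essentially step by step. Your factorization $I-\beta T_W-2t\opb{f}{f}=(I-\beta T_W)\bigl(I+\opb{g}{f}\bigr)$ with $g=-2t(I-\beta T_W)^{-1}f$ is exactly the paper's, and you then use the same product rule $\dett((I+B)(I+C))=\dett(I+B)\dett(I+C)e^{-\tr(BC)}$, Lemma~\ref{detrank1perturb}, and the same trace cancellation producing $e^{2t\int f^2}$.

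The differences are cosmetic. You evaluate $\tr B_2$ and $\tr(B_1B_2)$ separately rather than combining them as $-\tr\bigl((I-\beta T_W)\opb{g}{f}\bigr)$. You also add optional remarks on the positivity of $1-2t\ipb{f}{(I-\beta T_W)^{-1}f}$ and a finite-rank proof of the product rule, which the paper simply cites from \cite{simon-2010}.
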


\begin{proof}
    Since \(\beta\|T_W\|_{\mathrm{op}}<1\), the operator \(A\coloneqq I-\beta T_W\) is invertible. Let
    \(g\coloneqq -2tA^{-1}f\). Then,
    \[
        A(I+\opb{g}{f})
        =
        A+A\opb{g}{f}
        =
        A+\opb{Ag}{f}
        =
        I-\beta T_W-2t\opb{f}{f}.
    \]
    Hence, using the following multiplicative identity for the 2-modified Fredholm determinant:
    \[\dett((I+B)(I+C))=\dett(I+B)\dett(I+C)e^{-\tr(BC)}\] for Hilbert--Schmidt operators
    \(B,C\) (see Chapter~9, Remark~2 in \cite{simon-2010}), with \(B=-\beta T_W\) and
    \(C=\opb{g}{f}\), we get
    \begin{equation}\label{eq:det2_factor_rankone}
        \dett(I-\beta T_W-2t\opb{f}{f})
        =
        \dett(I-\beta T_W)\dett(I+\opb{g}{f})
        e^{\tr(\beta T_W\opb{g}{f})}.
    \end{equation}

    Now, since \(\opb{g}{f}\) has rank one, it is trace--class, and therefore we have:
    \[\dett(I+\opb{g}{f})=\det(I+\opb{g}{f})e^{-\tr(\opb{g}{f})}.\] Also, by Lemma
    \ref{detrank1perturb}, we have:
    \[
        \det(I+\opb{g}{f})
        =
        1+\ipb{f}{g}
        =
        1-2t\ipb{f}{A^{-1}f}.
    \]
    Thus, \eqref{eq:det2_factor_rankone} gives:
    \[
        \dett(I-\beta T_W-2t\opb{f}{f})
        =
        \dett(I-\beta T_W)
        \left(1-2t\ipb{f}{A^{-1}f}\right)
        e^{-\tr(\opb{g}{f})+\tr(\beta T_W\opb{g}{f})}.
    \]
    The exponent can be simplified as:
    \[
        -\tr(\opb{g}{f})+\tr(\beta T_W\opb{g}{f})
        =
        -\tr(A\opb{g}{f})
        =
        -\tr(\opb{Ag}{f})
        =
        2t\tr(\opb{f}{f})
        =
        2t\int_0^1 f^2 .
    \]
    Therefore, we have:
    \[
        \dett(I-\beta T_W-2t\opb{f}{f})
        =
        \dett(I-\beta T_W)
        \left(1-2t\ipb{f}{(I-\beta T_W)^{-1}f}\right)
        e^{2t\int f^2}.
    \]
    Rearranging the above identity, we obtain:
    \[
        e^{t\int f^2}
        \sqrt{\frac{\dett(I-\beta T_W)}
        {\dett(I-\beta T_W-2t\opb{f}{f})}}
        =
        \frac{1}{\sqrt{1-2t\ipb{f}{(I-\beta T_W)^{-1}f}}}.
    \]
    This completes the proof of Lemma \ref{lem:detfact}.
\end{proof}

\section{Properties of the Gaussian process $X(t)$}
In this section, we state some properties of the Gaussian process $X(t)$ defined in \eqref{defXt}.

\begin{lemma}
    \label{constructXt}
    The Gaussian process $X(t)$ defined in \eqref{defXt} is well-defined with covariance kernel
    \begin{equation*}
        K(s,t)=\min\{s,t\} + \sum_{i=1}^\infty \frac{\beta \lambda_i}{1-\beta \lambda_i} \Phi_i(t)\Phi_i(s).
    \end{equation*}
\end{lemma}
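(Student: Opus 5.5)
We need to show that the series defining $X(t)$ converges (in $L^2$, for each $t$) and to compute its covariance. Write $c_i\coloneqq (1-\beta\lambda_i)^{-1/2}-1$. Assumption~\ref{assumption12} gives $\beta|\lambda_i|\le\beta\|W\|_{\mathrm{op}}<1$, so each $c_i$ is finite. The inequality
\[
|c_i|\le C_\beta|\lambda_i|, \qquad C_\beta \text{ depending only on } \beta\|W\|_{\mathrm{op}}<1,
\]
follows because $x\mapsto(1-x)^{-1/2}-1$ is smooth on $[-\beta\|W\|_{\mathrm{op}},\beta\|W\|_{\mathrm{op}}]$ and vanishes at $0$. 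Since $\sum_i\lambda_i^2=\|W\|_{L^2}^2<\infty$, we get $\sum_i c_i^2<\infty$.

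By the It\^o isometry, $Z_i=\int_0^1\phi_i\,dB$ are i.i.d.\ $\mathcal N(0,1)$, because $\{\phi_i\}$ is orthonormal. Moreover $|\Phi_i(t)|\le\|\phi_i\|_2\sqrt t\le1$ by Cauchy--Schwarz. Hence the partial sums $\sum_{i\le n}c_i\Phi_i(t)Z_i$ form an $L^2$-bounded martingale in $n$ (a sum of independent centered terms with summable variances). The series therefore converges in $L^2$ and almost surely for every $t$, and any finite collection of these limits is jointly Gaussian together with $B$, as an $L^2$ limit of Gaussian vectors. This shows that $X$ is a well-defined centered Gaussian process.

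For the covariance, expand $\mathbb E[X(s)X(t)]$ into four pieces, passing limits through the expectations by $L^2$ continuity.
\begin{itemize}
\item The $B$--$B$ term gives $\min\{s,t\}$.
\item For the cross terms, $\mathbb E[B(s)Z_i]=\int_0^1\mathbf 1_{[0,s]}\phi_i=\Phi_i(s)$ by the It\^o isometry. This contributes $\sum_i c_i\bigl(\Phi_i(t)\Phi_i(s)+\Phi_i(s)\Phi_i(t)\bigr)$.
\item Independence of the $Z_i$ gives $\sum_i c_i^2\Phi_i(s)\Phi_i(t)$.
\end{itemize}
Adding these, the coefficient of $\Phi_i(s)\Phi_i(t)$ is $2c_i+c_i^2=(1+c_i)^2-1=(1-\beta\lambda_i)^{-1}-1=\beta\lambda_i/(1-\beta\lambda_i)$, which is the claimed kernel. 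The series $\sum_i\frac{\beta\lambda_i}{1-\beta\lambda_i}\Phi_i(s)\Phi_i(t)$ converges absolutely. Indeed, the coefficients are $O(|\lambda_i|)$ and $\sum_i|\Phi_i(s)|^2\le\|\mathbf 1_{[0,s]}\|_2^2\le1$ by Bessel, so Cauchy--Schwarz with $\sum\lambda_i^2<\infty$ and Bessel bounds it.

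The one step needing care is that $\{\phi_i\}$ need not be complete (the kernel of $T_W$ may be nontrivial). However, only orthonormality enters: the $Z_i$ are still i.i.d.\ standard normal, and Bessel's inequality replaces Parseval. We also note for later use that, because $R_\beta=I+\sum_i\frac{\beta\lambda_i}{1-\beta\lambda_i}\langle\cdot,\phi_i\rangle\phi_i$, the kernel equals $\langle\mathbf 1_{[0,s]},R_\beta\mathbf 1_{[0,t]}\rangle$, consistent with its use in Proposition~\ref{lem:donsker_fdd}.
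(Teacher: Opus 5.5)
Your proposal is correct and follows essentially the same route as the paper: the bound $|c_i|\lesssim|\lambda_i|$ together with $\sum_i\lambda_i^2<\infty$ gives $L^2$ convergence of the series, and the covariance is assembled from $\mathbb E[B(s)Z_i]=\Phi_i(s)$ and the identity $2c_i+c_i^2=\beta\lambda_i/(1-\beta\lambda_i)$, with Bessel's inequality controlling the cross-term series. Your extras (almost sure convergence via the martingale argument, and identifying the kernel with $\langle\mathbf 1_{[0,s]},R_\beta\mathbf 1_{[0,t]}\rangle$) are also correct, but the lemma itself does not need them.
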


\begin{proof}
Write
\(
    a_i\coloneqq (1-\beta\lambda_i)^{-1/2} -1\), and recall the following notations:
  \[
    \Phi_i(t)\coloneqq \int_0^t \phi_i(x)\,dx,
    \qquad
    Z_i\coloneqq \int_0^1 \phi_i(t)\,dB(t).
\]
Then the random variables \(Z_i\) are independent standard Gaussians, since
\(\{\phi_i\}\) is orthonormal, and by It\^o isometry,
\[
    \mathbb E[Z_iZ_j]
    =
    \langle \phi_i,\phi_j\rangle
    =
    \delta_{ij}.
\]
Since \(T_W\) is Hilbert--Schmidt, \(\sum_i\lambda_i^2<\infty\). Moreover,
since \(\beta\|W\|_{\mathrm{op}}<1\), we have:
\[
    |a_i|\le C|\lambda_i|
\]
for some constant \(C = C(\beta,\|W\|_{\mathrm{op}}) <\infty\). 
Also, for each fixed \(t\in[0,1]\), note that:
\[
    |\Phi_i(t)|
    =
    |\langle \phi_i,\mathbf 1_{[0,t]}\rangle|
    \le
    \|\phi_i\|_2\|\mathbf 1_{[0,t]}\|_2
    \le 1,
\]
and hence,
\[
    \sum_i a_i^2\Phi_i(t)^2
    \le
    C^2\sum_i\lambda_i^2
    <
    \infty.
\]
Therefore, the sequence $R_n(t) := \sum_{i=1}^n a_i\Phi_i(t)Z_i$ is Cauchy in $L^2$, and hence, the limit
\(
    R(t)\coloneqq \sum_{i=1}^\infty a_i\Phi_i(t)Z_i
\)
is well-defined as an \(L^2\) object. Consequently,
\(
    X(t)=B(t)+R(t)
\)
is well-defined. Moreover, every finite vector
\((X(t_1),\ldots,X(t_m))\) is Gaussian, since the process $X(t)$ is an \(L^2\)-limit of
Gaussian processes.

It remains to compute the covariance of the process $X(t)$. 
Since \(R_n(t)\to R(t)\) in \(L^2\), we may compute covariances by passing to
the limit. First,
\[
    \mathbb E[B(s)Z_i]
    =
    \mathbb E\left[
    \left(\int_0^1\mathbf 1_{[0,s]}(x)\,dB(x)\right)
    \left(\int_0^1\phi_i(x)\,dB(x)\right)
    \right]
    =
    \Phi_i(s).
\]
Thus,
\[
    \mathbb E[B(s)R(t)]
    =
    \sum_{i=1}^\infty a_i\Phi_i(s)\Phi_i(t).
\]
The convergence of this series follows from Cauchy--Schwarz and Bessel's
inequality:
\[
    \sum_i |a_i\Phi_i(s)\Phi_i(t)|
    \le
    \left(\sum_i a_i^2\Phi_i(t)^2\right)^{1/2}
    \left(\sum_i \Phi_i(s)^2\right)^{1/2}
    \le
    C.
\]
Similarly,
\[
    \mathbb E[R(s)R(t)]
    =
    \sum_{i=1}^\infty a_i^2\Phi_i(s)\Phi_i(t),
\]
and this series is absolutely convergent because
\[
    \sum_i |a_i^2\Phi_i(s)\Phi_i(t)|
    \le
    \left(\sum_i a_i^2\Phi_i(s)^2\right)^{1/2}
    \left(\sum_i a_i^2\Phi_i(t)^2\right)^{1/2}
    <\infty .
\]
Therefore,
\begin{align*}
    \operatorname{Cov}(X(s),X(t))
    &=
    \mathbb E[B(s)B(t)]
    +
    \mathbb E[B(s)R(t)]
    +
    \mathbb E[B(t)R(s)]
    +
    \mathbb E[R(s)R(t)] \\
    &=
    \min\{s,t\}
    +
    \sum_{i=1}^\infty
    \left(2a_i+a_i^2\right)\Phi_i(s)\Phi_i(t).
\end{align*}
Finally, note that:
\[
    2a_i+a_i^2
    =
    \left(1+a_i\right)^2-1
    =
    \frac{1}{1-\beta\lambda_i}-1
    =
    \frac{\beta\lambda_i}{1-\beta\lambda_i}.
\]
Hence, we have:
\[
    \operatorname{Cov}(X(s),X(t))
    =
    \min\{s,t\}
    +
    \sum_{i=1}^\infty
    \frac{\beta\lambda_i}{1-\beta\lambda_i}
    \Phi_i(s)\Phi_i(t),
\]
as claimed.
\end{proof}

\begin{lemma}\label{holderX}
    The Gaussian process \(X\) admits a modification whose sample paths are
    \(\gamma\)-Hölder continuous for every \(0<\gamma<\frac12\).
\end{lemma}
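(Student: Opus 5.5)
The plan is to apply Kolmogorov's continuity theorem to the centered Gaussian process $X$. For Gaussian increments, all moments are controlled by the variance, so it suffices to show that
\[
\mathbb E\bigl[(X(t)-X(s))^2\bigr]\le C|t-s|,\qquad 0\le s\le t\le 1,
\]
for some constant $C<\infty$.

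The increment variance can be read off from Lemma \ref{constructXt}. The covariance kernel is $K(s,t)=\langle \mathbf 1_{[0,s]},R_\beta\mathbf 1_{[0,t]}\rangle$, where $R_\beta=(I-\beta T_W)^{-1}$. Expanding in the eigenbasis $\{\phi_i\}$ gives
\[
\min\{s,t\}+\sum_i\frac{\beta\lambda_i}{1-\beta\lambda_i}\Phi_i(s)\Phi_i(t)=\sum_i \frac{\langle \mathbf 1_{[0,s]},\phi_i\rangle\langle\mathbf 1_{[0,t]},\phi_i\rangle}{1-\beta\lambda_i},
\]
which uses Parseval on the orthonormal basis; if $\{\phi_i\}$ only spans the range closure of $T_W$, the kernel part contributes with eigenvalue $0$ and the formula is unaffected. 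By bilinearity,
\[
\mathbb E\bigl[(X(t)-X(s))^2\bigr]=\bigl\langle \mathbf 1_{(s,t]},R_\beta\mathbf 1_{(s,t]}\bigr\rangle\le \|R_\beta\|_{\mathrm{op}}\,\|\mathbf 1_{(s,t]}\|_2^2\le \frac{t-s}{1-\beta\|W\|_{\mathrm{op}}}.
\]
This uses $\|R_\beta\|_{\mathrm{op}}\le (1-\beta\|W\|_{\mathrm{op}})^{-1}$, which follows from the Neumann series under Assumption \ref{assumption12}(1).

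Next I pass to higher moments. Since $X(t)-X(s)$ is centered Gaussian, for every integer $p\ge1$,
\[
\mathbb E|X(t)-X(s)|^{2p}=(2p-1)!!\,\bigl(\mathbb E(X(t)-X(s))^2\bigr)^p\le C_p|t-s|^{p}.
\]
Kolmogorov's continuity criterion \cite[Theorem I.2.1]{revuzYor1999}, applied with exponent $2p$ and $1+\epsilon=p$, yields a modification whose paths are $\gamma$-Hölder for every $\gamma<(p-1)/(2p)$. Letting $p\to\infty$ covers every $\gamma<1/2$. To obtain a single modification that works for all $\gamma$ at once, I note that any two continuous modifications are indistinguishable, so the modifications obtained for different $p$ agree almost surely.

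The only step needing care is the identification of the increment variance with $\langle \mathbf 1_{(s,t]},R_\beta \mathbf 1_{(s,t]}\rangle$. I expect this to be the main (mild) obstacle. If one prefers to avoid Parseval, it can instead be done directly from \eqref{defXt}:
\[
X(t)-X(s)=B(t)-B(s)+\sum_i a_i\langle \mathbf 1_{(s,t]},\phi_i\rangle Z_i,\qquad a_i=(1-\beta\lambda_i)^{-1/2}-1.
\]
The variance of this expression is bounded by $2(t-s)+2\sup_i a_i^2\sum_i\langle\mathbf 1_{(s,t]},\phi_i\rangle^2\le C(t-s)$ by Bessel's inequality. This bound suffices and bypasses the spectral identity entirely.
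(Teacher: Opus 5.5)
Your proposal is correct and follows essentially the same route as the paper: bound the increment variance by $\|R_\beta\|_{\mathrm{op}}(t-s)\le (t-s)/(1-\beta\|W\|_{\mathrm{op}})$, upgrade to all moments via Gaussianity, apply Kolmogorov's criterion for each $p$, and glue the resulting modifications using indistinguishability of continuous modifications. Your explicit justification of $\mathbb E[(X(t)-X(s))^2]=\langle \mathbf 1_{(s,t]},R_\beta\mathbf 1_{(s,t]}\rangle$ (or the Bessel-inequality bypass) fills in a step that the paper simply attributes to Lemma \ref{constructXt}.
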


\begin{proof}
    To begin with, for \(t\in[0,1]\), define \(f_t\coloneq\one_{[0,t]}\). By Lemma
    \ref{constructXt}, we have:
    \[
        \E[X(s)X(t)]
        =
        \left\langle
        f_s,(I-\beta T_W)^{-1}f_t
        \right\rangle .
    \]
    Hence, for \(0\le s<t\le1\), we have:
    \begin{align*}
        \E\!\left[(X(t)-X(s))^2\right]
        &=
        \left\langle
        f_t-f_s,(I-\beta T_W)^{-1}(f_t-f_s)
        \right\rangle \\
        &\le
        \bigl\|(I-\beta T_W)^{-1}\bigr\|_{\mathrm{op}}
        \|f_t-f_s\|_{L^2}^2 \\
        &\le
        \frac{t-s}{1-\beta\|T_W\|_{\mathrm{op}}},
    \end{align*}
    where the last inequality follows from the fact that for $\|\beta T_W\|_{\mathrm{op}} <1$, we have:
    \[
        (I-\beta T_W)^{-1}
        =
        \sum_{m=0}^\infty \beta^m T_W^m .
    \]
    Since \(X(t)-X(s)\) is centered Gaussian, for every \(p\ge2\),
    \[
        \E|X(t)-X(s)|^p
        \le C_p|t-s|^{p/2},
    \]
    where \(C_p<\infty\) is independent of \(s\) and \(t\). It thus follows from the Kolmogorov--Chentsov theorem
    \cite[Theorem~2.9]{leGall2016}, that for every $p>2$, $X$ admits a modification \(X^{(p)}\)
    whose paths are \(\gamma\)-H\"older continuous for every
    \[
        0<\gamma<
        \frac{p/2-1}{p}
        =
        \frac12-\frac1p .
    \]

    Now, let \(\widetilde X\coloneq X^{(3)}\). For every integer \(p\ge3\),
    the processes \(\widetilde X\) and \(X^{(p)}\) are continuous
    modifications of \(X\), and hence are indistinguishable, i.e. they agree
    almost surely at every time $t \in [0,1]$. Intersecting the underlying countably many
    probability-one events shows that almost surely, \(\widetilde X\) is
    \(\gamma\)-Hölder continuous for every
    \(0<\gamma<\frac12-\frac{1}{p}\) and every $p \ge 3$, which in turn, means that almost surely, \(\widetilde X\) is
    \(\gamma\)-Hölder continuous for every
    \(0<\gamma<\frac12\). This completes the proof.
\end{proof}

\section{Supplementary Results}
In this section, we collect several supplementary results from the literature that play an important role in the proofs of our main results. The following result corresponds to the first part of Lemma F.1 (Eq. (F.1)) in \cite{mukherjee2026isinginference}. Throughout Lemmas~\ref{lem:expect2} and~\ref{lem:cov2}, all
$O(\cdot)$ terms are uniform over the displayed spin
configurations, with constants independent of $N$.

\begin{lemma}\label{lem:expect2}
    For all $\bs \in \{-1,+1\}^N$, as long as $\theta_N = \Omega(N^{-2/3})$, we have:
    \begin{multline*}
        \E T\left(\bs\right) = \exp\left(-\frac{\beta^2}{4N^2\theta_N}\sum_{i=1}^{N}W\left(\frac{i}{N},\frac{i}{N}\right)
        -\frac{\beta^2}{2N^2}\sum_{i> j}W^2\left(\frac{i}{N},\frac{j}{N}\right)-\frac{\beta^4}{12 N^4\theta_N^3}\sum_{i> j} W\left(\frac{i}{N},\frac{j}{N}\right)\right.\\
        \left.+o\left(\frac 1{N\theta_N}\right)+\frac{\beta}{N}\sum_{i> j}W\left(\frac{i}{N},\frac{j}{N}\right)\sigma_i \sigma_j + O\left(\frac{1}{N^3 \theta_N^2}\left|\sum_{i>j}W\left(\frac{i}{N},\frac{j}{N}\right)\sigma_i \sigma_j\right| \right)\right).
    \end{multline*}
\end{lemma}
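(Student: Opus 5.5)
The plan is to compute $\E T(\bs)$ exactly, using independence of the edges, and then Taylor expand the logarithm of each factor. First simplify the exponent of $T(\bs)$. Since $\sigma_i^2=1$, the diagonal part of $\gamma\sum_{i,j}A_N(i,j)\sigma_i\sigma_j$ equals $\gamma\xi_N$ and cancels. Also $\zeta_N=2\sum_{i<j}A_N(i,j)+\xi_N$. Hence
\[
\log T(\bs)=\sum_{i<j}A_N(i,j)\bigl(2\gamma\sigma_i\sigma_j-2\gamma^2\bigr)-\gamma^2\sum_{i}A_N(i,i).
\]
Write $p_{ij}=\theta_N W(\tfrac iN,\tfrac jN)$. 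Independence of the Bernoulli entries then gives the exact product formula
\[
\E T(\bs)=\prod_{i<j}\Bigl(1+p_{ij}\bigl(e^{x_{ij}}-1\bigr)\Bigr)\prod_i\Bigl(1+p_{ii}\bigl(e^{-\gamma^2}-1\bigr)\Bigr),
\qquad x_{ij}:=2\gamma s_{ij}-2\gamma^2,\ s_{ij}:=\sigma_i\sigma_j .
\]
Since $\gamma=\beta/(2N\theta_N)\to0$ (because $N\theta_N\gtrsim N^{1/3}$), every $x_{ij}$ is uniformly small. So I would take logarithms and expand $\log(1+p(e^x-1))$ to fourth order in $\gamma$. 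Throughout I would use $s_{ij}^2=1$ to reduce every monomial to either a constant or a multiple of $s_{ij}$.

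The expansion of the off-diagonal factors is organized as follows.
\begin{itemize}
\item The term of order $\gamma$ is $2\gamma p_{ij}s_{ij}=\frac{\beta}{N}W_{ij}s_{ij}$. Summing over $i>j$ gives the main interaction $\frac{\beta}{N}\sum_{i>j}W_{ij}\sigma_i\sigma_j$.
\item At order $\gamma^2$, the $-2\gamma^2$ in $x_{ij}$ cancels exactly against the $+2\gamma^2$ coming from $x_{ij}^2/2$. What survives is the term $-\tfrac12 p_{ij}^2(2\gamma)^2$ from $-y^2/2$. Summed, it equals $-\frac{\beta^2}{2N^2}\sum_{i>j}W_{ij}^2$.
\item The order-$\gamma^3$ terms are all proportional to $s_{ij}$, with coefficients of size $\gamma^3 p_{ij}\asymp N^{-3}\theta_N^{-2}W_{ij}$. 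They sum to the error term $O(Q_N(\bs))$.
\item The constant part of order $\gamma^4 p_{ij}$ comes from collecting the contributions of $x^2/2$, $x^3/6$ and $x^4/24$. This coefficient should come out to $-\frac{\beta^4}{12N^4\theta_N^3}W_{ij}$. I would verify the exact constant $\tfrac1{12}$ by a careful bookkeeping of the cumulant-type expansion.
\end{itemize}

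The diagonal factors contribute $\sum_i\log(1-p_{ii}\gamma^2+O(p_{ii}\gamma^4))=-\frac{\beta^2}{4N^2\theta_N}\sum_i W_{ii}+O(N^{-3}\theta_N^{-3})$. The error here is $o(1/(N\theta_N))$.

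It remains to control the leftover terms, uniformly in $\bs$.
\begin{itemize}
\item The $p^2$-corrections at order $\gamma^3$ and $\gamma^4$, such as $p_{ij}^2\gamma^3 s_{ij}$ and $p_{ij}^2\gamma^4$, summed over $N^2$ pairs, are $O(N^{-1}\theta_N^{-1}\cdot N^{-1})$ or smaller. The same holds for $p^3\gamma^3$. All of these are $o(1/(N\theta_N))$.
\item The genuinely delicate remainder is at order $\gamma^5 p_{ij}$. Summed over pairs it is of size $N^2\theta_N\gamma^5\asymp N^{-3}\theta_N^{-4}$. Relative to $1/(N\theta_N)$ this ratio is $N^{-2}\theta_N^{-3}$, which is bounded exactly when $\theta_N\gtrsim N^{-2/3}$. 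This is the step I expect to be the main obstacle. Boundedness alone is not $o(1)$, so one must check the parity structure. The odd-in-$s$ part of the $\gamma^5$ term is a multiple of $s_{ij}$ and can be absorbed into $O(Q_N(\bs))$-type terms. For the even part, I would expand one order further and argue that its coefficient contributes at most $O(N^{-3}\theta_N^{-4})$. That is $o(1/(N\theta_N))$ as soon as $N^{2/3}\theta_N\to\infty$. If not, I would absorb it together with the explicit $\beta^4$ term, tracking the precise constants as in \cite{mukherjee2026isinginference}.
\end{itemize}

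Exponentiating the resulting sum of logarithms then yields the displayed formula, with all $O(\cdot)$ terms uniform in $\bs$ because $|s_{ij}|=1$.
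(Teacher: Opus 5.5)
\textbf{Your overall route is sound, but the remainder bookkeeping has one real error and leaves one step unresolved.}

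Factorizing $\mathbb E T(\boldsymbol\sigma)$ over independent edges and Taylor-expanding each $\log(1+p_{ij}(e^{x_{ij}}-1))$ is the natural approach; the paper gives no proof and imports the statement from \cite{mukherjee2026isinginference}. Your main terms are correct. In particular, the constant of order $p\gamma^4$ is exactly $-\tfrac43 p\gamma^4$, which gives the $\tfrac1{12}$.

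\textbf{The error: the $p^3\gamma^3$ term.} Write $e^{x}-1=A+sB$, where $A=\cosh(2\gamma)e^{-2\gamma^2}-1=-\tfrac43\gamma^4+O(\gamma^6)$ and $B=\sinh(2\gamma)e^{-2\gamma^2}=2\gamma-\tfrac83\gamma^3+\tfrac85\gamma^5+O(\gamma^7)$. The $s$-odd part of $\log(1+pA+psB)$ is then $\tfrac s2\log\frac{1+pA+pB}{1+pA-pB}=s\bigl[2p\gamma-\tfrac83 p(1-p^2)\gamma^3+O(p\gamma^5)\bigr]$.

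You claim the $p^2\gamma^3$ and $p^3\gamma^3$ contributions sum to $O(N^{-2}\theta_N^{-1})$. That is wrong: in fact $\sum_{i>j}p_{ij}^2\gamma^3\asymp (N\theta_N)^{-1}$ and $\sum_{i>j}p_{ij}^3\gamma^3\asymp N^{-1}$. The first is harmless only because no $p^2\gamma^3 s$ term exists (the odd part of $(e^x-1)^2$ is $2AB=O(\gamma^5)$). The second is genuinely present, namely
\[
\frac{\beta^3}{3N^3}\sum_{i>j}W_{ij}^3\sigma_i\sigma_j ,
\]
and it is $o((N\theta_N)^{-1})$ only if $\theta_N\to0$.

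When $\theta_N\to\theta>0$, which Assumption~\ref{assumption12}(3) allows, this term cannot be absorbed into $O(Q_N(\boldsymbol\sigma))$ uniformly in $\boldsymbol\sigma$. Take a two-block $W$ with values $0<p_0<q_0$, and choose block magnetizations $m_1,m_2\asymp N$ with $p_0(m_1^2+m_2^2)+2q_0m_1m_2=O(N)$. Then $Q_N(\boldsymbol\sigma)=O(N^{-2})$. On the other hand, $\sum_{i>j}W_{ij}^3\sigma_i\sigma_j=\tfrac12p_0(p_0^2-q_0^2)(m_1^2+m_2^2)+O(N)\asymp N^2$, so the displayed term is of order $N^{-1}$.

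So your argument proves the lemma only for $\theta=0$. For $\theta>0$ the expansion must carry an extra error term, such as $O\bigl(N^{-3}|\sum_{i>j}W_{ij}^3\sigma_i\sigma_j|\bigr)$. This is harmless downstream, since, like $Q_N$, it is $O_{L^p}(N^{-2})$ under the uniform measure.

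\textbf{The unresolved step: the order-$\gamma^5$ even term.} You flag this as the main obstacle but do not settle it. A bound of $O(N^{-3}\theta_N^{-4})$ for an $s$-even $\gamma^5$ term would not be $o((N\theta_N)^{-1})$ at $\theta_N\asymp N^{-2/3}$. Nor can anything be absorbed into the explicit $\beta^4$ term, whose coefficient is fixed.

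The step dissolves by parity in $\gamma$. The exponent $x=2\gamma s-2\gamma^2$ is invariant under $(\gamma,s)\mapsto(-\gamma,-s)$. Hence the $s$-even part of $\log(1+p(e^x-1))$ is an even function of $\gamma$, and the $s$-odd part is an odd one.

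\begin{itemize}
\item The even part is $-2p^2\gamma^2-\tfrac43p\gamma^4+O(p^2\gamma^4+p\gamma^6)$, with no $\gamma^5$ term at all. Summed over pairs, the remainder is $O((N\theta_N)^{-2}+N^{-4}\theta_N^{-5})$. Since $\theta_N\gtrsim N^{-2/3}$, we have $N^{-4}\theta_N^{-5}\lesssim N^{-1/3}(N\theta_N)^{-1}$.
\item The odd part from order $\gamma^5$ on is $\tfrac85p\gamma^5 s+O(p^2\gamma^5+p\gamma^7)$. It contributes $O\bigl(Q_N(\boldsymbol\sigma)(N\theta_N)^{-2}\bigr)+o((N\theta_N)^{-1})$.
\end{itemize}

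With these two corrections, and the diagonal computation you already have, the expansion goes through uniformly in $\boldsymbol\sigma$ when $\theta_N\to0$. In the dense case it holds only with the additional error term above.
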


The next result corresponds to the second part of Lemma F.1 (Eq. (F.3)) in \cite{mukherjee2026isinginference}.
\begin{lemma}\label{lem:cov2}
    For all $\bs, \bt \in \{-1,+1\}^N$, as long as $\theta_N = \Omega(N^{-2/3})$, we have:
    \begin{eqnarray*}
        &&\E T\left(\bs\right) T\left(\bt\right)\\ &=&
        \exp\Bigg(-\frac{\beta^2}{2N^2\theta_N}\sum_{i=1}^{N}W\left(\frac{i}{N},\frac{i}{N}\right)
        -\frac{\beta^2}{N^2}\sum_{i> j}W^2\left(\frac{i}{N},\frac{j}{N}\right)-\frac{\beta^4}{6 N^4\theta_N^3}\sum_{i> j} W\left(\frac{i}{N},\frac{j}{N}\right) + o\left(\frac 1{N\theta_N}\right)\\
        &+& \frac{\beta}{N}\sum_{i> j}W\left(\frac{i}{N},\frac{j}{N}\right)(\sigma_i \sigma_j + \tau_i\tau_j)+\frac{\beta^2}{\theta_N N^2} \sum_{i>j}W\left(\frac{i}{N},\frac{j}{N}\right)\left(1-\theta_N W\left(\frac{i}{N},\frac{j}{N}\right)\right)\sigma_i\sigma_j\tau_i\tau_j\\
        &+& O\left(\frac{1}{N^3 \theta_N^2}\sum_{i>j}W\left(\frac{i}{N},\frac{j}{N}\right)\sigma_i \sigma_j \right) + O\left(\frac{1}{N^3 \theta_N^2}\sum_{i>j}W\left(\frac{i}{N},\frac{j}{N}\right)\tau_i \tau_j\right)\\&+&O\left(\frac{1}{\theta_N^3 N^4}\sum_{i>j}W\left(\frac{i}{N},\frac{j}{N}\right)\sigma_i\sigma_j\tau_i\tau_j\right).
    \end{eqnarray*}
\end{lemma}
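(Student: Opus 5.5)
The statement is Lemma~\ref{lem:cov2}, an expansion of $\E T(\bs)T(\bt)$ that the paper imports from \cite{mukherjee2026isinginference}. I would prove it directly by exploiting independence of the edges. Write $x_{ij}\coloneqq\sigma_i\sigma_j+\tau_i\tau_j\in\{-2,0,2\}$ for $i>j$, and $p_{ij}\coloneqq\theta_N W(i/N,j/N)$.

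\textbf{Step 1: factorize.} Since $A_N$ is symmetric, the off-diagonal part of $\gamma\sum_{i,j}A_N(i,j)(\sigma_i\sigma_j+\tau_i\tau_j)$ equals $2\gamma\sum_{i>j}A_N(i,j)x_{ij}$. The diagonal part equals $2\gamma\xi_N$, and it cancels against the $-2\gamma\xi_N$ coming from the two factors. Hence
\[
\E T(\bs)T(\bt)=e^{-2\gamma^2\E\zeta_N\,(\text{random part handled below})}\prod_{i>j}\bigl(1+p_{ij}(e^{2\gamma x_{ij}}-1)\bigr).
\]
The $\zeta_N$ term is random, since $\zeta_N=\sum_{i,j}A_N(i,j)$. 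So I would instead absorb $-2\gamma^2A_N(i,j)$ edgewise into each factor, with the diagonal edges contributing separately. Each off-diagonal factor then becomes
\[
1+p_{ij}\bigl(e^{2\gamma x_{ij}-4\gamma^2}-1\bigr),
\]
and each diagonal factor is $1+p_{ii}(e^{-2\gamma^2}-1)$. The diagonal product yields the term $-\frac{\beta^2}{2N^2\theta_N}\sum_i W_{ii}$ up to $o(1/(N\theta_N))$, because $\gamma^2 p_{ii}=\beta^2W_{ii}/(4N^2\theta_N)$ and there are two such factors.

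\textbf{Step 2: expand each off-diagonal factor.} Take logarithms and Taylor expand in $u\coloneqq 2\gamma=\beta/(N\theta_N)$, which tends to $0$. Write $\log(1+p(e^{y}-1))$ with $y=ux-u^2$ as a cumulant series in $y$:
\[
py+\tfrac{p(1-p)}2y^2+\tfrac{p(1-p)(1-2p)}6y^3+\ldots
\]
Since $x\in\{0,\pm2\}$, we have $x^2=2+2\sigma_i\sigma_j\tau_i\tau_j$ and $x^3=4x$, and I would use these identities to sort the terms.
\begin{itemize}
\item The linear term $pux=\frac{\beta}{N}W_{ij}x_{ij}$ gives the field interaction.
\item The term $-pu^2$ combines with the constant part $\tfrac{p(1-p)}2u^2\cdot2$ of the quadratic term to give $-p^2u^2=-\frac{\beta^2}{N^2}W_{ij}^2$.
\item The remaining quadratic part $p(1-p)u^2\sigma_i\sigma_j\tau_i\tau_j$ is exactly the displayed $\sigma\tau$-interaction term.
\item The cubic and quartic terms yield $\frac{p u^3}{N}$-type contributions proportional to $x$. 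These are dominated by $\frac{1}{N^3\theta_N^2}\sum W_{ij}(\sigma_i\sigma_j+\tau_i\tau_j)$, giving the two $O(\cdot)$ terms. The products with $\sigma\tau$ give the last $O(\cdot)$ term.
\item The constant quartic pieces give $-\frac{\beta^4}{6N^4\theta_N^3}\sum W_{ij}$, obtained by matching the coefficient $p u^4$ times the constant from $x^4=8+8\sigma\tau\sigma\tau$ and from the cross terms with $-u^2$.
\item Everything else is $O(N^2\cdot p u^5)=O(1/(N^3\theta_N^4))$. This is $o(1/(N\theta_N))$ precisely when $N^2\theta_N^3\to\infty$, which holds under $\theta_N=\Omega(N^{-2/3})$ up to the boundary case.
\end{itemize}

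\textbf{Main obstacle.} The bookkeeping of the fourth-order constants, together with confirming that the fifth-order remainder is $o(1/(N\theta_N))$ uniformly in $(\bs,\bt)$, is where I expect the difficulty. At the boundary $\theta_N\asymp N^{-2/3}$ the crude fifth-order bound is only $O(1/(N\theta_N))$. To get the needed $o(\cdot)$, I would try using that odd powers of $x$ pair with signed sums, which can be pushed into the $O(\cdot)$ terms with a spin-sum factor, while the even-order constants can be computed exactly.
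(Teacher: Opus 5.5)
Your route (edgewise factorization into $\prod_i\bigl(1+p_{ii}(e^{-2\gamma^2}-1)\bigr)\prod_{i>j}\bigl(1+p_{ij}(e^{ux_{ij}-u^2}-1)\bigr)$, then the Bernoulli cumulant expansion) is the natural proof of this lemma. The paper itself only cites it. Your main terms are correct, including the quartic constant $\tfrac p2-p+\tfrac p3=-\tfrac p6$ per edge.

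\textbf{The boundary obstacle.} It goes away once you use an exact parity structure.
\begin{itemize}
\item Because $x_{ij}\in\{-2,0,2\}$, each log-factor equals $c_0(u)+c_1(u)x_{ij}+c_2(u)\sigma_i\sigma_j\tau_i\tau_j$ exactly.
\item Its value at $x=-2$ is its value at $x=2$ with $u\mapsto-u$, and its value at $x=0$ is even in $u$. Hence $c_1$ is odd in $u$, while $c_0$ and $c_2$ are even.
\item So every $u^5$ term lies in $c_1$. Moreover $c_2=p(1-p)u^2+O(pu^4)$ and $c_0=-p^2u^2-\tfrac p6u^4+O(p^2u^4+pu^6)$, uniformly in $p\in[0,1]$. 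Uniformity holds because $\log(1+p(e^y-1))$ vanishes at $p=0$, so every Taylor coefficient is $O(p)$.
\item The constant error is therefore $O\bigl((N\theta_N)^{-2}+N^{-4}\theta_N^{-5}\bigr)=o\bigl((N\theta_N)^{-1}\bigr)$, since $N^3\theta_N^4\gtrsim N^{1/3}$.
\end{itemize}

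\textbf{The step that fails.} Your claim that the odd remainders are ``dominated by $\frac{1}{N^3\theta_N^2}\sum_{i>j}W_{ij}(\sigma_i\sigma_j+\tau_i\tau_j)$'' does not hold.
\begin{itemize}
\item The coefficients depend on $p_{ij}$: one finds $c_1=pu-\bigl(\tfrac p3+p^2-\tfrac43p^3\bigr)u^3+O(pu^5)$.
\item The $-p_{ij}^2u^3$ piece contributes $-\frac{\beta^3}{N^3\theta_N}\sum_{i>j}W_{ij}^2x_{ij}$. This is of order $(N\theta_N)^{-1}$, and the aggregate sums do not control it.
\item Concretely, take $\theta_N\to0$ and a two-block graphon with within-block value $a$ and between-block value $b$, $0<a<b$. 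You can choose block magnetizations of $\bs$, $\bt$ and $\bs\bt$ so that $\sum_{i>j}W_{ij}\sigma_i\sigma_j$, $\sum_{i>j}W_{ij}\tau_i\tau_j$ and $\sum_{i>j}W_{ij}\sigma_i\sigma_j\tau_i\tau_j$ are all $O(N)$, while $\sum_{i>j}W_{ij}^2\sigma_i\sigma_j\asymp N^2$.
\item In that configuration every displayed $O$-term is $o\bigl((N\theta_N)^{-1}\bigr)$, but this piece is not.
\end{itemize}
So the lemma cannot be proved with the $O$-terms read as bounds by the displayed aggregate sums; for such $W$ that reading is false.

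\textbf{What your expansion does prove.} It proves the termwise version. The remainders are $\sum_{i>j}\epsilon_{ij}W_{ij}x_{ij}$ and $\sum_{i>j}\epsilon'_{ij}W_{ij}\sigma_i\sigma_j\tau_i\tau_j$, with $\sup|\epsilon_{ij}|\lesssim N^{-3}\theta_N^{-2}$ and $\sup|\epsilon'_{ij}|\lesssim N^{-4}\theta_N^{-3}$. You should state the lemma in that form. It suffices for the paper's later use, because the $L^p$ and exponential-moment bounds used for $Q_N$ under uniform spins hold equally for such weighted quadratic forms.

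Two minor slips: the per-edge size of the cubic terms is $pu^3$, not $pu^3/N$; and the $u^4$ terms are even in $x$, not proportional to it.
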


The proof of the next lemma follows the argument of Lemma A.6 in \cite{mukherjee2026isinginference}, and is omitted.

\begin{lemma}
    \label{lem:taylor_expRn}
    Let $\nu$ denote the uniform probability
measure on \(\{-1,+1\}^{2N}\). Define
    $$\Lambda_N \coloneqq R_N(\bs,\bt)+C_0\left(Q_N(\bs)+Q_N(\bt)\right)+ \frac{C_1}{N \theta_N} Q_N(\bs \bt)$$
    for some positive constants $C_0$ and $C_1$, where $R_N$ and $Q_N$ are as defined in \eqref{RNdef:def8} and \eqref{QNdef:def8}, respectively.
    Then, for every $1\le p<\infty$, we have:
    \begin{equation*}
        \exp(\Lambda_N) = 1 + \Lambda_N + o_{L^p(\nu)}\left(\frac{1}{N\theta_N}\right)
    \end{equation*}
\end{lemma}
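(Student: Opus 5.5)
The plan is to mimic the proof of Lemma A.6 in \cite{mukherjee2026isinginference}. The starting point is the elementary remainder bound
\[
\bigl|e^x-1-x\bigr|\le \tfrac12 x^2e^{|x|},
\]
which gives $|\exp(\Lambda_N)-1-\Lambda_N|\le \tfrac12\Lambda_N^2 e^{|\Lambda_N|}$. By H\"older's inequality under $\nu$,
\[
\bigl\|\Lambda_N^2e^{|\Lambda_N|}\bigr\|_{L^p(\nu)}\le \|\Lambda_N\|_{L^{4p}(\nu)}^2\,\bigl\|e^{|\Lambda_N|}\bigr\|_{L^{2p}(\nu)}.
\]
So the claim reduces to two estimates. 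The first is $\|\Lambda_N\|_{L^{q}(\nu)}=O(1/(N\theta_N))$ for every finite $q$; this makes the squared factor $O((N\theta_N)^{-2})=o(1/(N\theta_N))$, using $N\theta_N\to\infty$, which holds since $N\theta_N\gtrsim N^{1/3}$. The second is $\limsup_N\E_\nu e^{L|\Lambda_N|}<\infty$ for every fixed $L$.

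For the moment bound, I would split $\Lambda_N$ into its pieces. Write $R_N=\tilde R_N-R_N'$ as in the proof of Lemma \ref{var_sum}. We have $|\tilde R_N|=\beta^2N\theta_N Q_N(\bs\bt)$, and under $\nu$ the vector $\bs\bt$ is itself uniform on $\{-1,1\}^N$. Hence Lemma \ref{lem:Qnlp} gives $\tilde R_N=O_{L^q}(1/(N\theta_N))$. The same lemma gives $Q_N(\bs),Q_N(\bt)=O_{L^q}(1/(N\theta_N))$ and $Q_N(\bs\bt)/(N\theta_N)=O_{L^q}((N\theta_N)^{-2})$. For $R_N'$, the argument of Lemma A.8 in \cite{mukherjee2026isinginference} gives bounded moments of $N\theta_N|R_N'|$. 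A hands-on alternative is to note that $N\cdot\frac1N\sum_{i>j}W_{ij}^2\sigma_i\sigma_j\tau_i\tau_j$ is a Rademacher chaos of order two with bounded $L^2$ norm, so hypercontractivity bounds all its moments; multiplied by $\theta_N\le1$, this gives $R_N'=O_{L^q}(1/(N\theta_N))$.

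For the exponential bound, I would use $e^{|x|}\le e^x+e^{-x}$ and H\"older to split $e^{L|\Lambda_N|}$ into products of exponentials of the individual pieces. Each $Q_N$ piece is handled by the bound \cite[Eq.~(A.51)]{mukherjee2026isinginference} quoted in the proof of Lemma \ref{circlem}; applied to $\bs\bt$, which is uniform, it also covers $\tilde R_N$ and $Q_N(\bs\bt)$.

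The main obstacle is $\tilde R_N$ itself. It is the largest piece, of the form $\pm\frac{L\beta^2}{\theta_N N^2}\sum_{i>j}W_{ij}\rho_i\rho_j$ with $\rho=\bs\bt$ uniform, and its effective coupling is $L\beta^2/(N\theta_N)\to0$, as opposed to the order-one coupling $\beta/N$ in Lemma \ref{generalexpUniInt}. I would therefore invoke Lemma \ref{generalexpUniInt} with the kernel $K_N=\pm 2L\beta^2 W/(N\theta_N)$, whose $\|T_{|K_N|}\|_{\mathrm{op}}$ tends to $0$. Its Gaussian-domination step then bounds the exponential moment by
\[
\exp\bigl(C\,\mathrm{tr}(\bK_N^2)/N^2\bigr)=\exp\bigl(O((N\theta_N)^{-2})\bigr)\to1.
\]
The $R_N'$ term is treated the same way with coupling $\beta^2/N$ times a bounded kernel. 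Combining the two estimates then yields $\exp(\Lambda_N)=1+\Lambda_N+O_{L^p}((N\theta_N)^{-2})$, which is the claim.
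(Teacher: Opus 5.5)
Your proposal is correct and follows the route the paper intends. The paper omits the proof and defers to Lemma A.6 of \cite{mukherjee2026isinginference}. Your argument has the same structure: the Taylor-remainder and H\"older reduction to $\|\Lambda_N\|_{L^q(\nu)}=O(1/(N\theta_N))$ plus bounded exponential moments, using the same split $R_N=\tilde R_N-R_N'$, Lemma~\ref{lem:Qnlp}, and Lemma~\ref{generalexpUniInt} as the proof of Lemma~\ref{var_sum}.

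There are two small slips, neither of which affects correctness. First, \cite[Eq.~(A.51)]{mukherjee2026isinginference} with fixed $L$ does not itself cover $\tilde R_N$, since $|\tilde R_N|=\beta^2N\theta_N Q_N(\bs\bt)$ has a growing prefactor; your vanishing-coupling use of Lemma~\ref{generalexpUniInt} (via its proof, as the kernel depends on $N$) is what actually handles it. Second, the order-two chaos with bounded $L^2$ norm should be $NR_N'=\frac{\beta^2}{N}\sum_{i>j}W_{ij}^2\sigma_i\sigma_j\tau_i\tau_j$, not $\sum_{i>j}W_{ij}^2\sigma_i\sigma_j\tau_i\tau_j$.
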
 

The following result corresponds to Lemma A.7 in \cite{mukherjee2026isinginference}.

\begin{lemma}\label{lem:Qnlp}
    For every $1\leq p<\infty$, we have:
    \begin{equation*}
        \left(\E_{\bs \sim \mathrm{Unif}(\{-1,+1\}^N)} |Q_N(\bs)|^p \right)^{\frac{1}{p}} = O\left(\frac{1}{(N\theta_N)^2}\right).
    \end{equation*}
\end{lemma}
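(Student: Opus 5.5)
The plan is a direct moment computation for the quadratic Rademacher chaos
\[
S_N(\bs)\coloneqq\sum_{i>j}W\left(\frac iN,\frac jN\right)\sigma_i\sigma_j,
\]
followed by hypercontractivity to pass from the second moment to all $L^p$ norms. Since $Q_N(\bs)=|S_N(\bs)|/(N^3\theta_N^2)$, it suffices to show that $\bigl(\E_{\bs\sim\mathrm{Unif}}|S_N(\bs)|^p\bigr)^{1/p}\le C_p N$ for every fixed $p$. Indeed, this gives
\[
\|Q_N\|_{L^p}\le \frac{C_pN}{N^3\theta_N^2}=\frac{C_p}{(N\theta_N)^2},
\]
which is exactly the claimed bound.

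\emph{Step 1 (second moment).} Under the uniform measure the products $\sigma_i\sigma_j$, $i>j$, are pairwise orthogonal: $\E[\sigma_i\sigma_j\sigma_k\sigma_\ell]=0$ unless $\{i,j\}=\{k,\ell\}$. Hence
\[
\E S_N^2=\sum_{i>j}W\left(\frac iN,\frac jN\right)^2\le \frac{N^2}{2},
\]
because $0\le W\le 1$. Thus $\|S_N\|_{L^2}\le N/\sqrt2$.

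\emph{Step 2 (all moments).} $S_N$ is a homogeneous polynomial of degree $2$ in independent Rademacher variables. The Bonami--Beckner hypercontractive inequality therefore gives, for $p\ge 2$,
\[
\|S_N\|_{L^p}\le (p-1)^{2/2}\|S_N\|_{L^2}=(p-1)\|S_N\|_{L^2}.
\]
For $1\le p<2$ the bound follows from monotonicity of $L^p$ norms on a probability space. Combining with Step 1 yields $\|S_N\|_{L^p}\le (p-1\vee 1)N/\sqrt2$.

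If one prefers to avoid hypercontractivity, an alternative for even integer $p=2m$ is to expand $\E S_N^{2m}$ as a sum over $2m$-tuples of edges in which every vertex has even multiplicity. The number of such tuples is at most $C_m N^{2m}$, since each edge must be matched with another edge in the tuple and this leaves at most $2m$ free vertex labels. Each term is bounded by $1$, giving the same $O(N^{p})$ bound on the $p$-th moment.

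There is no real obstacle here. The only point needing care is that the constant depends on $p$ but not on $N$, and that the bound uses only $0\le W\le 1$; in particular it requires no assumption on $\theta_N$ beyond its appearance in the normalization. Dividing by $N^3\theta_N^2$ completes the argument.
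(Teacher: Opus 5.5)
Your proof is correct. The paper gives no argument for this lemma of its own; it imports it as Lemma A.7 of \cite{mukherjee2026isinginference}. So your hypercontractive proof is a self-contained replacement rather than a variant of an in-text argument.

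All three steps are sound, with constants independent of $N$ and $\theta_N$:
the reduction to $\|S_N\|_{L^p}\le C_pN$ for $S_N(\bs)=\sum_{i>j}W(\frac{i}{N},\frac{j}{N})\sigma_i\sigma_j$;
the orthogonality identity $\mathbb E S_N^2=\sum_{i>j}W(\frac{i}{N},\frac{j}{N})^2\le N^2/2$;
and the degree-two Bonami--Beckner bound $\|S_N\|_{L^p}\le (p-1)\|S_N\|_{L^2}$ for $p\ge 2$.

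Compared with the bare citation, your route buys three things.
\begin{itemize}
\item It uses only $|W|\le 1$, so it holds for any bounded kernel.
\item It is sharp in $L^2$. By Riemann integrability, $\mathbb E S_N^2\sim\frac{N^2}{2}\int_{[0,1]^2}W^2$, and $\int W^2\ge(\int W)^2>0$ under the standing assumptions.
\item It also gives an exponential moment. The growth $\|S_N/N\|_{L^p}\le (p-1)/\sqrt2$ yields $\sup_N\mathbb E\exp(\lambda|S_N|/N)<\infty$ for $\lambda<\sqrt2/e$. Since $Q_N=(N\theta_N)^{-2}|S_N|/N$ and $N\theta_N\gtrsim N^{1/3}\to\infty$, this delivers $\limsup_N\mathbb E\exp\{LQ_N(\bs)\}<\infty$ for every fixed $L$. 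The paper imports that bound from Eq.~(A.51) of the same reference and uses it in the proofs of Lemmas~\ref{circlem} and~\ref{var_sum}.
\end{itemize}

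One correction to your optional counting argument. It is false that every edge in a contributing $2m$-tuple must be paired with another copy of itself. For $m=2$, the $4$-cycle $\{1,2\},\{2,3\},\{3,4\},\{4,1\}$ gives every vertex even multiplicity with no repeated edge, and such cycles contribute at order $N^4$ to $\mathbb E S_N^4$. The count $C_mN^{2m}$ is still right, but for a different reason. The $4m$ endpoint slots are filled by vertices that each appear an even number of times, hence at least twice. So at most $2m$ distinct labels occur, and each term is bounded by $1$.
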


The next lemma is the analogue of Proposition F.1 in \cite{mukherjee2026isinginference} for possibly signed kernels.
\begin{lemma}
\label{quadJointAsym}
Let
\(
    K,K':[0,1]^2\rightarrow\mathbb R
\)
be symmetric, Riemann-integrable kernels, and suppose that
$\{\sigma_i\}_{i\geq1}$ and $\{\tau_i\}_{i\geq1}$ are two independent
sequences of i.i.d. Rademacher random variables. Let
$\{\lambda_r(K)\}_{r\geq1}$ and
$\{\lambda_r(K')\}_{r\geq1}$ denote the nonzero eigenvalues of
$T_K$ and $T_{K'}$, respectively, repeated according to
multiplicity. Then, we have:
\[
\begin{pmatrix}
\displaystyle
\frac{1}{2N}
\sum_{1\leq i\neq j\leq N}
K\left(\frac{i}{N},\frac{j}{N}\right)
\bigl(\sigma_i\sigma_j+\tau_i\tau_j\bigr)
\\[4mm]
\displaystyle
\frac{1}{2N}
\sum_{1\leq i\neq j\leq N}
K'\left(\frac{i}{N},\frac{j}{N}\right)
\sigma_i\tau_i\sigma_j\tau_j
\end{pmatrix}
\xrightarrow{d}
\begin{pmatrix}
\displaystyle
\frac12\sum_{r=1}^{\infty}
\lambda_r(K)(X_r^2-1)
+
\frac12\sum_{r=1}^{\infty}
\lambda_r(K)(Y_r^2-1)
\\[4mm]
\displaystyle
\frac12\sum_{r=1}^{\infty}
\lambda_r(K')(Z_r^2-1)
\end{pmatrix},
\]
where
$\{X_r\}_{r\geq1}$,
$\{Y_r\}_{r\geq1}$, and
$\{Z_r\}_{r\geq1}$ are mutually independent sequences of
i.i.d. standard Gaussian random variables.
All three infinite series appearing in the distributional limit converge in $L^2$ and almost surely.
\end{lemma}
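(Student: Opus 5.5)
This is a sketch of how I would prove Lemma~\ref{quadJointAsym}. The approach is to reduce to the finite-rank case by a spectral truncation, and then handle the finite-rank case with the ordinary multivariate CLT.

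\textbf{Step 1 (finite-rank kernels).} Suppose first that $K=\sum_{r\le m}\mu_r u_r\otimes u_r$ and $K'=\sum_{r\le m'}\mu'_r v_r\otimes v_r$, where the $u_r$ and $v_r$ are Riemann integrable and orthonormal. Write $\bs\bt=(\sigma_i\tau_i)_i$. Then
\[
\frac{1}{2N}\sum_{i\neq j}K\left(\tfrac iN,\tfrac jN\right)\sigma_i\sigma_j=\frac12\sum_{r\le m}\mu_r\Bigl(\sigma_N(u_r)^2-\tfrac1N\sum_i u_r(\tfrac iN)^2\Bigr).
\]
The analogous identities hold for $\bt$, and for $\bs\bt$ with $K'$. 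The sequence $\bs\bt$ is again i.i.d.\ Rademacher. Moreover, for $i\neq j$ it satisfies $\operatorname{Cov}(\sigma_i,\sigma_j\tau_j)=0$, and the whole triple is uncorrelated across the three families. We have $\sigma_N(u)=N^{-1/2}\sum_i u(i/N)\sigma_i$, and the analogous sums for $\bt$ and $\bs\bt$. The vector
\[
\bigl(\sigma_N(u_r),\,\tau_N(u_r),\,(\sigma\tau)_N(v_{r'})\bigr)_{r\le m,\ r'\le m'}
\]
is a sum of independent bounded vectors indexed by $i$. Its covariance converges to block-diagonal identity matrices by Riemann integrability: for example, $\frac1N\sum_i u_r u_s(i/N)\to\delta_{rs}$. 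The cross covariances are exactly zero, because $\E[\sigma_i\cdot\sigma_i\tau_i]=\E\tau_i=0$. The Lindeberg--Feller CLT therefore gives joint convergence to independent standard Gaussians $X_r,Y_r,Z_{r'}$. The continuous mapping theorem, together with $\frac1N\sum_i u_r(i/N)^2\to1$, yields the claimed limit.

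\textbf{Step 2 (approximation).} For general $K$, I take the truncation $K_m:=\sum_{r\le m}\lambda_r(K)\phi_r\otimes\phi_r$. There is a difficulty here: the eigenfunctions need not be Riemann integrable. I would therefore instead use step-function approximations, choosing $K_m$ to be a finite-rank kernel built from Riemann-integrable (e.g.\ step) functions with $\|K-K_m\|_{L^2}\to0$. Its eigenvalues converge to those of $K$ in $\ell^2$ by the Hoffman--Wielandt inequality for Hilbert--Schmidt operators. The error term is bounded in $L^2$ by a direct Rademacher computation:
\[
\E\Bigl(\frac1{2N}\sum_{i\ne j}(K-K_m)(\tfrac iN,\tfrac jN)\sigma_i\sigma_j\Bigr)^2=\frac{1}{2N^2}\sum_{i\neq j}(K-K_m)^2(\tfrac iN,\tfrac jN)\to\tfrac12\|K-K_m\|_{L^2}^2.
\]
This limit uses Riemann integrability of $(K-K_m)^2$. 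The same bound applies for $\bt$ and for $\bs\bt$, since both are i.i.d.\ Rademacher. On the limit side, the $L^2$ difference between $\frac12\sum_r\lambda_r(K)(X_r^2-1)$ and its counterpart with the eigenvalues of $K_m$ is controlled by the $\ell^2$ distance of the eigenvalue sequences, after coupling. A standard approximation theorem (Billingsley, Theorem~3.2) then gives the joint convergence.

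The almost sure and $L^2$ convergence of the limiting series follows from Kolmogorov's two-series theorem, since $\sum_r\lambda_r^2<\infty$ and the summands $\lambda_r(X_r^2-1)$ are independent and centered with variance $2\lambda_r^2$.

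\textbf{Main obstacle.} The main obstacle is Step 2. It requires care in constructing Riemann-integrable finite-rank approximants, and in matching the limiting laws through eigenvalue convergence. The subtle point is that the limit law depends only on the spectrum, so $\ell^2$-convergence of the spectra, with multiplicities and signs sorted separately for positive and negative eigenvalues, is what must be verified.
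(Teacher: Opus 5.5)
Your argument is correct, but it takes a different route from the paper.

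\textbf{The paper's route.} The paper uses the method of cumulants. For fixed $a,b$ it shows that the cumulants of $a$ times the first coordinate plus $b$ times the second converge to $\frac{(r-1)!}{2}\{2a^r c_r(K)+b^r c_r(K')\}$ for $r\ge 2$, where $c_r$ are the cycle integrals. Mixed terms vanish by Rademacher parity. It then identifies $c_r(K)=\sum_j\lambda_j(K)^r$ via traces of $T_K^r$, and concludes by moment-determinacy of the Gaussian-chaos limit and the Cram\'er--Wold device.

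\textbf{Your route.} You reduce to finite rank. For step kernels the statistic is literally $\frac12\sum_r\mu_r\bigl(\sigma_N(u_r)^2-\frac1N\sum_i u_r(i/N)^2\bigr)$. The finite-rank case then follows from a multivariate Lindeberg CLT plus continuous mapping, using only that $\sigma_i,\tau_i,\sigma_i\tau_i$ are pairwise uncorrelated. The passage to general $K$ follows from the exact second-moment identity $\frac{1}{2N^2}\sum_{i\ne j}D(i/N,j/N)^2\to\frac12\|D\|_{L^2}^2$ together with Billingsley's Theorem~3.2.

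\textbf{What your route gains.} It is more elementary and self-contained: there is no combinatorics of cycle versus non-cycle index patterns. It also shows transparently why the limit is a quadratic form in independent Gaussians and why the three families decouple. Your choice of step kernels is the right fix for the non-Riemann-integrability of the eigenfunctions of $K$. The eigenfunctions of a symmetric step kernel are themselves step functions, and the spectral expansion holds pointwise on the grid.

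\textbf{What it costs.} It needs an extra input on the limit side: $\ell^2$-continuity of spectra. This is Hoffman--Wielandt for self-adjoint Hilbert--Schmidt operators, with positive and negative eigenvalues matched separately and padded by zeros; it is true but must be quoted. You could bypass it by realizing the limit as a double Wiener integral $\frac12\iint_{x\neq y}K(x,y)\,dB(x)\,dB(y)$, which equals $\frac12\sum_r\lambda_r(K)(Z_r^2-1)$ with $Z_r=\int\phi_r\,dB$. Its $L^2$ distance to the $K_m$ version is exactly $\|K-K_m\|_{L^2}/\sqrt2$, mirroring your discrete computation. Use two independent Brownian motions for the first coordinate and a third for the second.

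\textbf{What the cumulant route gains.} It delivers convergence of all moments as a by-product. Your argument does not give this directly, though it is not needed for the statement.
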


\begin{proof}
The proof of Lemma \ref{quadJointAsym} closely follows the cumulant argument of
\cite[Proposition~F.1]{mukherjee2026isinginference}, so we sketch the deviations.
Although that proposition is stated for graphons, its proof uses only
that the kernels are bounded, symmetric, and Riemann integrable. Indeed, for fixed $a,b\in\mathbb R$, the weighted-$U$-statistic
argument gives limiting cumulants
\[
    \frac{(r-1)!}{2}
    \left\{
        2a^r c_r(K)+b^r c_r(K')
    \right\},
    \qquad r\geq2,
\]
where
\[
    c_2(K)=\int_{[0,1]^2}K(x,y)^2\,\mathrm dx\,\mathrm dy
\]
and, for $r\geq3$,
\[
    c_r(K)
    =
    \int_{[0,1]^r}
    \prod_{s=1}^r K(x_s,x_{s+1})
    \,\mathrm dx_1\cdots\mathrm dx_r,
    \qquad x_{r+1}=x_1.
\]
The same Rademacher parity argument as in the cited proof shows that
all mixed terms vanish.

Since $T_K$ and $T_{K'}$ are self-adjoint Hilbert--Schmidt
operators, we have
\[
    c_r(K)=\sum_{j\geq1}\lambda_j(K)^r,
    \qquad
    c_r(K')=\sum_{j\geq1}\lambda_j(K')^r,
    \qquad r\geq2.
\]
These are precisely the cumulants of the corresponding linear
combination of the Gaussian-chaos variables appearing on the distributional limits in the
right-hand side. Their moment-generating functions are finite in a
neighborhood of zero. The conclusion therefore follows from the
Cram\'er--Wold device.
\end{proof}

In this paper, we actually use the following one-copy consequence of
Lemma~\ref{quadJointAsym}:
\begin{equation}\label{onecopy2002}
    \frac{1}{2N}
    \sum_{1\leq i\neq j\leq N}
    K\left(\frac{i}{N},\frac{j}{N}\right)
    \sigma_i\sigma_j
    \xrightarrow{d}
    \frac12
    \sum_{r=1}^{\infty}
    \lambda_r(K)(Z_r^2-1)
\end{equation}
where $\{Z_r\}_{r\ge 1}$ is a sequence of i.i.d. standard Gaussian random variables. Indeed, since $\{\sigma_i\tau_i\}_{i\geq1}$ is again an i.i.d. mean-zero
Rademacher sequence, the second-coordinate convergence in
Lemma~\ref{quadJointAsym} gives \eqref{onecopy2002}.

Finally, the following corresponds to Lemma F.6 in \cite{mukherjee2026isinginference}.

\begin{lemma}\label{l2ratio762}
   Suppose that $\{X_N\}_{N\ge 1}, \{Y_N\}_{N\ge 1}$ are sequences of random variables all defined on a common probability space $\Omega$. Suppose that there exists $\delta>0$ such that for every $t\in (-\delta,\delta)$,  $$\E \left(e^{tX_N}\Big |Y_N\right) \xrightarrow{P} \E e^{tV}$$ for some random variable $V$. Then,
   \begin{enumerate}
       \item $X_N \xrightarrow{d} V$,
       \item For every $p\ge 1$, $\E (X_N^p|Y_N) \xrightarrow{P} \E V^p$. 
   \end{enumerate}
\end{lemma}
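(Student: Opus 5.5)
The plan is to prove both conclusions at once by showing that the conditional laws $\mu_N:=\mathcal L(X_N\mid Y_N)$ (regular conditional distributions, which exist since $X_N$ is real-valued) converge weakly in probability to $\mathcal L(V)$, with their moments converging as well. I will use the subsequence criterion, exactly as in the proof of Theorem~\ref{thm:isingWeakConv}. Write $M(t):=\E e^{tV}$, which is finite on $(-\delta,\delta)$ by hypothesis. Fix a countable dense set $D\subset(-\delta,\delta)$ that is symmetric about $0$.

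Start from an arbitrary subsequence. A diagonal extraction over $t\in D$ gives a further subsequence $N_m$ and a probability-one event $\Omega_1$ on which
\[
\int e^{tx}\,d\mu_{N_m}(x)\to M(t)\qquad\text{for every } t\in D.
\]
Fix $\omega\in\Omega_1$; everything below is deterministic. Pick $t_0\in D$ with $t_0>0$.

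\textbf{Tightness and uniform integrability.} Since $\sup_m\int (e^{t_0x}+e^{-t_0x})\,d\mu_{N_m}<\infty$, Markov's inequality gives tightness of $\{\mu_{N_m}\}$. The same bound gives uniform integrability under $\mu_{N_m}$ of every function dominated by a multiple of $e^{t_0|x|}$ raised to a power strictly below $1$. This covers $|x|^p$ for every $p$, and $e^{tx}$ for every $|t|<t_0$.

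\textbf{Identifying the limit.} Let $\nu$ be any subsequential weak limit of $\mu_{N_m}$. For $t\in D$ with $|t|<t_0$, uniform integrability gives $\int e^{tx}\,d\nu=M(t)$. Both sides are continuous on $(-t_0,t_0)$: the left side is finite there by Fatou's lemma and convex, hence continuous. So the mgfs of $\nu$ and $V$ agree on a nondegenerate open interval around $0$. Since an mgf finite near $0$ determines the law (it extends analytically to a strip, and that extension determines the characteristic function), $\nu=\mathcal L(V)$. Therefore $\mu_{N_m}\Rightarrow\mathcal L(V)$.

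\textbf{Moments.} Weak convergence together with the uniform integrability of $|x|^p$ gives $\int x^p\,d\mu_{N_m}\to\E V^p$ for every $p\geq1$.

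\textbf{Passing back to the full sequence.} Every subsequence thus has a further subsequence along which, almost surely,
\[
d_{\mathrm{BL},1}(\mu_N,\mathcal L(V))\to0\qquad\text{and}\qquad \E(X_N^p\mid Y_N)\to\E V^p .
\]
By the subsequence criterion both convergences hold in probability along the full sequence, which is conclusion (2). Conclusion (1) follows from the tower property: for bounded Lipschitz $\varphi$, $\E\varphi(X_N)=\E[\E(\varphi(X_N)\mid Y_N)]$, and dominated convergence gives $\E\varphi(X_N)\to\E\varphi(V)$.

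The only genuinely delicate step is identifying the limit. Convergence of mgfs is available only on the countable set $D$ and only almost surely along subsequences, so uniform integrability has to be obtained from a slightly larger exponent $t_0$ in order to pass the exponentials to the limit. Uniqueness of a law given its mgf on an interval is then invoked; I expect to cite a standard reference such as Billingsley for that fact.
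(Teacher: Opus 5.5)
The paper gives no proof of this lemma; it imports it as Lemma F.6 of \cite{mukherjee2026isinginference}. Your argument is a correct, self-contained proof. It uses the same subsequence and diagonal-extraction device the paper employs in the proof of Theorem~\ref{thm:isingWeakConv}, where the paper passes from conditional moment convergence to conditional weak convergence. The extra ingredients you need are all handled correctly:
\begin{itemize}
\item uniform integrability, obtained from the slightly larger exponent $t_0$;
\item convexity, hence continuity, of both mgfs, which extends agreement from the countable set $D$ to $(-t_0,t_0)$;
\item uniqueness of a law whose mgf is finite near $0$.
\end{itemize}
You only use the regular conditional law of the real variable $X_N$ given $\sigma(Y_N)$. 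So your proof also covers the way the lemma is actually applied, with $Y_N=A_N$ a random matrix rather than a random variable.

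Two small points of hygiene, neither of them a gap:
\begin{itemize}
\item Convergence of $\int e^{\pm t_0 x}\,d\mu_{N_m}$ to finite limits only guarantees these integrals are finite for $m\geq m_0(\omega)$. The suprema in your tightness and uniform-integrability step should therefore be taken over $m\geq m_0(\omega)$.
\item Conclusion (2) should be read as $\int x^p\,d\mu_N\to\mathbb E V^p$, with $p$ an integer or $X_N\geq 0$, as in the application $X_N=\sigma_N(f)^2$. The hypotheses guarantee neither that $X_N^p$ is real-valued for non-integer $p$ nor that $X_N^p$ is unconditionally integrable.
\end{itemize}
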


\section{A stationary Ornstein--Uhlenbeck representation of \(X(t)\)}
\label{rem:OU-representation-X}
In this section, we give an alternative dynamical representation of the process $X(t)$ in \eqref{defXt} in terms of stationary Ornstein--Uhlenbeck processes.     To begin with, note that indicator functions can be represented in $L^2[0,1]$ with respect to
the complete orthonormal basis $\{\phi_i\}_{i\geq1}$ as
$$
\mathbf 1_{[0,t]}
=
\sum_{i=1}^\infty
\left\langle \mathbf 1_{[0,t]},\phi_i\right\rangle\phi_i
=
\sum_{i=1}^\infty \Phi_i(t)\phi_i.
$$
Since the Wiener integral is linear and continuous with respect to
$L^2$-convergence, the preceding expansion may be passed through the
stochastic integral. Hence, for every fixed $t\in[0,1]$,
$$
B(t)
=
\int_0^1 \mathbf 1_{[0,t]}(x)\,\mathrm dB(x)
=
\sum_{i=1}^\infty
\Phi_i(t)\int_0^1\phi_i(x)\,\mathrm dB(x)
=
\sum_{i=1}^\infty \Phi_i(t)Z_i,
$$
where the final series converges in mean square. With the notation
$a_i:=1-\beta\lambda_i$, the representation
\eqref{defXt} may thus equivalently be written as
\begin{equation}
\label{eq:X-pure-spectral}
    X(t)
    =
    \sum_{i=1}^{\infty}
    \frac{\Phi_i(t)}{\sqrt{a_i}}\,Z_i,
    \qquad 0\leq t\leq1.
\end{equation}

Now, for each $i\geq1$, let $\{V_i(s)\}_{s\in[0,1]}$ be independent
solutions of the SDE
\begin{equation}
\label{ouprocess26}
    \mathrm dV_i(s)
    =
    -a_iV_i(s)\,\mathrm ds
    +
    \sqrt{2}\,\mathrm dB_i(s),
\end{equation}
where $\{B_i\}_{i\geq1}$ are independent standard Brownian motions.
Assume that the variables $\{V_i(0)\}_{i\geq1}$ are independent,
independent of all the Brownian motions, and satisfy
$V_i(0)\sim\mathcal N(0,a_i^{-1})$. Thus, $V_i$ is a stationary Ornstein--Uhlenbeck process with
mean-reversion rate $a_i$, long-run mean $0$, and diffusion
coefficient $\sqrt{2}$. The explicit solution of
\eqref{ouprocess26} is given by:
$$
V_i(s)
=
e^{-a_i s}V_i(0)
+
\sqrt{2}\int_0^s e^{-a_i(s-r)}\,\mathrm dB_i(r).
$$
Hence, $V_i(s)$ is centered Gaussian and
$$
\operatorname{Var}(V_i(s))
=
\frac{e^{-2a_i s}}{a_i}
+
2\int_0^s e^{-2a_i(s-r)}\,\mathrm dr
=
\frac{1}{a_i}.
$$
Moreover,
$\operatorname{Cov}(V_i(s),V_i(r))
=a_i^{-1}e^{-a_i|s-r|}$, so the processes
$\{V_i\}_{i\geq1}$ are independent and stationary. In particular,
$$
V_i(s)\sim\mathcal N\left(0,\frac{1}{a_i}\right)
\qquad\text{for every }s\in[0,1].
$$

For every fixed $s\in[0,1]$, define
$$
U_s(t)
:=
\sum_{i=1}^{\infty}\Phi_i(t)V_i(s),
\qquad 0\leq t\leq1.
$$
The series converges in mean square for every fixed $t$, since, by
Parseval's identity,
$$
\sum_{i=1}^{\infty}
\mathbb E\left[\Phi_i(t)^2V_i(s)^2\right]
=
\sum_{i=1}^{\infty}\frac{\Phi_i(t)^2}{a_i}
\leq
\frac{1}{1-\beta\|W\|_{\mathrm{op}}}
\sum_{i=1}^{\infty}\Phi_i(t)^2
=
\frac{t}{1-\beta\|W\|_{\mathrm{op}}}.
$$
Moreover, for every fixed $s$, the random variables
$\{\sqrt{a_i}V_i(s)\}_{i\geq1}$ are i.i.d.\ standard Gaussian, and
hence
$$
\bigl(V_i(s)\bigr)_{i\geq1}
\stackrel{d}{=}
\left(\frac{Z_i}{\sqrt{a_i}}\right)_{i\geq1}.
$$
Comparing with \eqref{eq:X-pure-spectral}, $U_s$ and $X$ have the
same finite-dimensional distributions. Also, the increment estimate in
Lemma~\ref{holderX} applies verbatim to $U_s$, so it admits a
continuous modification. Therefore, for every $s\in[0,1]$, we have
$$
\{U_s(t)\}_{t\in[0,1]}
\stackrel{d}{=}
\{X(t)\}_{t\in[0,1]}
\qquad\text{in }C[0,1].
$$
Thus, the limiting process $X$ may be realized in law as the weighted
sum of any fixed-time slice $\{V_i(s)\}_{i\geq1}$ of the independent
stationary Ornstein--Uhlenbeck processes $\{V_i\}_{i\geq1}$. Retaining
the auxiliary time parameter $s$ yields a two-parameter
Gaussian process
$\{U_s(t):s, t\in[0,1]\}$ that is stationary in $s$, every fixed-$s$ slice of which has
the law of $X$, while its dependence across $s$ reflects the distinct
Ornstein--Uhlenbeck relaxation rates
$a_i=1-\beta\lambda_i$ associated with the graphon eigenfunctions $\phi_i$. In particular, this Ornstein--Uhlenbeck embedding gives a
reversible coupling of a continuum of copies of $X$, allowing correlated
realizations of $X$ with explicitly controlled dependence to be simulated
simultaneously by varying their separation in the auxiliary time parameter.

\section{An alternative derivation of Theorem~\ref{thm:donsker} in the Curie--Weiss model}
\label{rem:CW-alternative-donsker}
The Curie--Weiss model, namely, the Ising model on the complete graph,
is the special case of the Erd\H{o}s--R\'enyi Ising model with
$\theta_N=1$. It follows from Theorem~\ref{thm:donsker} and
Example~\ref{example:block-limiting-process} that, for
$0<\beta<1$,
\begin{equation}
\label{CWDonsker4}
    \{S_N(t)\}_{t\in[0,1]}
    \xrightarrow{d}
    \left\{
        B(t)
        +
        \left(
            \frac{1}{\sqrt{1-\beta}}-1
        \right)tB(1)
    \right\}_{t\in[0,1]}
\end{equation}
in $D[0,1]$. We now give a direct derivation of
\eqref{CWDonsker4} using the classical auxiliary-variable
representation of the Curie--Weiss model.
Towards this, let $Y_N$ have density
\[
    f_{Y_N}(y)
    =
    \frac{\exp\{-N\psi_\beta(y)\}}
    {\displaystyle
        \int_{-\infty}^{\infty}
        \exp\{-N\psi_\beta(z)\}\,\mathrm dz},
    \qquad
    \psi_\beta(y)
    :=
    \frac{\beta y^2}{2}
    -
    \log\cosh(\beta y).
\]
Conditional on $Y_N$, let $s_1,\ldots,s_N$ be independent with
\[
    \mathbb P(s_i=x\mid Y_N)
    =
    \frac{\exp\{x\beta Y_N\}}
    {2\cosh(\beta Y_N)},
    \qquad x\in\{-1,1\}.
\]
Then
the unconditional law of $\bm s$ is the Curie--Weiss law;
see \cite[Lemma~3]{mukherjee2018global}. 

For $0<\beta<1$, the function $\psi_\beta$ has a unique global
minimum at zero. In fact,
\[
    \psi_\beta'(y)
    =
    \beta(y-\tanh(\beta y)),
    \qquad
    \psi_\beta''(0)
    =
    \beta(1-\beta).
\]
Consequently, the Laplace method yields
\[
    \sqrt N\,Y_N
    \xrightarrow{d}
    G,
    \qquad
    G\sim
    \mathcal N\left(
        0,\frac{1}{\beta(1-\beta)}
    \right).
\]
Writing
\(
    \mu_N:=\tanh(\beta Y_N),
\)
the delta method therefore gives
\begin{equation}
\label{eq:CW-latent-mean-limit}
    \sqrt N\,\mu_N
    \xrightarrow{d}
    Z,
    \qquad
    Z\sim
    \mathcal N\left(
        0,\frac{\beta}{1-\beta}
    \right).
\end{equation}

Now define the conditionally centered partial-sum process
\[
    U_N(t)
    :=
    \frac{1}{\sqrt N}
    \sum_{i=1}^{\lfloor Nt\rfloor}
    (s_i-\mu_N),
    \qquad 0\leq t\leq1.
\]
Conditional on $Y_N$, the summands are independent, centered, bounded,
and have variance
\[
    \operatorname{Var}(s_i| Y_N)
    =
    1-\mu_N^2
    \xrightarrow{P}1.
\]
The classical Donsker theorem for i.i.d. sum processes therefore gives
\[
    \mathcal L(U_N| Y_N)
    \xrightarrow{d}
    \mathcal L(B)
\]
in probability as probability measures on $D[0,1]$. Since the
conditional limit does not depend on $Y_N$, this convergence together
with \eqref{eq:CW-latent-mean-limit} implies
\[
    \left(U_N,\sqrt N\,\mu_N\right)
    \xrightarrow{d}
    (B,Z),
\]
where $B$ is a standard Brownian motion and $Z$ is independent of
$B$. Now, note that
\[
    S_N(t)
    =
    U_N(t)
    +
    \frac{\lfloor Nt\rfloor}{N}\sqrt N\,\mu_N,
\]
and
\[
    \sup_{0\leq t\leq1}
    \left|
        \frac{\lfloor Nt\rfloor}{N}\sqrt N\,\mu_N
        -
        t\sqrt N\,\mu_N
    \right|
    \leq
    \frac{|\sqrt N\,\mu_N|}{N}
    \xrightarrow{P}0.
\]
It follows that
\begin{equation}
\label{eq:CW-independent-decomposition}
    S_N
    \xrightarrow{d}
    \{B(t)+tZ\}_{t\in[0,1]}.
\end{equation}

To identify this limit with the process in \eqref{CWDonsker4}, let us write
\[
    B^\circ(t):=B(t)-tB(1)
\]
for the Brownian bridge associated with $B$. Note that $B^\circ, B(1), Z$ are independent, and
\begin{equation}\label{bridge12}
       B(t)+tZ
    =
    B^\circ(t)+t\{B(1)+Z\}
\end{equation}
where
\begin{equation}\label{bridge48}
    B(1)+Z
    \sim
    \mathcal N\left(0,\frac{1}{1-\beta}\right).
\end{equation}
Thus, if $\widetilde B$ is a standard Brownian motion, it follows from \eqref{bridge12}, \eqref{bridge48} and the independence of $\{\widetilde B(t) - t \widetilde B(1)\}_{t\in [0,1]}$ (the Brownian bridge associated with $\widetilde B$) and $\widetilde B(1)$, that
\[
    \{B(t)+tZ\}_{t\in[0,1]}
    \stackrel{d}{=}   \left\{
        \widetilde B(t) - t \widetilde B(1)
        + 
            \frac{t}{\sqrt{1-\beta}}\widetilde B(1)
    \right\}_{t\in[0,1]} =
    \left\{
        \widetilde B(t)
        +
        \left(
            \frac{1}{\sqrt{1-\beta}}-1
        \right)t\widetilde B(1)
    \right\}_{t\in[0,1]},
\]
which proves \eqref{CWDonsker4}.

\end{document}